\documentclass[12pt]{amsart}
\usepackage{hyperref}

\usepackage[utf8]{inputenc}

\usepackage{amssymb, graphicx}
\usepackage{amsfonts}
\usepackage{amsmath}
\usepackage{latexsym}
\usepackage{amscd}
\usepackage{verbatim}
\usepackage{mathrsfs}

\allowdisplaybreaks[4]

\newcommand{\mf}{\mathfrak}
\newcommand{\g}{\mf{g}}

\newcommand{\fa}{\mf{a}}

\newcommand{\gl}{\mf{gl}}

\newcommand{\p}{\mf{p}}

\addtolength{\textwidth}{3cm} \addtolength{\oddsidemargin}{-1.5cm}
\addtolength{\evensidemargin}{-1.5cm}
\allowdisplaybreaks[4]

\newcommand{\Z}{{\mathbb Z}}
\newcommand{\N}{{\mathbb N}}
\newcommand{\Q}{{\mathbb Q}}

\renewcommand{\phi}{\varphi}

\def\gl{\mathfrak{gl}}

\def\rad{\text{rad}}

 \def\ann{\rm ann}

\def\bC{\mathbb{C}}

\newcommand{\tW}{\tilde{W}}

\newtheorem{theorem}{Theorem}[section]

\newtheorem{lemma}[theorem]{Lemma}

\theoremstyle{remark}
\newtheorem{remark}[theorem]{Remark}

\numberwithin{equation}{section}
\def\Vir{\mathrm{Vir}}

\begin{document}




\title[Simple weight modules over  Witt superalgebras]{Simple weight modules with finite-dimensional weight spaces over  Witt superalgebras}
\author{Yaohui Xue, Rencai L\"{u}}
\maketitle







\begin{abstract}
Let $A_{m,n}$ be the tensor product of  the  Laurient polynomial algebra in  $m$ even variables and the exterior algebra in  $n$ odd variables over the complex field $\bC$, and the Witt superalgebra $W_{m,n}$ be the Lie superalgebra of superderivations of $A_{m,n}$. In this paper, we classify the simple weight $W_{m,n}$ modules with finite-dimensional weight spaces with respect to the standard Cartan algebra of $W_{m,0}$. Every such module is either a simple quotient of a tensor module or a module of highest weight type.\end{abstract}

\vskip 10pt \noindent {\em Keywords:}Witt superalgebra,  simple module, cuspidal module, weight module
\vskip 5pt
\noindent
{\em 2010  Math. Subj. Class.:} 17B10, 17B20, 17B65, 17B66, 17B68



\section{Introduction}

We denote by $\Z, \Z_+, \N, \Q$ and $\bC$ the sets of all integers, non-negative integers,  positive integers, rational numbers and complex numbers, respectively. All vector spaces and algebras in this paper are over $\bC$, and all modules over Lie superalgebras are $\Z_2$-graded. We denote by $U(\fa)$ the universal enveloping algebra of the Lie superalgebra $\fa$.

Let $A=A_{m,n}$ be the tensor superalgebra of  the  Laurient polynomial algebra in  $m$ even variables $t_1,t_2,\ldots,t_m$ and the exterior algebra in $n$ odd variables $\xi_1,\xi_2,\ldots, \xi_n$, and the Witt superalgebra $W=W_{m,n}$ be the Lie superalgebra of  superderivations of $A_{m,n}$. Denote by
$D_{m}={\rm span}\{ t_i\frac{\partial}{\partial t_i}|\,i=1,2,\ldots,m\}$ the Cartan subalgebra of $W_{m,0}$. Throughout this paper, a $W_{m,n}$ module $M$ is called a weight module if the action of $D_m$ on $M$ is diagonalizable.

Representation theory of Witt algebra $W_{m,0}$ has been well-developed.  Simple weight modules with finite-dimensional weight spaces (also called Harish-Chandra modules) for the Virasoro algebra (which is the universal central extension of $W_{1,0}$) were conjectured by V. Kac  in \cite{K2} and  classified by O.Mathieu in \cite{Ma}, see also \cite{Su2} for another approach.  Then similar classification was given for  the higher rank Virasoro algebras in \cite{Su1,LZ2}.  In 2004 Eswara Rao conjectured in \cite{E2} that a simple weight module for $W_{m.0}$ with finite-dimensional weight spaces is either a quotient of a tensor module or a module of highest weight type. The weight set of those modules for $W_{m,0}$ was given in \cite{MZ}. Finally, Y. Billig and V. Futorny completed the classification for $W_{m,0}$ in \cite{BF1}. The $A_{m,0}$-cover method developed in \cite{BF1} turns out to be extremely useful. For more related results, we refer the readers to \cite{BF2, E1, E2, Sh} and the references therein.

The finite-dimensional simple $W_{0,n}$ modules were classified in \cite{BL}. Very recently, the simple weight modules with finite-dimensional weight spaces over the $N=2$ Ramond algebra (which is a central extension of $W_{1,1}$)  were classified in \cite{Liu1}.

Hence it is  natural to consider the classification of simple weight modules over $W_{m,n}$ which have finite-dimensional weight spaces.

A  $W_{m,n}$ weight module is called {\it cuspidal} or {\it uniformly bounded} if  the dimensions of its weight spaces  are uniformly bounded by some constant.

This paper is arranged as follows. In Section 2, we collect some notations and results for later use. In Section 3, we classify simple cuspidal $W_{m,n}$ modules by extending the methods and results by  E. Rao, Y. Billig, V. Futorny for the Lie algebras $W_{m,0}$ to that for the Lie superalgebras $W_{m,n}$, see Theorem \ref{the3.11}. This is the main part of this paper, and the ideas in this section are used to solve the classification of simple cuspidal modules for the Lie algebra of vector fields on $\bC^n$, see \cite{LX}. In Section 4, we classify simple cuspidal modules over the extended Witt superalgebra by using the results in Section 3, see Theorem \ref{the4.4}. In Section 5, we classify simple weight $W_{m,n}$ modules with finite-dimensional weight spaces after  proving several auxiliary lemmas as those in \cite{MZ} for modules that are not cuspidal.  Every such module is either a quotient of a tensor module or a  module of highest weight type, see Theorem \ref{main}.

\section{Preliminaries}

In this section, we collect some basic definitions and results for our study.

 A vector superspace $V$ is a vector space endowed with a $\Z_2$-gradation $V=V_{\bar 0}\oplus V_{\bar 1}$. The parity of a homogeneous element $v\in V_{\bar{i}}$ is denoted by $|v|=\bar{i}\in \Z_2$. Throughout this paper, when we write $|v|$ for an element $v\in V$, we will always assume that $v$ is a homogeneous element.  Denote by $|I|$ the number of elements in the finite set $I$.

Denote by $d_i=t_i\frac{\partial}{\partial t_i},\forall i=1,2,\ldots,m$. Let $$\Delta=\Delta_{m,n}={\rm span}\{d_i,\frac{\partial}{\partial \xi_j}\ |\ i=1,\ldots,m;j=1,\ldots,n\}.$$


Let $e_1,\dots,e_m$ be the standard basis of $\Z^m$.

 For convenience, we will omit $\otimes$  in $A_{m,n}$, and write $t^{\alpha}:=t_1^{\alpha_1}t_2^{\alpha_2}\cdots t_m^{\alpha_m}$, $\xi_{i_1,i_2,\ldots,i_k}:=\xi_{i_1}\xi_{i_2}\cdots \xi_{i_k},\forall\, \alpha=(\alpha_1,\ldots,\alpha_m)\in \Z^m, i_1,i_2,\ldots,i_k\in \{1,2,\ldots,n\}$.

 For any subset $I=\{i_1,\ldots,i_k\}\subseteq \{1,2,\ldots,n\}$,  we denote $\underline{I}=(l_1,l_2,\ldots,l_k)$ if $\{i_1,i_2,\ldots,i_k\}=\{l_1,l_2,\ldots,l_k\}$ and $l_1<\dots<l_k$. Denote $\xi_{I}:=\xi_{l_1,\ldots,l_k}$. We set $\xi_{\varnothing}=1$.

 Then $W_{m,n}=A_{m,n}\Delta$ has a standard basis $$\{t^{\alpha}\xi_Id_i, t^{\alpha}\xi_I\frac{\partial}{\partial \xi_j}\ |\ i=1,2,\ldots,m;j=1,2,\ldots,n; \alpha\in\Z^m; I\subseteq \{1,2,\ldots,n\}\}.$$

  We will also need the extended Witt superalgebra $\tW_{m,n}=W_{m,n}\ltimes A_{m,n}$ with the brackets
$$[a,a']=0,\ [x,a]=-(-1)^{|x||a|}[a,x]=x(a),\ \forall a,a'\in A_{m,n},\ x\in W_{m,n}.$$

Let $\g$ be any of $\tW_{m,n}$, $W_{m,n}$ or any Lie supersubalgebra of $\tW_{m,n}$ that contains $D_m$. A $\g$ module $M$ is called a {\it weight} module provided that the action of $D_m$ on $M$ is diagonalizable. Let $M$ be a weight $\g$ module. Then
\begin{equation}M=\oplus_{\lambda\in \bC^m} M_{\lambda},\end{equation} where $M_{\lambda}=\{v\in M| d_i v=\lambda_i v,i=1,2,\ldots,m\}$.  $M_{\lambda}$ is called a weight space corresponding to the weight $\lambda$. The support ${\rm Supp}(M)$ of $M$ is defined as the set of all weights $\lambda$ for which $M_{\lambda}\ne 0$. In particular $\tW_{m,n}$ as a weight module over itself has support $\Z^m$. Therefore,  for any indecomposable weight $\g$ module we have  ${\rm Supp}(M)\subseteq \lambda+\Z^m$ for some $\lambda\in\bC^m$. A weight $\g$ module $M$ is called  {\it cuspidal} or {\it uniformly bounded} provided that there exists a positive integer $N$ such that $\dim V_{\lambda}<N$ for all $\lambda\in {\rm Supp}(V)$. For any subset $S$ of $\bC^m$,  denote $M_{S}=\oplus_{\lambda\in S} M_S$. In particular, $\g_G$ is a Lie supersubalgebra of $\g$ if $G$ is a semi-subgroup of the addtive group $\bC^m$.

For any $\mu\in \bC^m$, denote $d_{\mu}=\mu_1d_1+\cdots+\mu_md_m$. $\mu$ is called {\it generic} if $\mu_1,\mu_2,\ldots,\mu_m$ are  linearly independent over $\Q$. For a given generic $\mu$, we denote by $\Vir[\mu]=A_{m,0}(\mu_1d_1+\ldots+\mu_m d_m)$ the solenoidal Lie algebra (also called the centerless higher rank Virasoro algebra). A $\Vir[\mu]$ module $M$ is called a weight module if  the action of $d_{\mu}$ on $M$ is diagonalizable.

Let $\sigma:L\rightarrow L'$ be any homomorphism of Lie superalgebras or associative superalgebras, and $M$ be any $L'$ module. We make $M$ into an $L$ module by $x\cdot v=\sigma(x) v,\forall x\in L, v\in M$. The resulting module will be denoted by $M^{\sigma}$.  Denote by $T$ the automorphism of $L$ defined by $T(x)=(-1)^{|x|}x,\forall x\in L$.  For any $L$ module $M$, we can make it into a new module  $\Pi(M)$ by a parity-change of $M$.

Let $B$ be any associative superalgebra. A  $B$ module $M$ is called {\it strictly simple} if  it is a simple module over the associative algebra $B$ (forgetting the $\Z_2$-gradation), i.e., $M$ has no $B$ invariant  subspaces (not necessarily $\Z_2$-graded) except $0$ and $M$.

We will need the following two results on tensor modules over tensor superalgebras.

\begin{lemma}\label{lem2.1}Let $B, B'$ be unital associative superalgebras, and $M,M'$ be $B,B'$ modules, respectively. Then
$M\otimes {M'}\cong \Pi(M)\otimes \Pi({M'}^{T})$  as $B\otimes B'$ modules.

\end{lemma}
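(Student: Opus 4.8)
The plan is to unwind what the tensor product of a $B$-module and a $B'$-module means as a module over the tensor superalgebra $B \otimes B'$, and to track the signs that the Koszul rule forces on us. Recall that for superalgebras the multiplication on $B \otimes B'$ is $(b_1 \otimes b_1')(b_2 \otimes b_2') = (-1)^{|b_1'||b_2|}(b_1 b_2)\otimes(b_1' b_2')$, and the action on $M \otimes M'$ is $(b \otimes b')(v \otimes v') = (-1)^{|b'||v|}(bv)\otimes(b'v')$. So the claim is really a bookkeeping identity: the obvious linear isomorphism $M \otimes M' \to \Pi(M) \otimes \Pi(M'^{T})$ sending $v \otimes v' \mapsto v \otimes v'$ (it is the identity on the underlying space, but the gradings of the target factors are shifted) intertwines the two $B \otimes B'$-actions.

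First I would fix notation for the parity shift: writing $\bar v$ for an element $v$ regarded in $\Pi(M)$, we have $|\bar v| = |v| + \bar 1$; and on $M'^{T}$ the element $b'$ acts by $(-1)^{|b'|}b'$, while the parity is unchanged, so in $\Pi(M'^{T})$ we again have $|\bar{v'}| = |v'| + \bar 1$ but $b'$ acts by $T(b') = (-1)^{|b'|}b'$. Next I would compute the action of $b \otimes b'$ on $v \otimes v'$ in the source: it is $(-1)^{|b'||v|}(bv)\otimes(b'v')$. Then I would compute the action of the same element on $\bar v \otimes \bar{v'}$ in the target, using the target's action formula together with the shifted parity $|\bar v| = |v|+\bar 1$ and the twisted $B'$-action $T(b')$: this gives $(-1)^{|b'|(|v|+\bar 1)}(b\bar v)\otimes(T(b')\bar{v'}) = (-1)^{|b'||v|}(-1)^{|b'|}(-1)^{|b'|}\,\overline{bv}\otimes\overline{b'v'} = (-1)^{|b'||v|}\,\overline{bv}\otimes\overline{b'v'}$. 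The two factors of $(-1)^{|b'|}$ — one from the parity shift on the first tensorand, one from the $T$-twist on $B'$ — cancel, and we are left with exactly the sign $(-1)^{|b'||v|}$ appearing in the source action. Hence the identity map is an isomorphism of $B \otimes B'$-modules.

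I do not expect any serious obstacle here; the only thing to be careful about is that the two parity shifts (on $M$ and, via $\Pi$, on $M'^{T}$) together with the $T$-twist conspire so that the relevant signs cancel rather than doubling up — this is precisely why one shifts \emph{both} factors and twists the action on the second. I would also remark that the statement and proof do not use unitality of $B, B'$ in any essential way beyond what is needed to speak of modules; the hypothesis is there only to match the intended application. One could alternatively phrase the whole argument via the standard symmetry isomorphism of the monoidal category of supermodules, but the direct sign computation above is the most transparent.
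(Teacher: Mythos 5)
Your proof is correct and is essentially identical to the paper's: both verify directly that the identity map on the underlying space intertwines the two $B\otimes B'$-actions, with the extra sign $(-1)^{|b'|}$ from the parity shift on the first factor cancelling the $(-1)^{|b'|}$ from the $T$-twist on the second. No further comment is needed.
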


\begin{proof}  It follows directly from $(b\otimes b')(v\otimes v')=(-1)^{|v||b'|} bv\otimes b'v'\in M\otimes M'$ and $(b\otimes b')(v\otimes v')=(-1)^{|b'|(|v|-1)}(bv)\otimes ((-1)^{|b'|}b'v')=(-1)^{|v||b'|} bv\otimes {b'v'}\in \Pi(M)\otimes \Pi({M'}^{ T})$.  \end{proof}

\begin{lemma}\label{lem2.2}Let $B, B'$ be unital associative superalgebras such that $B'$ has a countable basis, $R=B\otimes B'$,
$M'$ be a strictly simple $B'$ module and $M$ be  a $B$ module. Then
 \begin{itemize}
\item[(1).] Any $R$ submodule of  $M\otimes M'$ is of the form $N\otimes M'$ for some $B$ submodule $N$ of $M$;

\item[(2).] Any simple quotient of the $R$ module $M\otimes M'$ is isomorphic to some $K\otimes M'$ for some simple quotient $K$ of $M$;

 \item[(3).] $M\otimes M'$ is a simple $R$ module if and only if $M$ is a simple $B$ module;

\item[(4).] Suppose that $V$ is a simple $R$ module and $V$ contains a  $B'=\bC\otimes B'$  submodule $M'$ that is strictly simple . Then $V\cong M\otimes M'$ for some simple $B$ module $M$.
\end{itemize}
\end{lemma}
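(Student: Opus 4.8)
The plan is to prove the four statements in tandem, since (2), (3), (4) all follow once (1) is established together with a little bookkeeping about strict simplicity. First I would fix a $\bC$-basis $\{b'_j\}_{j\in J}$ of $B'$ with $J$ countable, using the hypothesis on $B'$. The key structural input is that $M'$ is \emph{strictly simple}, so for any fixed nonzero $w\in M'$ the map $B'\to M'$, $b'\mapsto b'w$, is surjective as a map of plain vector spaces; concretely, every element of $M'$ is a (finite) linear combination $\sum_j c_j b'_j w$.

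For (1), let $N'$ be an $R=B\otimes B'$ submodule of $M\otimes M'$. I would show $N'=N\otimes M'$ where $N:=\{u\in M\mid u\otimes w\in N' \text{ for all } w\in M'\}$ — or, more usefully, define $N$ as the span of all "first coordinates" appearing in elements of $N'$ and then argue the two descriptions coincide. The crucial step: take $0\ne x\in N'$ and write it, using the basis of $B'$, in the form $x=\sum_{j\in F} u_j\otimes b'_j w_0$ for a \emph{finite} $F\subseteq J$, some fixed $w_0$, and $u_j\in M$ linearly independent (after pruning). Acting by $1\otimes b'$ for varying $b'$, and using that $(1\otimes b')(u\otimes v)=\pm\, u\otimes b'v$ with a sign depending only on $|b'|$ and $|u|$ (so on homogeneous components the sign is constant), one gets $\sum_j u_j\otimes b' b'_j w_0 \in N'$ for all $b'$. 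A Jacobson-density / Vandermonde-type separation argument — choosing $b'$ so that the vectors $b' b'_j w_0\in M'$ are "independent enough" — lets me isolate each $u_j\otimes M'\subseteq N'$. This is the step I expect to be the main obstacle: making the separation rigorous in the $\Z_2$-graded, possibly infinite-dimensional setting, where one cannot simply invoke finite-dimensional density. The clean way is: the associative algebra $B'$ acts on $M'$ with $\mathrm{End}$ only scalars when restricted appropriately; more precisely, because $M'$ is strictly simple, $B'\to \mathrm{End}_{\bC}(M')$ has dense image (Jacobson density), so for finitely many linearly independent $\phi_j$-type data one can find $b'$ realizing any prescribed finite configuration, which yields the separation. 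Once $u_j\otimes M'\subseteq N'$ for each $j$, acting by $B\otimes 1$ shows $(Bu_j)\otimes M'\subseteq N'$, and taking $N$ to be the $B$-submodule generated by all such $u_j$ over all $x\in N'$ gives $N\otimes M'\subseteq N'$; the reverse inclusion is immediate from the definition of $N$.

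Given (1), the rest is quick. For (3): if $M$ is simple then the only submodules $N\otimes M'$ are $0$ and $M\otimes M'$, so $M\otimes M'$ is simple; conversely a proper $B$-submodule $N\subsetneq M$ gives the proper $R$-submodule $N\otimes M'$. For (2): a simple quotient $M\otimes M'\twoheadrightarrow Q$ has kernel $N\otimes M'$ by (1), so $Q\cong (M/N)\otimes M'$, and by (3) applied to $M/N$ this forces $M/N$ simple; set $K=M/N$. For (4): let $V$ be simple and contain a strictly simple $B'=\bC\otimes B'$ submodule $M'$. Consider the $R$-module $M\otimes M'$ where $M:=\mathrm{Hom}_{B'}(M',V)$ is the multiplicity space — equivalently, use that $V$ is a quotient of $(\mathrm{Res}^{R}_{B}V)\otimes M'$ via the evaluation/coaction map, or build the $B$-action on a suitable space $M$ so that $M\otimes M'\to V$, $u\otimes v\mapsto u(v)$ (or the natural action map) is a nonzero $R$-homomorphism. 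Its image is all of $V$ by simplicity of $V$, so $V$ is a simple quotient of $M\otimes M'$, and (2) gives $V\cong K\otimes M'$ for some simple $B$-module $K$; rename $K$ as $M$. Throughout I would keep Lemma~\ref{lem2.1} in mind in case a parity-twist $\Pi$ or ${}^{T}$ needs to be inserted to match conventions, but it does not affect the submodule lattice, so the statements are unchanged.

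One more remark on where care is needed: in (1) the set $N$ must be shown to be $\Z_2$-graded (so that it is genuinely a $B$-submodule in the super sense), but this is automatic because $N'$ is $\Z_2$-graded and the construction of $N$ respects the grading — splitting $x\in N'$ into its even and odd parts and running the argument on each. Likewise, strict simplicity of $M'$ (not just simplicity in the graded sense) is exactly what makes the density argument go through without a parity obstruction; if one only assumed graded simplicity, $\mathrm{End}_{B'}(M')$ could be $\bC[\epsilon]/(\epsilon^2)$-like and the separation could fail. So the hypothesis is used essentially in that one place.
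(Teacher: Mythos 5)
Your proposal follows essentially the same route as the paper: Jacobson density applied to the strictly simple module $M'$ to isolate the summands of an element of a submodule, yielding (1), with (2)--(4) as formal consequences (the paper realizes (4) via the induced module $\mathrm{Ind}_{B'}^{B\otimes B'}M'\cong B\otimes M'$ and then invokes (2), which is one of the options you sketch). One correction to the key step in (1): for the density argument to work you must arrange the \emph{second} tensor factors (the vectors in $M'$) to be linearly independent, not the $u_j\in M$ as you wrote, since density lets you prescribe $b'$ only on a linearly independent family in $M'$ (and note that the countable basis of $B'$ is exactly what forces $\mathrm{End}_{B'}(M')=\bC$, so that density is over $\bC$); with that adjustment your argument coincides with the paper's.
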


\begin{proof}  Let $F$ be any $R$ submodule of  $M\otimes M'$. Then for any nonzero homogeneous  vector $v\in F$, we may write $v=\sum_{i=1}^k {w_i}\otimes w_i'$ with $w_1,\ldots, w_k$ and $w_1',\ldots,w_k'$ being homogeneous elements respectively, and $w_1',\ldots,w_k'$ being linearly independent.  Since $B'$ has a countable basis and $M'$ is a strictly simple $B'$ modules, from the Theorem of Density, there exists a $b'\in B'$ such that $b'w_i'=\delta_{i,1}w_i', i=1,2,\ldots,k.$ Moreover, since $w_i'$ are homogeneous, we may assume that $b'$ is homogeneous. Therefore, $b'v=b'(\sum_{i=1}^k w_i\otimes w_i')=(-1)^{|w_1||b'|} w_1\otimes w_1' \in F$. So $(\bC\otimes B')(w_1\otimes w_1')=\bC w_1\otimes B'w_1'=\bC w_1\otimes M'\subseteq F$. Similarly, we have $\bC w_i\otimes M'\subseteq F$. Now we have proved  $F=K\otimes M'$, where $K=\{w\in M\ |\ w\otimes M'\subseteq F\}$. It is clear that $K$ is a $B$ submodule of $M$ and we have (1).

(2) and (3) follow easily from (1).  Suppose that $V$ is a simple $R$ module and $V$ contains a  $\bC\otimes B'$  submodule $M'$ that is strictly simple. Then $V$ is a simple quotient of ${\rm Ind}_{B'}^{B\otimes B'} M'\cong B\otimes M'$. Now (4) follows from (2).
\end{proof}
\begin{remark} We do not have the results in the lemma if $M'$ is a simple instead of strictly simple $B'$ module. See for example  \cite[Section 3.1]{CW}. \end{remark}

Let us briefly introduce exp-polynomial Lie superalgebras and exp-polynomial modules as in \cite{BZ}.

A Lie superalgebra $L$  is called $\Z^m$-graded if $L=\oplus_{\alpha\in \Z^m} L_{\alpha}$ as superspaces and $[L_{\alpha,\bar{i}},L_{\alpha',\bar{i'}}]\subseteq L_{\alpha+\alpha',\bar{i}+\bar{i}'},\forall \alpha,\alpha'\in \Z^m,\bar{i},\bar{i}'\in \Z_2$.  Let $K=K_{\bar{0}}\sqcup K_{\bar{1}}$ be an index set. Then $L$ is said to be a $\Z^m$-graded {\it exp-polynomial Lie superalgebra} if $L$ has a spanning set $\{g_{k}(\alpha)|k\in K, \alpha\in \Z^m\}$ with $g_{k}(\alpha)\in  L_{\alpha,\bar{i}},\forall k\in K_{\bar{i}} $, and there exists a family of exp-polynomial  functions $\{f_{k,k'}^s(\alpha,\alpha')|k,k',s\in K\}$ in $2m$ variables $\alpha_i,\alpha_i'$ and where for each $k,k'$ the set $\{s|f_{k,k'}^s(\alpha,\alpha')\ne 0\}$ is  finite, such that $[g_{k}(\alpha),g_{k'}(\alpha')]=\sum_{s\in K}f_{k,,k'}^s(\alpha,\alpha')g_{s}(\alpha+\alpha'), \forall k,k'\in K,\alpha,\alpha'\in \Z^m.$ An $L$ module $V$ is call $\Z^m$-graded  if $V=\oplus_{\alpha\in \Z^m} V_{\alpha}$ as superspaces and $L_{\alpha,\bar{i}}V_{\beta,\bar{j}}\subseteq V_{\alpha+\beta,\bar{i+j}},\forall \bar{i,}\bar{j}\in \Z_2, \alpha,\beta\in \Z^m$. Let $J=J_{\bar{0}}\sqcup J_{\bar{1}}$ be a finite index set. Then a $\Z^m$-graded $L$ module $V$ is called {\it $\Z^m$-graded exp-polynomial modules} if  $V$ has a spanning set $\{v_j(\alpha)|j\in J, \alpha \in \Z^m\}$ with $v_j(\alpha)\in V_{\alpha,\bar{i}},\forall j\in J_{\bar{i}}$, and there exists a family of exp-polynomial functions $h_{k,j}^{j'}(\alpha,\beta)$ for $k\in K, j,j'\in J$ such that $g_{k}(\alpha)v_{j}(\beta)=\sum_{j'\in J}h_{k,j}^{j'}(\alpha,\beta)v_{j'}(\alpha+\beta)$, where for each $k,j$ the set $\{j'|h_{k,j}^{j'}(\alpha,\beta)\ne 0\}$ is finite. A $\Z^m$-graded exp-polynomial Lie superalgebra $L$ is called {\it $\Z^m$-extragraded} if $L$ has another $\Z$-gradation \begin{equation}\label{extragrading}L=\oplus_{s\in\Z} L^{(s)}\end{equation} and the set $K$ is a disjoint union of finite subsets $K_s$ such that $\{g_{k}(\alpha)|k\in K_{s}, \alpha\in \Z^m\}$ spans the vector superspace $L^{(s)}$ for each $s\in \Z$.

Assume that $V=\oplus_{\alpha\in \Z^m} V_{\alpha}$ is a $\Z^m$-graded  exp-polynomial $L^{(0)}$ module. we can define the action of $L^+=\oplus_{i\in \N} L^{(i)}$ on $V$ by $L^+V=0$ and then consider the induced module $\tilde{M}(V)={\rm Ind}_{L^{(0)}+L^+}^L V\cong U(L^{-})\otimes V$. It is clear that $\tilde{M}(V)$ is a $\Z^{m+1}$-graded module over $L$. And $\tilde{M}(V)$ has a unique maximal proper $\Z^{m+1}$-graded submodule $\tilde{M}^{rad}$ which intersects trivialy with $V$. Let $M(V)=\tilde{M}(V)/\tilde{M}^{\rad}$.

\begin{lemma}\label{exp}\cite[Theorem 1.5]{BZ} Let $L$ be a $\Z^m$-extragraded Lie superalgebra with grading (\ref{extragrading}),  and $V$ be a $\Z^m$-graded exp-polynomial $L^{(0)}$ module. Then the $\Z^{m+1}$-graded $L$ module $M(V)$ has finite-dimensional $\Z^{m+1}$-graded spaces.\end{lemma}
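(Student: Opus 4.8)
\emph{The statement is \cite[Theorem 1.5]{BZ}, so the plan is simply to invoke that result; for completeness I describe how I would reconstruct its proof.} First I would set $L^-=\oplus_{i<0}L^{(i)}$, so that $\tilde M(V)\cong U(L^-)\otimes V$ as $\Z^{m+1}$-graded spaces, the last coordinate being the extragrading, with $V$ placed in extra-degree $0$ and $U(L^-)$ in non-positive extra-degrees. Since the extra-degree-$0$ part of $M(V)$ equals $V$ and $\dim V_\alpha\le|J|<\infty$ for all $\alpha$, it suffices to fix $\alpha\in\Z^m$ and $N\in\N$ and bound $\dim M(V)_{(\alpha,-N)}$. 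By the PBW theorem $\tilde M(V)_{(\alpha,-N)}$ is spanned by the vectors $g_{k_1}(\beta_1)\cdots g_{k_r}(\beta_r)\,v_j(\gamma)$ with $g_{k_i}\in L^{(s_i)}$, $s_i\le-1$, $j\in J$, $\sum_is_i=-N$ and $\sum_i\beta_i+\gamma=\alpha$; in particular the length satisfies $r\le N$. Since $J$ is finite and $\gamma$ is determined by $\alpha$ and the $\beta_i$, the whole problem reduces to showing that in $M(V)$ each such vector can be rewritten using only tuples $(\beta_1,\dots,\beta_r)$ from a finite subset of $(\Z^m)^r$ whose size depends only on $N$ and the fixed exp-polynomial data, not on $\alpha$.

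The relations that accomplish this reduction come from the maximality of $\tilde M^{\rad}$, and this is the step that genuinely uses the hypotheses. I would organize the negative generators into formal series $g_k(z)=\sum_{\beta\in\Z^m}g_k(\beta)z^{-\beta}$ and exploit the exp-polynomial brackets $[g_k(\delta),g_{k'}(\delta')]=\sum_sf^s_{k,k'}(\delta,\delta')g_s(\delta+\delta')$, together with $L^+V=0$ in $\tilde M(V)$ and the exp-polynomial action $g_k(\delta)v_j(\gamma)=\sum_{j'}h^{j'}_{k,j}(\delta,\gamma)v_{j'}(\delta+\gamma)$ of $L^{(0)}$ on $V$, to show that suitable ``non-local'' combinations of the products $g_{k_1}(z_1)\cdots g_{k_r}(z_r)\,v$ generate $\Z^{m+1}$-graded submodules of $\tilde M(V)$ meeting $V$ trivially, and hence lie in $\tilde M^{\rad}$. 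Reading these off coordinatewise yields, for each fixed datum $(k_1,\dots,k_r,j)$, linear relations in $M(V)$ among the vectors $g_{k_1}(\beta_1)\cdots g_{k_r}(\beta_r)v_j(\gamma)$ (with $\gamma=\alpha-\sum_i\beta_i$), valid for $(\beta_1,\dots,\beta_r)$ ranging over all translates of a fixed finite configuration, with coefficients that are exp-polynomial in the $\beta_i$; equivalently, the family $(\beta_1,\dots,\beta_r)\mapsto g_{k_1}(\beta_1)\cdots g_{k_r}(\beta_r)v_j(\gamma)$ satisfies, modulo shorter monomials, a system of exp-polynomial difference equations.

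Finiteness would then follow from the elementary fact that for any exp-polynomial $f\colon\Z^m\to\bC$ the $\bC$-span of all its shifts $\{f(\cdot+\delta)\mid\delta\in\Z^m\}$ is finite-dimensional, being contained in the span of the finitely many exp-polynomials with the same frequencies and polynomial degrees at most $\deg f$. I would finish with an induction on $N$: the base case $N=0$ is $\dim V_\alpha\le|J|$, and in the inductive step the shorter-monomial contributions are covered by earlier cases while the difference equations above confine the top-length part to a box, so that $M(V)_{(\alpha,-N)}$ is spanned by the finitely many monomials attached to the boxed tuples and is therefore finite-dimensional (indeed with a bound uniform in $\alpha$).

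The hard part is the middle step: extracting from $\tilde M^{\rad}$ a sufficiently rich family of relations, in genuinely exp-polynomial form, to confine the supports of weight vectors to bounded boxes. This is Billig's generating-function (``locality'') technique; making the bookkeeping of the iterated brackets precise, verifying that the relations produced really belong to $\tilde M^{\rad}$, and checking that the resulting bounds are uniform in $\alpha$ is the technical heart of \cite[Theorem 1.5]{BZ}.
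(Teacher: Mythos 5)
Your proposal takes exactly the same route as the paper: the paper's entire proof is the one-line remark that ``the proof is similar to that of Theorem 1.5 in \cite{BZ}'', i.e.\ it simply defers to Billig--Zhao, which is also your stated plan. Your accompanying sketch of the Billig--Zhao argument (PBW spanning, exp-polynomial relations extracted from $\tilde M^{\rad}$, finite-dimensionality of the span of shifts of an exp-polynomial, induction on the extra-degree) is a faithful outline of that proof and correctly identifies where the hypotheses are used.
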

\begin{proof}The proof is similar to that of Theorem 1.5 in \cite{BZ}. \end{proof}

\section{Cuspidal modules}



In this section, we will classify simple cuspidal $W_{m,n}$ modules. Let us fix a $ (m,n)\in  \Z_+^2\backslash \{(0,0)\}$. Denote $A=A_{m,n}, W=W_{m,n}, \tW=\tW_{m,n}$, and $\Delta=\Delta_{m,n}$ for short.

A $\tW$ module $M$ is called an $AW$ module if the action of $A$ on $M$ is associative, i.e.,
$$a\cdot a'\cdot v=(aa')\cdot v, t^0\cdot  v=v,\, \forall a,a'\in A,\ v\in M.$$

 Denote by  $\tau(i_1,\ldots,i_k)$  the inverse order of the sequence $i_1,\ldots,i_k$, and $\tau(I,J):=\tau(\underline{I},\underline{J})=\tau(k_1,\dots,k_s,l_1,\dots,l_r)$ when $I\cap J=\varnothing$, where $\underline{I}=(k_1,\dots,k_s),\underline{J}=(l_1,\dots,l_r)$. We set $\tau(\varnothing,\varnothing)=\tau(\varnothing)=0$. Denote $\xi_{I,J}=\xi_I\xi_J$. Then $\xi_{I\cup J}=(-1)^{\tau(I,J)}\xi_I\xi_J$ for all $I\cap J=\varnothing$.

 Let  $\mathcal{J}$ be the left ideal of $U(\tW)$ generated by $\{t^0-1,t^{\alpha}\xi_I\cdot t^{\beta}\xi_J-t^{\alpha+\beta}\xi_I\xi_J|\alpha,\beta\in \Z^m, I,J\subseteq \{1,2,\ldots,n\} \}$. Then it is easy to see that $\mathcal{J}$ is in fact an ideal of $U(\tW)$. Now we have the quotient algebra $\bar{U}_{m,n}=U(\tW)/{\mathcal{J}}=(U(A) U(W))/\mathcal{J}$. From PBW Theorem,  we may identify $A$, $W$ with their images in $\bar{U}=\bar{U}_{m,n}$. Thus $\bar{U}=A\cdot U(W)$.   And denote by $K_{m,n}$ the associative supersubalgebra of $\bar{U}$ generated by $A$ and $\Delta$, which is the Weyl superalgebra, see \cite{SZZ}.

$A\cdot W$ is a Lie supersubalgebra of $\bar{U}$ with a basis $${\{}t^{\alpha}\xi_I\cdot t^{\beta}\xi_Jd_i,t^{\alpha}\xi_I\cdot t^{\beta}\xi_J\frac{\partial}{\partial \xi_j}| i=1,2,\ldots,m;j=1,2,\ldots,n;\alpha,\beta\in \Z^m;I,J\subseteq \{1,2,\ldots,n\}{\}}$$ and the brackets
\begin{equation*}[f\cdot b, g\cdot d]=f[b,g]\cdot d-(-1)^{|f\cdot b||g\cdot d|}g[d,f]\cdot b+(-1)^{|b||g|}fg\cdot [b,d],\end{equation*} $\forall f,g\in A,b,d\in W$.

 Let
\begin{eqnarray*}
&X_{\alpha,i}=t^{-\alpha}\cdot t^{\alpha}d_i-d_i,Y_{\alpha,j}=t^{-\alpha}\cdot t^{\alpha}\frac{\partial}{\partial\xi_j}-\frac{\partial}{\partial\xi_j},\\
&X'_{\alpha,I,i}=\sum_{J\subseteq I}(-1)^{\tau(J,I\setminus J)+|J|}t^{-\alpha}\xi_{J}\cdot t^{\alpha}\xi_{I\setminus J} d_i,\\
&Y'_{\alpha,I,j}=\sum_{J\subseteq I}(-1)^{\tau(J,I\setminus J)+|J|}t^{-\alpha}\xi_{J}\cdot t^{\alpha}\xi_{I\setminus J} \frac{\partial}{\partial\xi_j},\\ & \forall  \alpha\in\Z^m,\ i\in\{1,\dots,\ m\},\ j\in\{1,\dots,\ n\},  I\subseteq \{1,2,\ldots,n\}.
\end{eqnarray*}

Let $\mathcal{T}=\mathcal{T}_{m,n}$ be the supersubspace of $\bar{U}$ with a basis $\mathcal{B}$ consisting of $X_{\alpha,i},Y_{\alpha,j},X'_{\beta,I,i},Y'_{\beta,I,j}$, where $\alpha\in \Z^m\setminus\{0\}, \beta\in\Z^m, i\in\{1,\dots,\ m\},\ j\in\{1,\dots,\ n\}, \varnothing\ne I\subseteq \{1,2,\ldots,n\}$.
\begin{lemma} \label{lem3.1}Let $\underline{I}=(l_1,\ldots,l_k)$ and $s\in \{1,\ldots,k\}$. We have
\begin{itemize}
\item[(1).] $t^{-\beta}\cdot t^{\beta}\xi_I d_i=\sum_{J\subseteq I}(-1)^{\tau(J,I\setminus J) }\xi_{J}\cdot  X'_{\beta,I\setminus J,i}$;

\item[(2).]$t^{-\beta}\cdot t^{\beta}\xi_I\frac{\partial}{\partial\xi_j}=\sum_{J\subseteq I}(-1)^{\tau(J,I\setminus J)} \xi_{J}\cdot  Y'_{\beta,I
\setminus J,j}$;

\item[(3).]$\sum_{J\subseteq I}(-1)^{\tau(J,I\setminus J)+|J|}t^{-\beta}\frac{\partial \xi_{J}}{\partial \xi_{l_s}}\cdot t^{\beta}\xi_{I\setminus J} d_i=(-1)^{s}X'_{\beta,I\setminus\{l_s\},i}$;

\item[(4).]$\sum_{J\subseteq I}(-1)^{\tau(J,I\setminus J)+|J|}t^{-\beta}\frac{\partial \xi_{J}}{\partial \xi_{l_s}}\cdot t^{\beta}\xi_{I\setminus J}\frac{\partial}{\partial \xi_{j}}=(-1)^{s}Y'_{\beta,I\setminus\{l_s\},j}$;

\item[(5).]$\sum_{J\subseteq I}(-1)^{\tau(J,I\setminus J)}t^{-\beta}\xi_{J}\cdot t^{\beta}\frac{\partial \xi_{I\setminus J}}{\partial \xi_{l_s}}d_i=(-1)^{s-1} X'_{\beta,I\setminus\{l_s\},i}$;

\item[(6).]$\sum_{J\subseteq I}(-1)^{\tau(J,I\setminus J)}t^{-\beta}\xi_{J}\cdot t^{\beta}\frac{\partial \xi_{I\setminus J}}{\partial \xi_{l_s}} \frac{\partial}{\partial \xi_{j}}=(-1)^{s-1} Y'_{\beta,I\setminus\{l_s\},j}$.
\end{itemize}
\end{lemma}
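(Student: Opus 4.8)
The plan is to establish all six identities by direct computation, exploiting the fact that they are essentially "bookkeeping" identities about the exterior algebra. Since the elements $X'_{\beta,I,i}$ and $Y'_{\beta,I,j}$ are defined by sums over subsets $J \subseteq I$ involving the sign $(-1)^{\tau(J,I\setminus J)+|J|}$, and the left-hand sides involve similar sums, the strategy is to substitute the definitions and then manipulate the index of summation so the two sides match term by term. Identities (1) and (2) are the most basic: starting from the right-hand side, insert the definition of $X'_{\beta,I\setminus J,i}$, which introduces an inner sum over subsets of $I\setminus J$; then one reorganizes the double sum over pairs $(J, K)$ with $K \subseteq I \setminus J$ as a sum over ordered pairs of disjoint subsets whose union is some fixed $L \subseteq I$, and checks that the accumulated signs $(-1)^{\tau(J,I\setminus J)}(-1)^{\tau(K,(I\setminus J)\setminus K)+|K|}$ collapse — because of the cocycle-type identity $\tau(J,I\setminus J) + \tau(K,(I\setminus J)\setminus K) \equiv \tau(J\cup K, I\setminus(J\cup K)) + \tau(J,K) \pmod 2$ — leaving only the terms with $L = I$, where the telescoping forces everything except the $\xi_\emptyset \cdot t^{-\beta}\cdot t^\beta\xi_I d_i$ term to cancel. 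This last telescoping is exactly the statement that $\sum_{J \subseteq I}(-1)^{|J|}\binom{\cdots}{}$-type alternating sums over a nonempty set vanish. Since $d_i$ and $\partial/\partial\xi_j$ play no role in these manipulations beyond being inert right factors, (1) and (2) are proved by the same computation, and likewise (3)–(4) and (5)–(6) come in matched pairs.

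For (3) and (5), the new ingredient is the derivative $\partial/\partial\xi_{l_s}$ acting on $\xi_J$ (respectively $\xi_{I\setminus J}$). First I would note that $\frac{\partial\xi_J}{\partial\xi_{l_s}}$ is zero unless $l_s \in J$, in which case it equals $(-1)^{p-1}\xi_{J\setminus\{l_s\}}$ where $p$ is the position of $l_s$ within $\underline{J}$. One then splits the sum over $J$ according to whether $l_s \in J$, reindexes by $J' = J \setminus \{l_s\}$ (a subset of $I \setminus \{l_s\}$), and tracks how $\tau(J, I\setminus J)$, $|J|$, and the positional sign from the derivative combine. The key sign bookkeeping is that pulling $l_s$ out of the first slot and reconciling it with the inversion count $\tau$ produces precisely the factor $(-1)^s$ (where $s$ is the position of $l_s$ in $\underline{I}$), independent of $J'$, so that the remaining sum over $J' \subseteq I\setminus\{l_s\}$ is exactly (the defining sum of) $X'_{\beta, I\setminus\{l_s\}, i}$. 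For (5) the derivative hits $\xi_{I\setminus J}$ instead; the analysis is symmetric, and the shift in the exponent by one (from $(-1)^s$ to $(-1)^{s-1}$) reflects that in (5) there is no extra $(-1)^{|J|}$ in the summand being differentiated while the $\tau$-contribution shifts oppositely.

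The main obstacle I anticipate is managing the sign conventions cleanly: the interplay among $\tau(J, I\setminus J)$, the parities $|J|$, the positional signs coming from $\partial/\partial\xi_{l_s}$, and the supercommutation sign $(-1)^{|b||g|}$ implicit in writing products like $\xi_J \cdot X'_{\beta,\cdots}$ inside $\bar U$. A careful approach is to fix notation once — writing $\underline{I} = (l_1,\dots,l_k)$ and, for $J \subseteq I$, letting $\varepsilon(J,I) = \tau(J, I\setminus J) \bmod 2$ be the sign of the shuffle permutation sending $(\underline{J},\underline{I\setminus J})$ to $\underline I$ — and then recording the two lemmas (a) $\varepsilon(J \cup K, I) = \varepsilon(J, I) + \varepsilon(K, I\setminus J) + \tau(J,K) \pmod 2$ for disjoint $J, K \subseteq I$, and (b) the positional identity for $\frac{\partial \xi_J}{\partial \xi_{l_s}}$ stated above. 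With these two facts in hand, all six identities reduce to finite alternating-sum cancellations. I would prove (1)–(2) in full detail as the template, then indicate that (3)–(6) follow from the same reindexing together with (a) and (b), since writing out all four in complete detail would be repetitive. Throughout, one uses that the expressions live in the associative superalgebra $\bar U = \bar U_{m,n}$, so all manipulations are legitimate identities of elements there rather than merely formal.
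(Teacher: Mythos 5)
Your proposal is correct and follows essentially the same route as the paper: for (1)–(2) the paper also substitutes the definition of $X'$, regroups the double sum by the union $K=J\cup J'$ of the two left $\xi$-factors, and kills every $K\ne\varnothing$ via exactly your sign identity (a) combined with $\sum_{J\subseteq K}(-1)^{|K\setminus J|}=0$; for (3)–(6) it likewise splits on whether $l_s$ lies in the differentiated index set and checks that the positional sign of $\partial/\partial\xi_{l_s}$ combines with the shift in $\tau$ to give the constant factor $(-1)^s$ (resp. $(-1)^{s-1}$). The only quibble is notational: the surviving term in (1)–(2) is the one where the \emph{union} of the two left index sets is empty (equivalently your $L$, the index of the rightmost $\xi$, equals $I$), which is what you evidently intend.
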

\begin{proof}  First of all, from

\begin{align*}&\Big(\sum_{J\subseteq K} (-1)^{\tau(J,I\setminus J)+\tau(K\setminus J, I\setminus K)+|K\setminus J|+\tau(J,K\setminus J)}\Big)\xi_{K}\xi_{I\setminus K}\\
&=\sum_{J\subseteq K} (-1)^{\tau(J,I\setminus J)+\tau(K\setminus J, I\setminus K)+|K\setminus J|}\xi_{J} \xi_{K\setminus J}\xi_{I\setminus K}\\ &=\sum_{J\subseteq K} (-1)^{\tau(J,I\setminus J)+|K\setminus J|}\xi_{J} \xi_{(K\setminus J)\cup (I\setminus K)}\\& =\sum_{J\subseteq K} (-1)^{\tau(J,I\setminus J)+|K\setminus J|}\xi_{J} \xi_{I\setminus J}\\ &=\sum_{J\subseteq K} (-1)^{|K\setminus J|}\xi_{I}=0,\forall K\ne \varnothing,\end{align*} we have $\sum_{J\subseteq K} (-1)^{\tau(J,I\setminus J)+\tau(K\setminus J, I\setminus K)+|K\setminus J|+\tau(J,K\setminus J)}=0,\forall \varnothing \ne K\subseteq I$.
Then \begin{align*}&\sum_{J\subseteq I}(-1)^{\tau(J,I\setminus J) }\xi_{J}\cdot  X'_{\beta,I\setminus J,i}\\
&= \sum_{J\subseteq I}\sum_{J'\subseteq I\setminus J}(-1)^{\tau(J,I\setminus J) }(-1)^{\tau(J', I\setminus (J\cup J'))+|J'|}\xi_{J}\cdot  t^{-\beta}\xi_{J'}\cdot t^{\beta}\xi_{I\setminus (J\cup J')}d_i\\
&=\sum_{K\subseteq I}\sum_{J\subseteq K}(-1)^{\tau(J,I\setminus J)+\tau(K\setminus J, I\setminus K)+|K\setminus J|}\xi_{J}\cdot  t^{-\beta}\xi_{K\setminus J}\cdot t^{\beta}\xi_{I\setminus K}d_i\\
&=\sum_{K\subseteq I}\sum_{J\subseteq K}(-1)^{\tau(J,I\setminus J)+\tau(K\setminus J, I\setminus K)+|K\setminus J|+\tau(J,K\setminus J)} t^{-\beta}\xi_{K}\cdot t^{\beta}\xi_{I\setminus K}d_i\\
&=t^{-\beta}\cdot t^{\beta}\xi_I d_i+\sum_{\varnothing\ne K\subseteq I}\Big(\sum_{J\subseteq K}(-1)^{\tau(J,I\setminus J)+\tau(K\setminus J, I\setminus K)+|K\setminus J|+\tau(J,K\setminus J)}\Big) t^{-\beta}\xi_{K}\cdot t^{\beta}\xi_{I\setminus K}d_i\\
&=t^{-\beta}\cdot t^{\beta}\xi_I d_i.\end{align*} We get (1).

Similarly we have (2).

For any $\underline{J}=(l_{i_1},\ldots l_{i_p})$ with $i_q=s$ for some $1\le q\le p$,  there are exactly $(s-1)-(q-1)=s-q$ elements in $I\setminus J$ smaller than $l_{s}$. So  $\tau(J,I\setminus J)=s-q+\tau(J\setminus \{l_s\}, I\setminus J)$. Then  $(-1)^{\tau(J,I\setminus J)+|J|}\frac{\partial \xi_{J}}{\partial \xi_{l_s}}=(-1)^{\tau(J\setminus\{l_s\},I\setminus J)+s-q+|J\setminus\{l_s\}|+1}(-1)^{q-1} \xi_{J\setminus\{l_s\}}=(-1)^{\tau(J\setminus\{l_s\},I\setminus J)+|J\setminus\{l_s\}|+s} \xi_{J\setminus\{l_s\}}$. Thus
 \begin{align*}&\sum_{J\subseteq I}(-1)^{\tau(J,I\setminus J)+|J|}t^{-\beta}\frac{\partial \xi_{J}}{\partial \xi_{l_s}}\cdot t^{\beta}\xi_{I\setminus J} d_i\\
 &=\sum_{l_s\in J\subseteq I}(-1)^{\tau(J\setminus\{l_s\},I\setminus J)+|J\setminus\{l_s\}|+s}t^{-\beta}\xi_{J\setminus \{l_s\}}\cdot t^{\beta}\xi_{I\setminus J} d_i=(-1)^{s}X'_{\beta,I\setminus\{l_s\},i}.
\end{align*} So we get (3). Similarly we have (4). Now for any $\underline{I\setminus J}=(l_{i_{p+1}},\ldots,l_{i_k})$ with $i_{q}=s$ for some $p+1\le q\le k$, there are exactly $(k-s)-(k-q)$ elements in $J$ greater than $l_s$. Thus $\tau(J,I\setminus J)=q-s+\tau(J, I\setminus (J\cup \{l_s\}))$ and $(-1)^{\tau(J,I\setminus J)}\frac{\partial \xi_{I\setminus J}}{\partial \xi_{l_s}} =(-1)^{\tau(J, I\setminus (J\cup \{l_s\}))+q-s}\frac{\partial \xi_{I\setminus J}}{\partial \xi_{l_s}}=(-1)^{\tau(J, I\setminus (J\cup \{l_s\}))+q-s}(-1)^{q-|J|-1}\xi_{ I\setminus (J\cup \{l_s\})}=(-1)^{\tau(J, I\setminus (J\cup \{l_s\}))+|J|+s-1}\xi_{ I\setminus (J\cup \{l_s\})}$, from which we may easily deduce (5) and (6).
\end{proof}

\begin{lemma}\label{lem3.2}{\rm (1).} $\mathcal{T}=\{x\in A\cdot W|[x, \Delta]=[x,A]=0\}$. Thus  $\mathcal{T}$ is a Lie supersubalgebra of $\bar U$.

{\rm (2).} $\tilde{\mathcal{B}}=\mathcal{B}\cup \{d_i,\frac{\partial}{\partial \xi_j}|i=1,\ldots,m;j=1,\ldots,n\}$ is a basis of the free left $A$ module $A\cdot W$.\end{lemma}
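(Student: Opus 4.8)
The plan is to prove (2) first and then use the resulting $A$-basis to prove (1). For (2), note that by the PBW description $\bar U=A\cdot U(W)$ recalled above, and since the displayed $\bC$-basis of $A\cdot W$ consists precisely of the products (monomial in $A$)$\cdot$(standard basis vector of $W$), the left $A$-module $A\cdot W$ is free on the standard basis $\{t^\beta\xi_J d_i,\ t^\beta\xi_J\partial/\partial\xi_j\}$ of $W$ (identified with $1\cdot(-)$ inside $\bar U$). The set $\tilde{\mathcal B}$ is indexed by exactly the same set: $d_i,\partial/\partial\xi_j$ correspond to index $(0,\varnothing)$, the $X_{\alpha,i},Y_{\alpha,j}$ ($\alpha\ne0$) to $(\alpha,\varnothing)$, and $X'_{\beta,I,i},Y'_{\beta,I,j}$ ($I\ne\varnothing$) to $(\beta,I)$. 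I would then show the change of basis between $\tilde{\mathcal B}$ and the standard basis is triangular with invertible diagonal, filtering by $|J|$ (with $d_i,\partial/\partial\xi_j$ placed below $X_{\alpha,i},Y_{\alpha,j}$). In one direction each element of $\tilde{\mathcal B}$ is manifestly an $A$-combination of standard vectors, the ``leading'' one (the $J=\varnothing$ summand in $X'_{\beta,I,i},Y'_{\beta,I,j}$; the $t^{\alpha}$-term in $X_{\alpha,i},Y_{\alpha,j}$) carrying the unit coefficient $t^{-\beta}$, resp.\ $t^{-\alpha}$, and the remaining standard vectors having strictly smaller $|J|$ (or being among the $d_i,\partial/\partial\xi_j$). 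Conversely, multiplying the identities of Lemma~\ref{lem3.1}(1)--(2) on the left by $t^{\beta}$ expresses each standard vector $t^\beta\xi_I d_i$, $t^\beta\xi_I\partial/\partial\xi_j$ as an $A$-combination of $\tilde{\mathcal B}$ with the same triangular shape (reading the $J=I$ summand via $X'_{\beta,\varnothing,i}=X_{\beta,i}+d_i$, resp.\ $=d_i$, for $\beta\ne0$, resp.\ $\beta=0$, and using $t^\beta d_i=t^\beta\cdot X_{\beta,i}+t^\beta\cdot d_i$ in the $I=\varnothing$ case). An induction on $|J|$, using that the diagonal coefficients are units, shows $\tilde{\mathcal B}$ is $A$-linearly independent, hence a basis.

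For the inclusion $\mathcal T\subseteq\{x\in A\cdot W:[x,\Delta]=[x,A]=0\}$ in (1), I check that each $b\in\mathcal B$ is killed by $\ad$ on $\Delta$ and on $A$. For $X_{\alpha,i},Y_{\alpha,j}$ this is an immediate computation from $[d_k,t^\gamma\xi_K]=\gamma_k t^\gamma\xi_K$ together with $\ad(t^{-\alpha}\cdot z)=t^{-\alpha}\cdot\ad z$ on $A$ and $\Delta$, the contribution of $t^\alpha d_i$ (resp.\ $t^\alpha\partial/\partial\xi_j$) cancelling that of $d_i$ (resp.\ $\partial/\partial\xi_j$). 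For $X'_{\beta,I,i},Y'_{\beta,I,j}$, commutation with every $d_k$ and with all of $A$ is a termwise check (the $A$-case reducing to $\sum_{J\subseteq I}(-1)^{|J|}=0$ for $I\ne\varnothing$); commutation with the odd $\partial/\partial\xi_j$ is the only delicate point. By the graded Leibniz rule, $[\partial/\partial\xi_j,X'_{\beta,I,i}]$ (for $j=l_s\in I$) splits into the part where $\partial/\partial\xi_j$ differentiates $\xi_J$ and the part where it differentiates $\xi_{I\setminus J}$, and by Lemma~\ref{lem3.1}(3) and (5) these equal $(-1)^sX'_{\beta,I\setminus\{j\},i}$ and $(-1)^{s-1}X'_{\beta,I\setminus\{j\},i}$, hence cancel; likewise for $Y'_{\beta,I,j}$ via (4),(6).

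For the reverse inclusion, take $x\in A\cdot W$ with $[x,\Delta]=[x,A]=0$ and write $x=\sum_{b\in\tilde{\mathcal B}}f_b\cdot b$ with $f_b\in A$, using (2). Since $[f\cdot b,a]=f\cdot[b,a]$ for $f,a\in A$, $b\in W$, and $[b,A]=0$ for $b\in\mathcal B$, we get $[x,a]=\sum_i f_{d_i}\,d_i(a)+\sum_j f_{\partial/\partial\xi_j}\,(\partial a/\partial\xi_j)$; evaluating at $a=t_k$ (a unit) and $a=\xi_k$ forces $f_{d_i}=f_{\partial/\partial\xi_j}=0$, so $x=\sum_{b\in\mathcal B}f_b\cdot b$. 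Now for $b\in\mathcal B$ one computes $[f_b\cdot b,d_k]=-d_k(f_b)\cdot b$ and $[f_b\cdot b,\partial/\partial\xi_j]=\pm(\partial f_b/\partial\xi_j)\cdot b$ (using $[b,\Delta]=0$), so $[x,\Delta]=0$ together with the $A$-linear independence of $\mathcal B$ (from (2)) forces $d_k(f_b)=0$ and $\partial f_b/\partial\xi_j=0$ for all $b,k,j$, i.e.\ $f_b\in\bC$; hence $x\in\spn_{\bC}\mathcal B=\mathcal T$. Finally $\mathcal T$ is a Lie supersubalgebra: $A\cdot W$ is closed under the bracket, and whenever $x,y$ commute with a fixed $z$ so does $[x,y]$ by the graded Jacobi identity, so $[x,y]$ again annihilates $\Delta$ and $A$.

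The main obstacle is the first inclusion of (1), specifically showing that $X'_{\beta,I,i}$ and $Y'_{\beta,I,j}$ commute with the odd derivations $\partial/\partial\xi_j$: this is exactly where the sign-sensitive combinatorial identities Lemma~\ref{lem3.1}(3)--(6) are indispensable, the two halves of the Leibniz expansion cancelling only because of the mismatched signs $(-1)^s$ versus $(-1)^{s-1}$. Everything else --- the triangularity bookkeeping in (2) and the extraction of the coefficients $f_b$ in the reverse inclusion --- is routine once (2) and Lemma~\ref{lem3.1} are available.
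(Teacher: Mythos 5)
Your proposal is correct and follows essentially the same route as the paper: part (2) via the change of basis coming from Lemma \ref{lem3.1}(1)--(2), the inclusion $\mathcal T\subseteq T_1$ via the cancellation encoded in Lemma \ref{lem3.1}(3)--(6), and the reverse inclusion by expanding $x$ in the $A$-basis $\tilde{\mathcal B}$ and using $[x,\Delta]=[x,A]=0$ to force the coefficients into $\bC$ and kill the $A\cdot\Delta$ part. The only differences are cosmetic (you use $[x,A]=0$ before $[x,\Delta]=0$, and you spell out the triangularity and sign checks that the paper leaves as ``straightforward'').
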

\begin{proof}From Lemma \ref{lem3.1} (1) and (2), we know that $\mathcal{B}\cup \{d_i,\frac{\partial}{\partial \xi_j}|i=1,\ldots,m;j=1,\ldots,n\}$ is a generating set of the free left $A$ module $A\cdot W$. And it is straightforward to verify that $\mathcal{B}\cup \{d_i,\frac{\partial}{\partial \xi_j}|i=1,\ldots,m;j=1,\ldots,n\}$ is $A$-linearly independent. So we have (2).  Denote $T_1=\{x\in A\cdot W|[x, \Delta]=[x,A]=0\}$. It is easy to see that $[\mathcal{T},A]=0$, and from Lemma \ref{lem3.1} (3)-(6), we have $[\mathcal{T},\Delta]=0$. So $\mathcal{T}\subseteq T_1$.
Using (2), for any $x\in T_1$, write $x=\sum_{i=1}^k f_i\cdot x_i+x'$ with $f_i\in A, x_i\in \mathcal{B},x'\in A\cdot \Delta$. Then $[d,x]=\sum_{i=1}^k[d,f_i]\cdot x_i+[d,x']=0,\forall d\in \Delta$. That is $[\Delta, f_i]=[\Delta,x']=0,\forall i=1,2,\ldots,k$. So we have $f_i\in \bC$ and $x'\in \Delta$. And from $[x, f]=[x',f]=0,\forall f\in A$, we have $x'=0$. Now $T_1\subseteq \mathcal{T}$. So we have proved $\mathcal{T}=T_1$, which is (1).
\end{proof}

\begin{lemma}\label{lem3.3} We have the   associative superalgebra isomorphism \begin{equation}\label{iota}
\iota:K_{m,n}\otimes U(\mathcal{T})\rightarrow \overline{U}, \  \
\iota(x \otimes y)=x\cdot y,
\end{equation}
where $x\in K_{m,n}, y\in U(\mathcal{T})$.
\end{lemma}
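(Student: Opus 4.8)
The plan is to build the map $\iota$ explicitly on generators, check it is a well-defined algebra homomorphism, and then prove bijectivity by a filtration/PBW comparison. First I would define $\iota$ on the two tensor factors: on $K_{m,n}\otimes 1$ send $x\mapsto x$ (using the identification of $K_{m,n}$ with its image in $\bar U$), and on $1\otimes U(\mathcal T)$ send $y\mapsto y$ (using $\mathcal T\subseteq A\cdot W\subseteq \bar U$ from Lemma \ref{lem3.2}(1)). To see these assemble into an algebra map on the tensor product $K_{m,n}\otimes U(\mathcal T)$, the essential point is that the images commute: elements of $\mathcal T$ commute with all of $A$ and all of $\Delta$ by Lemma \ref{lem3.2}(1), hence with the whole Weyl superalgebra $K_{m,n}$ (which is generated by $A$ and $\Delta$). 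Since $\mathcal T$ is a Lie supersubalgebra, $U(\mathcal T)$ centralizes $K_{m,n}$ in $\bar U$, so the multiplication map $x\otimes y\mapsto x\cdot y$ is an algebra homomorphism out of $K_{m,n}\otimes U(\mathcal T)$ (with the Koszul sign convention on the tensor product of superalgebras, which matches the sign in the product inside $\bar U$).

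Next I would prove surjectivity. By definition $\bar U=A\cdot U(W)$, so it suffices to show every $t^\alpha\xi_I d_i$ and $t^\alpha\xi_I\frac{\partial}{\partial\xi_j}$ lies in the image. Write $t^\alpha\xi_I d_i = t^\alpha\cdot(t^{-\alpha}\cdot t^\alpha\xi_I d_i)$ and apply Lemma \ref{lem3.1}(1): this expresses $t^{-\alpha}\cdot t^\alpha\xi_I d_i$ as an $A$-linear combination of the elements $X'_{\alpha,J,i}\in\mathcal T$ (and, when $I=\varnothing$, of $d_i\in\Delta$). Since $A\subseteq K_{m,n}$, $\Delta\subseteq K_{m,n}$, and the $X',Y'\in\mathcal T$, the generators of $W$ all sit in $K_{m,n}\cdot U(\mathcal T)=\im\iota$; combined with $A\subseteq\im\iota$ and the fact that $\im\iota$ is a subalgebra, this gives $\bar U=\im\iota$. (This is exactly why $\tilde{\mathcal B}$ was shown in Lemma \ref{lem3.2}(2) to be an $A$-basis of $A\cdot W$.)

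For injectivity I would compare graded dimensions via a PBW-type argument. Put the filtration on $\bar U$ by total degree in $W$ (equivalently, the standard filtration of $U(W)$ inherited through $\bar U=A\cdot U(W)$, with $A$ in degree $0$). Then $\gr\bar U\cong A\otimes S(W)$ as a (super)commutative algebra, where $S$ denotes the symmetric superalgebra. On the source, filter $K_{m,n}$ by degree in $\Delta$ and $U(\mathcal T)$ by its PBW filtration; then $\gr(K_{m,n}\otimes U(\mathcal T))\cong (A\otimes S(\Delta))\otimes S(\mathcal T)$. The associated graded of $\iota$ is the multiplication map $A\otimes S(\Delta)\otimes S(\mathcal T)\to A\otimes S(W)$ induced by the inclusions $\Delta\hookrightarrow W$ and $\mathcal T\hookrightarrow A\otimes W = A\cdot W$; concretely it sends $\xi$-monomials times $d_i,\frac{\partial}{\partial\xi_j},X'_{\beta,I,i},Y'_{\beta,I,j}$ to their symbols in $A\otimes S(W)$. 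By Lemma \ref{lem3.2}(2), $\tilde{\mathcal B}=\mathcal B\cup\{d_i,\frac{\partial}{\partial\xi_j}\}$ is a free $A$-basis of $A\cdot W$, so on each graded piece $\gr\iota$ is exactly the statement that $A\otimes S(\text{free }A\text{-basis of }A\cdot W)\to A\otimes S_A(A\cdot W)\cong A\otimes S(W)$ is an isomorphism — which is PBW for the free module $A\cdot W$ over $A$. Hence $\gr\iota$ is bijective, so $\iota$ is bijective.

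The main obstacle I anticipate is bookkeeping the filtrations so that $\gr\iota$ really is the "multiply the symbols" map: one must check that the bracket relations in $A\cdot W$ (the explicit formula $[f\cdot b,g\cdot d]=f[b,g]\cdot d-(-1)^{\cdots}g[d,f]\cdot b+(-1)^{\cdots}fg\cdot[b,d]$) lower $W$-degree, so that the symbols of $d_i,\frac{\partial}{\partial\xi_j},X',Y'$ in $\gr\bar U$ together form a free $A$-generating set of the degree-one part, and that $S(\mathcal T)$ does not collapse (no hidden relations among symbols of the $X',Y'$), which again is Lemma \ref{lem3.2}(2). The super signs need care but are routine; alternatively one can bypass $\gr$ and argue injectivity directly: a nonzero element of the kernel, written in the PBW basis coming from $\tilde{\mathcal B}$, would have a top-degree part mapping to zero in $A\cdot U(W)$, contradicting freeness. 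Either way the crux is Lemma \ref{lem3.2}(2).
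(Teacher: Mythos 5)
Your proof is correct, and its first half coincides with the paper's: both obtain well-definedness of $\iota$ from the fact that $K_{m,n}$ and $\mathcal{T}$ supercommute in $\bar U$ (Lemma \ref{lem3.2}(1)). For bijectivity you take a genuinely different route. The paper constructs an explicit two-sided inverse: it notes that $\iota$ restricts to a bijection $\iota'$ from $W'=A\otimes \mathcal{T}+(A\cdot \Delta+A)\otimes \bC$ onto the Lie superalgebra $A\cdot W+A$, sets $\eta={\iota'}^{-1}|_{\tW}$, extends to $U(\tW)$, checks that $\mathcal{J}$ is killed, and gets $\bar\eta=\iota^{-1}$; a useful byproduct is the explicit formula for $\eta$, which is reused later to define $\phi_1$ and the action (\ref{eq3.7}). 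You instead prove surjectivity directly from Lemma \ref{lem3.1}(1)--(2) and injectivity by passing to associated graded algebras, with Lemma \ref{lem3.2}(2) ensuring that the symbols of $\tilde{\mathcal{B}}$ freely generate $\gr \bar U\cong A\otimes S(W)$ over $A$. Both arguments rest on the same combinatorial input; yours trades the verification $\mathcal{J}\subseteq\ker\tilde\eta$ for the filtration bookkeeping plus the linear isomorphism $A\otimes U(W)\cong\bar U$, which the paper also assumes implicitly when it writes $\bar U=A\cdot U(W)$ and calls $A\cdot W$ a free $A$-module. Two minor points: in your surjectivity step the $J=I$ term of Lemma \ref{lem3.1}(1) contributes $\xi_I\cdot(X_{\alpha,i}+d_i)$ for \emph{every} $I$, not only $I=\varnothing$ (harmless, since $d_i\in K_{m,n}$); and in the graded step one should observe that no monomial symbol vanishes in $A\otimes S(W)$, because neither the basis of $K_{m,n}$ nor the PBW basis of $U(\mathcal{T})$ repeats an odd generator.
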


\begin{proof} Note that $\mathcal{T}$ is a Lie supersubalgebra of $\bar{U}$ and $K_{m,n}$ is an associative supersubalgebra of $\bar{U}$. So the restrictions of $\iota$ on $K_{m,n}$ and $U(\mathcal{T})$ are well-defined. From Lemma \ref{lem3.2}, $\iota(K_{m,n})$ and $\iota(U(\mathcal{T}))$  are super commutative in $\bar{U}$. Hence $\iota$ is a well-defined  homomorphism of associative superalgebras.  Let  $W'=A\otimes \mathcal{T}+(A\cdot \Delta+A)\otimes \bC \subseteq K_{m,n}\otimes U(\mathcal{T})$. From Lemma \ref{lem3.1} (1) and (2), it is straightforward to verify that $\iota'=\iota|_{W'}:W'\rightarrow  A\cdot W+A$ is bijective hence a Lie superalgebra isomorphism. Therefore, the restriction of ${\iota'}^{-1}$ to $\tW=W+A$ gives a Lie superalgebra homomorphism $\eta: \tW\rightarrow K_{m,n}\otimes U(\mathcal{T})$ with
\begin{equation}\begin{split}&\eta(t^{\beta}\xi_I)=t^{\beta}\xi_I\otimes 1;\\
&\eta(t^{\beta}\xi_Id_i)=\sum_{ J\subsetneq I}(-1)^{\tau(J,I\setminus J) }t^{\beta}\xi_{J}\otimes  X'_{\beta,I\setminus J,i}+t^{\beta}\xi_I\otimes  X_{\beta,i}+(t^{\beta}\xi_I\cdot d_i)\otimes 1;\\
&\eta(t^{\beta}\xi_I\frac{\partial}{\partial\xi_j})=\sum_{J\subsetneq I}(-1)^{\tau(J,I\setminus J) }t^{\beta}\xi_{J}\otimes  Y'_{\beta,I\setminus J,j}+t^{\beta}\xi_I\otimes  Y_{\beta,j}+(t^{\beta}\xi_I\cdot\frac{\partial}{\partial\xi_j})\otimes 1.\end{split}
\end{equation}
So we have the associative superalgebra homomorphism $\tilde{\eta}:U(\tW)\rightarrow  K_{m,n}\otimes U(\mathcal{T})$ with $\tilde{\eta}|_{\tW}=\eta$. And it is clear that $\mathcal{J}\subseteq {\rm Ker}(\tilde{\eta})$. Hence we have the induced associative superalgebra homomorphism $\bar{\eta}:\bar{U}\rightarrow K_{m,n}\otimes U(\mathcal{T})$. It is clear that $\bar{\eta}=\iota^{-1}$ and $\iota$ is an isomorphism.
 \end{proof}

Let $\mathfrak{m}=\mathfrak{m}_{m,n}$ be the maximal ideal of $A_{m,n}$ generated by $t_i-1, \xi_j, i=1,2,\ldots,m;j=1,2,\ldots,n$.  Then $\mathfrak{m}\Delta$ is a Lie super subalgebra of $W=A\Delta$. And $\mathfrak{m}\Delta$ has a basis consisting of
\begin{equation}
(t^\alpha-1)d_i,(t^\alpha-1)\frac{\partial}{\partial\xi_j},t^\beta\xi_{I}d_i,
t^\beta\xi_{I}\frac{\partial}{\partial\xi_j},\end{equation}
$\alpha\in\Z^m\setminus \{0\}, \beta\in \Z^m,\, i=1,\dots,m,\ j=1,\dots,n; \varnothing\ne I \subseteq\{1,\dots,n\}.$

In general, the Lie brackets in $\mathcal{T}$ is hard to compute out directly even for $W_{m,0}$, see \cite{E2}. Here we construct a new isomorphism from $\mathcal{T}$ to $\mathfrak{m}\Delta$, which we believe is useful for further study on various Lie superalgebras of Cartan type.

Define a linear map $\psi:\mathcal{T}\rightarrow  \mathfrak{m}\Delta$ by
\begin{equation}\begin{split}&\psi(X_{\alpha,i})=(t^\alpha-1)d_i, \psi(X'_{\alpha,I,i})=t^\alpha\xi_{I}d_i,\\
 &\psi(Y_{\alpha,i})=(t^\alpha-1)\frac{\partial}{\partial\xi_i},\psi(Y'_{\alpha,I,i})=t^\alpha\xi_{I}\frac{\partial}{\partial\xi_i}. \end{split}\end{equation}
$\psi$ is clearly  an isomorphism of vector superspaces. In fact, we have

\begin{theorem}\label{the3.4}$\psi:\mathcal{T}
\rightarrow \mathfrak{m}\Delta$ is an isomorphism of Lie superalgebras.\end{theorem}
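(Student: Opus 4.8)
The plan is to avoid computing brackets inside $\mathcal T$ and instead \emph{transport} the transparent Lie structure of $\mathfrak m\Delta$ into $\bar U$ through the isomorphism $\iota$ of Lemma \ref{lem3.3}. First I would define a linear map $\zeta:\tW\to K_{m,n}\otimes U(\mathfrak m\Delta)$ by copying the formula for $\eta$ from Lemma \ref{lem3.3} but replacing each second-slot occurrence of $X_{\beta,i},X'_{\beta,I,i},Y_{\beta,j},Y'_{\beta,I,j}$ by its $\psi$-image: $\zeta(t^{\beta}\xi_I)=t^{\beta}\xi_I\otimes 1$ and
\[\zeta(t^{\beta}\xi_Id_i)=\sum_{J\subsetneq I}(-1)^{\tau(J,I\setminus J)}t^{\beta}\xi_{J}\otimes t^{\beta}\xi_{I\setminus J}d_i+t^{\beta}\xi_I\otimes(t^{\beta}-1)d_i+(t^{\beta}\xi_I\cdot d_i)\otimes 1,\]
and analogously on $t^{\beta}\xi_I\frac{\partial}{\partial\xi_j}$. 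Since $\eta(\tW)\subseteq K_{m,n}\otimes(\mathcal T\oplus\bC 1)$, this is exactly $\zeta=(\id_{K_{m,n}}\otimes\psi')\circ\eta$, where $\psi'$ is $\psi$ on $\mathcal T$ and $\id$ on $\bC 1$.

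The one substantive step is to verify that $\zeta$ is a homomorphism of Lie superalgebras. The advantage is that all bracket computations now happen on the target side, where brackets in $K_{m,n}$ are those of the Weyl superalgebra and brackets in $\mathfrak m\Delta\subseteq W$ are merely brackets of (super)vector fields — unlike the brackets of $\mathcal T$. To make the verification finite I would invoke the Jacobi bootstrap: the set $\{x\in\tW:\zeta([x,y])=[\zeta(x),\zeta(y)]\text{ for all }y\in\tW\}$ is a Lie subsuperalgebra of $\tW$, so it is enough to check $\zeta([g,y])=[\zeta(g),\zeta(y)]$ for $g$ in a fixed finite set of low-degree generators of $\tW$ (e.g. the $t_i^{\pm1},\xi_j,d_i,\frac{\partial}{\partial\xi_j},t^{\pm e_i}d_j,\xi_k d_j,t^{\pm e_i}\frac{\partial}{\partial\xi_j},\xi_k\frac{\partial}{\partial\xi_j}$) and $y$ over the standard basis of $\tW$; each such identity is a direct computation, the only delicate point being the sign bookkeeping over subsets of $\{1,\dots,n\}$, carried out as in the proof of Lemma \ref{lem3.1}. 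Once $\zeta$ is a Lie homomorphism it extends to $\tilde\zeta\colon U(\tW)\to K_{m,n}\otimes U(\mathfrak m\Delta)$, and $\tilde\zeta$ visibly annihilates the generators $t^0-1$ and $t^{\alpha}\xi_I\cdot t^{\beta}\xi_J-t^{\alpha+\beta}\xi_I\xi_J$ of $\mathcal J$, hence descends to an associative superalgebra homomorphism $\bar\zeta\colon\bar U\to K_{m,n}\otimes U(\mathfrak m\Delta)$.

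The remaining steps are bookkeeping-free. On the subalgebra $K_{m,n}=\langle A,\Delta\rangle$ of $\bar U$ we get $\bar\zeta(x)=x\otimes 1$, since both maps are algebra homomorphisms agreeing on $A\cup\Delta$. On $\mathcal B$ we get $\bar\zeta(b)=1\otimes\psi(b)$: for $b=X_{\alpha,i},Y_{\alpha,j}$ this is immediate from $t^{-\alpha}\cdot t^{\alpha}=1$, and for $b=X'_{\alpha,I,i},Y'_{\alpha,I,j}$ it follows with no further combinatorics from the identity $\bar\eta(b)=\iota^{-1}(b)=1\otimes b$ of Lemma \ref{lem3.3}: expanding $b$ by its defining formula and applying $\id_{K_{m,n}}\otimes\psi'$, which is $(K_{m,n}\otimes 1)$-linear, converts $\bar\eta(b)=1\otimes b$ into $\bar\zeta(b)=1\otimes\psi'(b)=1\otimes\psi(b)$. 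Therefore $\bar\zeta\circ\iota\colon K_{m,n}\otimes U(\mathcal T)\to K_{m,n}\otimes U(\mathfrak m\Delta)$ is an algebra homomorphism with $(\bar\zeta\circ\iota)(1\otimes b)=1\otimes\psi(b)$ for all $b\in\mathcal B$; its restriction to the subalgebra $\bC\otimes U(\mathcal T)\cong U(\mathcal T)$ is an algebra homomorphism $F\colon U(\mathcal T)\to K_{m,n}\otimes U(\mathfrak m\Delta)$ with $F(x)=1\otimes\psi(x)$ for every $x\in\mathcal T$.

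To finish: the restriction to $\mathcal T$ of any algebra homomorphism out of $U(\mathcal T)$ is automatically a homomorphism of Lie superalgebras (into the commutator superalgebra), because $xy-(-1)^{|x||y|}yx=[x,y]$ in $U(\mathcal T)$ for homogeneous $x,y\in\mathcal T$. Applying this to $F$, and then comparing second tensor components (using that $\mathfrak m\Delta$ is a Lie subsuperalgebra of $U(\mathfrak m\Delta)$ under the commutator), yields $\psi([x,y])=[\psi(x),\psi(y)]$ for all $x,y\in\mathcal T$; since $\psi$ is already a bijection of vector superspaces, it is an isomorphism of Lie superalgebras. The main obstacle throughout is the single computational step of showing $\zeta$ is a Lie homomorphism, and only there is the sign combinatorics of Lemma \ref{lem3.1} needed; as an alternative one could instead verify $\psi([b,b'])=[\psi(b),\psi(b')]$ for $b,b'\in\mathcal B$ directly from the bracket formula for $A\cdot W$ combined with Lemma \ref{lem3.1}, at comparable cost.
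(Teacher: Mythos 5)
Your architecture is sound and genuinely different from the paper's, but as written it has one real gap: everything hinges on the claim that $\zeta=(\id_{K_{m,n}}\otimes\psi')\circ\eta$ is a homomorphism of Lie superalgebras, and that claim carries the entire content of the theorem --- it is equivalent, transported through $\iota$, to the statement that $\psi$ intertwines brackets. You reduce it correctly (the Jacobi bootstrap does show the set of good $x$ is a subsuperalgebra, and the final extraction of $\psi([x,y])=[\psi(x),\psi(y)]$ from the algebra map $F$ via linear independence of $\{1\}\cup\mathcal{B}$ and of $\{1\}\cup\psi(\mathcal{B})$ is fine), and you are right that the remaining identities live on the target side where the brackets are concrete. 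But each identity $\zeta([g,y])=[\zeta(g),\zeta(y)]$ is a multi-term cancellation in $K_{m,n}\otimes U(\mathfrak{m}\Delta)$: the products $(a\otimes u)(a'\otimes u')$ land in filtration degree $2$ of $U(\mathfrak{m}\Delta)$ and the degree-$2$ parts must cancel against each other, with the sign bookkeeping of Lemma \ref{lem3.1} superimposed. None of these computations is actually carried out, so the proposal is a correct plan rather than a proof; the step that remains is precisely the step the theorem is about.

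It is worth comparing with how the paper sidesteps this verification entirely. Equation (\ref{eq3.5}) (Lemma \ref{lem3.1}(1)(2) rewritten) shows $\mathfrak{m}\Delta\subseteq\mathfrak{m}\cdot\Delta+A\cdot\mathcal{T}$ inside $\bar{U}$. Using only $[\mathcal{T},A]=[\mathcal{T},\Delta]=0$ and $[\Delta,\Delta]=0$, one checks that $\mathfrak{m}\cdot\Delta+\mathfrak{m}\cdot\mathcal{T}$ is an ideal of the Lie superalgebra $\mathfrak{m}\cdot\Delta+A\cdot\mathcal{T}$, with quotient $(A\cdot\mathcal{T})/(\mathfrak{m}\cdot\mathcal{T})\cong\mathcal{T}$ since $A=\bC\oplus\mathfrak{m}$. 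The composite $\omega:\mathfrak{m}\Delta\hookrightarrow\mathfrak{m}\cdot\Delta+A\cdot\mathcal{T}\twoheadrightarrow\mathcal{T}$ is then a Lie superalgebra homomorphism for free, and (\ref{eq3.5}) identifies $\omega$ as the linear inverse of $\psi$; hence $\psi=\omega^{-1}$ is an isomorphism with no bracket ever computed. To complete your route you would need either to carry out the generator identities in full, or to replace the verification of $\zeta$ by this quotient argument --- at which point you have essentially reproduced the paper's proof.
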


\begin{proof}From Lemma \ref{lem3.1} (1)(2), we have
\begin{equation}\label{eq3.5}\begin{split}
&(t^{\alpha}-1)d_i=t^{\alpha}\cdot X_{\alpha,i}+(t^{\alpha}-1)\cdot d_i;\\&(t^{\alpha}-1)\frac{\partial}{\partial\xi_j}=t^{\alpha}\cdot Y_{\alpha,i}+(t^{\alpha}-1)\cdot\frac{\partial}{\partial\xi_j};\\&t^{\alpha}\xi_Id_i=\sum_{ J\subsetneq I}(-1)^{\tau(J,I\setminus J) }t^{\alpha}\xi_{J}\cdot  X'_{\alpha,I\setminus J,i}+t^{\alpha}\xi_I\cdot  X_{\alpha,i}+t^{\alpha}\xi_I\cdot d_i ;\\&t^{\beta}\xi_{I'}\frac{\partial}{\partial\xi_j}=\sum_{J'\subsetneq I'}(-1)^{\tau(J',I'\setminus J') }t^{\beta}\xi_{J'}\cdot Y'_{\beta,I'\setminus J',j}+t^{\beta}\xi_{I'}\cdot Y_{\beta,j}+t^{\beta}\xi_{I'}\cdot\frac{\partial}{\partial\xi_j}.\end{split}
\end{equation}

Hence we have $\mathfrak{m}\Delta\subseteq \mathfrak{m}\cdot \Delta+A\cdot \mathcal{T}$. However $\mathfrak{m}\cdot \Delta+\mathfrak{m}\cdot \mathcal{T}$ is clearly an ideal of $\mathfrak{m}\cdot \Delta+A\cdot \mathcal{T}$, so we have the Lie superalgebra homomorphism $\omega:\mathfrak{m}\Delta\subseteq \mathfrak{m}\cdot \Delta+A\cdot \mathcal{T}\rightarrow (\mathfrak{m}\cdot \Delta+A\cdot \mathcal{T})/ (\mathfrak{m}\cdot \Delta+\mathfrak{m}\cdot \mathcal{T}) \rightarrow  (A\cdot \mathcal{T})/(\mathfrak{m} \cdot \mathcal{T}) \rightarrow \mathcal{T}$. More precisely,  from (\ref{eq3.5}), we have
\begin{equation}\begin{split}&\omega((t^\alpha-1)d_i)=X_{\alpha,i}, \omega(t^\alpha\xi_{I}d_i))=X'_{\alpha,I,i},\\
 &\omega((t^\alpha-1)\frac{\partial}{\partial\xi_i})=Y_{\alpha,i},\omega(t^\alpha\xi_{I}\frac{\partial}{\partial\xi_i})=Y'_{\alpha,I,i}. \end{split}\end{equation}
Hence $\psi=\omega^{-1}$ and $\omega$ is a Lie superalgebra isomorphism, so is $\psi$.
\end{proof}

For any $\lambda\in \bC^m$, let $\sigma_{\lambda}$ be the automorphism of the associative superalgebra $K_{m,n}$ with $\sigma_{\lambda}(d_i)=d_i+\lambda_i, \sigma_{\lambda}(\frac{\partial}{\partial \xi_j})=\frac{\partial}{\partial \xi_j}, \sigma_{\lambda}|_{A}={\rm id}_{A}$. Denote $A(\lambda):=A^{\sigma_{\lambda}}$. It is clear that $A(\lambda)\cong K_{m,n}/\mathcal{I}_{\lambda}$, where $\mathcal{I}_{\lambda}$ is the left ideal of $K_{m,n}$ generated by $d_i-\lambda_i, \frac{\partial}{\partial \xi_j},i=1,\ldots,m;j=1,\ldots,n$. 

\begin{lemma}\label{lem3.5} {\rm (1).}  $A(\lambda)$ is a strictly simple $K_{m,n}$ module;

{\rm (2).}  Any simple weight $K_{m,n}$ module is isomorphic to some $A(\lambda)$ for some $\lambda\in \bC^m$ up to a parity-change.
\end{lemma}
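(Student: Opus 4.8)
The plan is to exploit the description $A(\lambda) \cong K_{m,n}/\mathcal{I}_\lambda$ together with the standard structure of the Weyl superalgebra $K_{m,n}$. For part (1), I would argue that $A(\lambda)$ has $A_{m,n}$ acting as multiplication operators, $d_i$ acting as $t_i\partial/\partial t_i + \lambda_i$, and $\partial/\partial\xi_j$ acting by super-derivation; concretely $A(\lambda) = \bigoplus_{\alpha,I} \bC\, t^\alpha \xi_I$ with $d_i$ acting on $t^\alpha\xi_I$ by the scalar $\alpha_i + \lambda_i$. Given any nonzero $K_{m,n}$-invariant subspace $N$ (not assumed $\Z_2$-graded), take a nonzero $v \in N$; apply suitable products of $t_j\partial/\partial t_j$ and $\xi_j \partial/\partial\xi_j$ — which are \emph{commuting} operators with distinct eigenvalues on the monomial basis, since the $t_i\partial/\partial t_i$-eigenvalue $\alpha_i$ distinguishes $t^\alpha$ and the $\xi_j\partial/\partial\xi_j$-eigenvalue distinguishes which $\xi_j$ occur — to project $v$ onto a single monomial $t^\alpha\xi_I \in N$. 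Then hit it with $t^{-\alpha}$ and with $\partial/\partial\xi_{l}$'s to reach $t^0\xi_\varnothing = 1 \in N$, and multiply by arbitrary $t^\beta\xi_J$ to conclude $N = A(\lambda)$. The one point to be careful about is the Vandermonde/separating-eigenvalues step: I want a single element of $K_{m,n}$ (or a finite sum) that picks out one monomial from a finite linear combination, which follows because the joint spectrum of the commuting family $\{t_i\partial/\partial t_i\}_i \cup \{\xi_j\partial/\partial\xi_j\}_j$ is multiplicity-free on $A(\lambda)$.

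For part (2), let $V$ be a simple weight $K_{m,n}$-module, meaning $D_m = \mathrm{span}\{d_i\}$ acts diagonalizably. Since the operators $d_i$ pairwise commute and $[d_i, a] = $ (a polynomial relation making $A$-action shift weights by $\Z^m$) and $[d_i, \partial/\partial\xi_j] = 0$, the support of $V$ lies in a single coset $\lambda + \Z^m$. Pick $\lambda$ in the support; I claim the weight space $V_\lambda$ contains a vector killed by all $\partial/\partial\xi_j$: indeed the $\partial/\partial\xi_j$ are $n$ pairwise anticommuting nilpotent (square-zero) odd operators preserving $V_\lambda$ (they commute with $d_i$), so by a standard argument (descending through $\ker(\partial/\partial\xi_n) \cap V_\lambda$, then intersecting with $\ker(\partial/\partial\xi_{n-1})$, etc., each nonzero because $\partial/\partial\xi_j$ is nilpotent on the finite-type pieces — or more cleanly, the subalgebra they generate is the finite-dimensional exterior-derivative algebra $\Lambda^*$, which being local has every module containing a vector annihilated by its augmentation ideal) there is $0 \neq w \in V_\lambda$ with $\partial w/\partial\xi_j = 0$ for all $j$ and $d_i w = \lambda_i w$. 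Then $w$ generates a $K_{m,n}$-quotient of $K_{m,n}/\mathcal{I}_\lambda \cong A(\lambda)$, and since $A(\lambda)$ is strictly simple by (1), the map $A(\lambda) \to K_{m,n}w \subseteq V$ is injective; by simplicity of $V$ it is onto, so $V \cong A(\lambda)$ as $K_{m,n}$-modules, up to a parity shift depending on $|w|$.

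The main obstacle I anticipate is part (2), specifically producing the highest-weight-type vector $w$ annihilated by every $\partial/\partial\xi_j$ while staying in a \emph{nonzero} weight space — one must handle the interaction between the odd operators $\partial/\partial\xi_j$ and the weight decomposition, and rule out that iterating the kernels collapses to $0$. I would phrase this via the local finite-dimensional algebra $\bC[\partial/\partial\xi_1,\dots,\partial/\partial\xi_n]/(\text{squares, anticommutators}) \cong \Lambda(\bC^n)$ acting on $V_\lambda$: a nonzero module over a finite-dimensional local (super)algebra always has a nonzero socle vector killed by the (nilpotent) augmentation ideal, which is exactly what is needed. Everything else — the support lying in one coset, strict simplicity forcing injectivity, and the parity bookkeeping (using $\Pi$ as in the earlier notation) — is routine given Lemma \ref{lem2.2} and the identification $A(\lambda) \cong K_{m,n}/\mathcal{I}_\lambda$ established just before the statement.
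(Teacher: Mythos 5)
Your proposal is correct and follows essentially the same route as the paper: part (1) by directly exhibiting strict simplicity of $A(\lambda)$ (which the paper dismisses as ``easy to see''), and part (2) by locating a nonzero homogeneous vector in $V_\lambda$ killed by all $\frac{\partial}{\partial\xi_j}$ via nilpotency of these operators on the finite-dimensional space $\bC[\frac{\partial}{\partial\xi_1},\dots,\frac{\partial}{\partial\xi_n}]v$, then identifying $V$ with a simple quotient of $K_{m,n}/\mathcal{I}_\lambda\cong A(\lambda)$ up to parity. The anticipated ``obstacle'' in part (2) is handled exactly as the paper does it, so there is nothing to add.
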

\begin{proof} It is easy to see that $A(0)$ hence $A(\lambda)$ is a strictly simple $K_{m,n}$ module.  Now let $V$ be any simple weight $K_{m,n}$ module with $\lambda\in {\rm supp}(V)$. Fix a nonzero homogeneous element $v\in V_{\lambda}$. Since $V'=\bC[\frac{\partial}{\partial \xi_1},\ldots,\frac{\partial}{\partial \xi_n}]v$ is a finite-dimensional supersubspace with  $\frac{\partial}{\partial \xi_1},\ldots,\frac{\partial}{\partial \xi_n}$ acting nilpotently, we may find a nonzero homogeneous element $v'\in  V'$ with $\mathcal{I}_{\lambda} v'=0$. Then up to a parity-change $V=K_{m,n}v'$ is isomorphic to a simple quotient of $A(\lambda)\cong K_{m,n}/\mathcal{I}_{\lambda}$. That is $V\cong A(\lambda)$. \end{proof}

Now for any $\mathfrak{m}\Delta$ module $V$, we have the $AW$ module  $\Gamma(\lambda,V ):=(A_{m,n}(\lambda)\otimes V)^{\phi_1}$, where  $\phi_1:\bar{U}\stackrel{\eta}{\longrightarrow} K_{m,n}\otimes U(\mathcal{T}) \stackrel{{\rm id}\otimes \psi }{\longrightarrow} K_{m,n}\otimes U(\mathfrak{m}\Delta)$. More precisely, $\Gamma(\lambda,V)=A_{m,n}\otimes V$ with actions
\begin{equation}\label{eq3.7}\begin{split}&t^{\beta}\xi_Id_i \cdot (y\otimes v)=\sum_{J\subsetneq I} (-1)^{\tau(J,I\setminus J)+|I\setminus J||y|}t^{\beta}\xi_Jy\otimes t^{\beta}\xi_{I\setminus J}d_i v+t^{\beta}\xi_I y\otimes (t^{\beta}-1)d_i  v\\&+(t^{\beta}\xi_I(d_i+\lambda_i)(y))\otimes v;\\
&t^{\beta}\xi_I\frac{\partial}{\partial \xi_j} \cdot (y\otimes v)=\sum_{J\subsetneq I} (-1)^{\tau(J,I\setminus J)+(|I\setminus J|+1)|y|}t^{\beta}\xi_Jy\otimes t^{\beta}\xi_{I\setminus J}\frac{\partial y}{\partial \xi_j} v\\
&+(-1)^{|y|}t^{\beta}\xi_I y\otimes (t^{\beta}-1)\frac{\partial}{\partial \xi_j}  v+t^{\beta}\xi_I\cdot \frac{\partial y}{\partial \xi_j} \otimes v;\\
&x\cdot (y\otimes v)=xy\otimes v,\forall x,y\in A, v\in V.
\end{split}
\end{equation}

\begin{lemma}\label{lem3.6} {\rm (1).} For any $\lambda\in \bC^m$ and any simple $\mathfrak{m}\Delta$ module $V$, $\Gamma(\lambda, V)$ is a simple weight $AW_{m,n}$ module.

{\rm (2).} Let $M$ be any simple weight $AW_{m,n}$ module with $\lambda\in {\rm supp}(M)$. There exists a simple $\mathfrak{m}\Delta$ module $V$ such that $M\cong \Gamma(\lambda, V)$. \end{lemma}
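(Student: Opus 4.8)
The plan is to leverage Lemma~\ref{lem3.3} and Theorem~\ref{the3.4} to transport everything into the tensor-product picture $K_{m,n}\otimes U(\mathfrak m\Delta)$, and then apply the strictly-simple tensor-module machinery of Lemma~\ref{lem2.2}. Concretely, recall that an $AW_{m,n}$ module is the same thing as a $\bar U$ module on which $A$ acts associatively with $t^0$ acting as the identity; by definition of $\mathcal J$ and the identification $\bar U=U(\tW)/\mathcal J$, simple weight $AW_{m,n}$ modules are precisely the simple weight $\bar U$ modules. Via the algebra isomorphism $\phi_1=(\mathrm{id}\otimes\psi)\circ\eta:\bar U\xrightarrow{\sim}K_{m,n}\otimes U(\mathfrak m\Delta)$ of Lemma~\ref{lem3.3} and Theorem~\ref{the3.4}, these correspond bijectively (and compatibly with the $D_m$-weight grading, since $\phi_1$ preserves the relevant $\Z^m$-grading) to simple weight $K_{m,n}\otimes U(\mathfrak m\Delta)$ modules. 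The module $\Gamma(\lambda,V)$ is by construction just $(A_{m,n}(\lambda)\otimes V)^{\phi_1}$, i.e.\ the pullback along $\phi_1$ of the outer tensor product of the $K_{m,n}$ module $A_{m,n}(\lambda)$ with the $U(\mathfrak m\Delta)$ module $V$.

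For part (1): by Lemma~\ref{lem3.5}(1), $A_{m,n}(\lambda)$ is a \emph{strictly simple} $K_{m,n}$ module. Since $\mathfrak m\Delta$ has a countable basis, $U(\mathfrak m\Delta)$ has a countable basis, so Lemma~\ref{lem2.2}(3) applies with $B'=K_{m,n}$, $M'=A_{m,n}(\lambda)$, $B=U(\mathfrak m\Delta)$, $M=V$ (after invoking Lemma~\ref{lem2.1} to swap the roles so the strictly simple factor sits in the position required by Lemma~\ref{lem2.2}, or simply noting that the argument of Lemma~\ref{lem2.2} is symmetric in which tensor factor carries the strictly simple module). It gives that $V\otimes A_{m,n}(\lambda)$, hence $\Gamma(\lambda,V)$, is simple as a $K_{m,n}\otimes U(\mathfrak m\Delta)$ module, hence as a $\bar U$ module, hence as an $AW_{m,n}$ module, whenever $V$ is a simple $\mathfrak m\Delta$ module. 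That it is a weight module and that $\lambda\in\mathrm{Supp}(\Gamma(\lambda,V))$ is immediate from the explicit formulas (\ref{eq3.7}): $d_i$ acts on $y\otimes v$ with the eigenvalue coming from the $t$-degree of $y$ shifted by $\lambda_i$ (plus the action of the degree-zero part of $\mathfrak m\Delta$, which preserves weights), so $\mathrm{Supp}(\Gamma(\lambda,V))\subseteq \lambda+\Z^m$ and the degree-zero line $t^0\otimes v$ contributes weight $\lambda$.

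For part (2): let $M$ be a simple weight $AW_{m,n}$ module with $\lambda\in\mathrm{Supp}(M)$; view it as a simple weight $K_{m,n}\otimes U(\mathfrak m\Delta)$ module via $\phi_1^{-1}$. Fix a nonzero homogeneous $v\in M_\lambda$. The subspace $\bC[\tfrac{\partial}{\partial\xi_1},\dots,\tfrac{\partial}{\partial\xi_n}]\,v$ is finite-dimensional with the $\tfrac{\partial}{\partial\xi_j}$ acting nilpotently (they act within the fixed weight space $M_\lambda$, and $(\tfrac{\partial}{\partial\xi_j})^2=0$), so exactly as in the proof of Lemma~\ref{lem3.5} one finds a nonzero homogeneous $v'$ in it killed by the left ideal $\mathcal I_\lambda$ of $K_{m,n}$; thus $K_{m,n}v'$ is a quotient of $A_{m,n}(\lambda)\cong K_{m,n}/\mathcal I_\lambda$, and since $A_{m,n}(\lambda)$ is strictly simple, $K_{m,n}v'\cong A_{m,n}(\lambda)$ is a strictly simple $\bC\otimes K_{m,n}$ submodule of $M$. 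Now apply Lemma~\ref{lem2.2}(4) (again with the strictly simple factor in the appropriate slot, mediated by Lemma~\ref{lem2.1}): $M\cong V\otimes A_{m,n}(\lambda)$ for some simple $U(\mathfrak m\Delta)$ module $V$, i.e.\ $M\cong\Gamma(\lambda,V)$ for a simple $\mathfrak m\Delta$ module $V$.

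I expect the main obstacle to be purely bookkeeping rather than conceptual: one must check carefully that $\phi_1$ is an isomorphism of $\Z^m$-graded (weight) algebras so that ``simple weight $\bar U$ module'' and ``simple weight $K_{m,n}\otimes U(\mathfrak m\Delta)$ module'' correspond, and one must be scrupulous about the $\Z_2$-parity signs and about which tensor factor Lemma~\ref{lem2.2} wants the strictly simple module in --- this is precisely what Lemma~\ref{lem2.1} is there to absorb, but it needs to be cited at the right moments. Verifying $\mathcal I_\lambda v'=0$ for a suitable $v'$ (the nilpotency argument for the odd partials) is the only genuinely module-theoretic input beyond the cited lemmas, and it is short.
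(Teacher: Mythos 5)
Your proposal is correct and follows essentially the same route as the paper: transport along the isomorphism $\phi_1$ of Lemma \ref{lem3.3} and Theorem \ref{the3.4}, use the strict simplicity of $A(\lambda)$ from Lemma \ref{lem3.5} together with Lemma \ref{lem2.2}(3) for part (1), and locate a vector annihilated by $\mathcal{I}_\lambda$ and apply Lemma \ref{lem2.2}(4) for part (2). The only point the paper spells out slightly more explicitly is that $K_{m,n}v'$ may be $\Pi(A(\lambda))$ rather than $A(\lambda)$, which is absorbed by Lemma \ref{lem2.1} exactly as you anticipate in your closing remarks.
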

\begin{proof} From Lemma \ref{lem3.5} (1) and Lemma \ref{lem2.2} (3) , we know that $A(\lambda)\otimes V$ is a simple $K_{m,n}\otimes U(\mathfrak{m}\Delta)$ module for any $\lambda\in \bC^m$ and any simple $\mathfrak{m}\Delta$ module $V$. From the definition of $\Gamma(\lambda, V)$, we have (1).
Let $M$ be any simple weight $AW$ module with $\lambda\in {\rm supp}(V)$. Then $M^{\phi_1^{-1}}$ is a simple $K_{m,n}\otimes U(\mathfrak{m}\Delta)$ module. By a same argument as in the proof of Lemma \ref{lem3.5} (2), we may find a nonzero homogeneous $v'\in M_{\lambda}$ with $\mathcal{I}_{\lambda} v'=0$ and $K_{m,n}v'\cong A(\lambda)$ or $K_{m,n}v'\cong \Pi(A(\lambda))$. From Lemma \ref{lem2.2} (4), there exists a simple $U(\mathfrak{m}\Delta)$ module $P$ such that $M^{\phi_1^{-1}}\cong A(\lambda)\otimes P$ or $M^{\phi_1^{-1}}\cong  \Pi(A(\lambda))\otimes P\cong A(\lambda)\otimes \Pi(P^{T})$. Note that the last isomorphism is due to Lemma \ref{lem2.1}. Thus (2) follows.

 \end{proof}


\begin{lemma}\label{lem3.7}{\rm (1).} $\mathfrak{m}\Delta/\mathfrak{m}^2\Delta\cong \gl(m,n)$;

{\rm (2).} Let  $V$ be any finite-dimensional  $\mathfrak{m}\Delta$ module.  Then there exists some $k\in\N$ such that $\mathfrak{m}^k \Delta V=0$;

{\rm (3). } Let  $V$ be any finite-dimensional  simple $\mathfrak{m}\Delta$ module. Then we have $\mathfrak{m}^2\Delta V=0$. Therefore, $V$ can be regarded as a simple $\gl(m,n)$ module via the isomorphism in (1).\end{lemma}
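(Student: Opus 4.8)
The plan is to treat the three parts in order, since each feeds the next. For part (1), I would identify $\mathfrak{m}\Delta/\mathfrak{m}^2\Delta$ explicitly using the basis of $\mathfrak{m}\Delta$ listed just before Theorem \ref{the3.4}. Modulo $\mathfrak{m}^2\Delta$, the surviving basis elements are the classes of $(t_i-1)d_k$, $(t_i-1)\partial/\partial\xi_j$, $\xi_i d_k$ and $\xi_i \partial/\partial\xi_j$, giving a space of dimension $(m+n)^2$. I would then set up the linear isomorphism onto $\gl(m,n)$ sending these classes to the appropriate matrix units, ordering the coordinates as $(t_1,\dots,t_m,\xi_1,\dots,\xi_n)$: the even block $\gl(m)\oplus\gl(n)$ comes from $(t_i-1)d_k$ and $\xi_i\partial/\partial\xi_j$, and the odd off-diagonal blocks from $(t_i-1)\partial/\partial\xi_j$ and $\xi_i d_k$. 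One then checks that the bracket in $\mathfrak{m}\Delta/\mathfrak{m}^2\Delta$ matches the (super)commutator of matrix units; this is a direct computation using $[fd,gd']=f(d g)d'-(-1)^{\cdots}g(d'f)d+\ldots$ and discarding everything in $\mathfrak{m}^2\Delta$. I would present the bracket check only on representative pairs.

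For part (2), the key point is that $\mathfrak{m}\Delta$ is a filtered Lie superalgebra with $[\mathfrak{m}^i\Delta,\mathfrak{m}^j\Delta]\subseteq \mathfrak{m}^{i+j-1}\Delta$ — so $\mathfrak{m}\Delta$ is \emph{not} nilpotent, but the subalgebra $\mathfrak{m}^2\Delta$ together with the filtration by $\mathfrak{m}^i\Delta$ ($i\ge 2$) behaves like a pro-nilpotent object. Given a finite-dimensional module $V$, I would argue that the associated graded action factors through finitely many layers; more concretely, since $\dim V<\infty$, the descending chain $V\supseteq \mathfrak{m}\Delta V\supseteq (\mathfrak{m}\Delta)^2 V\supseteq\cdots$ of the \emph{associative} filtration (products of elements of $\mathfrak{m}\Delta$ inside $\mathrm{End}(V)$) cannot strictly decrease forever, but that alone does not give $\mathfrak{m}^k\Delta V=0$. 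Instead I would invoke Lemma \ref{exp}-style reasoning is not available here; the cleaner route is: the image of $U(\mathfrak{m}\Delta)$ in $\mathrm{End}(V)$ is a finite-dimensional associative algebra, and the image of $\mathfrak{m}^i\Delta$ for large $i$ lands in its Jacobson radical because generators $t^\alpha\xi_I d$ with $|\alpha|$ or $|I|$ large act nilpotently (as $\mathfrak{m}$ consists of functions vanishing at the point $(1,\dots,1,0,\dots,0)$, and high powers of the maximal ideal act by operators of large ``order''). Turning this into a bound $\mathfrak{m}^k\Delta V=0$ is the step I expect to require the most care: one needs that $\mathfrak{m}^{k}\Delta\subseteq U(\mathfrak{m}\Delta)\cdot \mathfrak{m}^{N}\Delta$ for controllable $N$, i.e. that the enveloping algebra does not let low-order elements generate high powers back — this follows from the filtration estimate $[\mathfrak{m}^i\Delta,\mathfrak{m}^j\Delta]\subseteq\mathfrak{m}^{i+j-1}\Delta$ iterated, so that a product of $p$ elements of $\mathfrak{m}\Delta$ with one of them in $\mathfrak{m}^{r}\Delta$ stays in $\mathfrak{m}^{r-p+1}\Delta$; choosing the filtration degrees large enough forces the annihilation.

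For part (3), I would combine (1) and (2). By (2), $\mathfrak{m}^k\Delta$ acts as zero on the finite-dimensional simple module $V$ for some $k\ge 2$. Then $V$ is a simple module over the finite-dimensional Lie superalgebra $\mathfrak{m}\Delta/\mathfrak{m}^k\Delta$, whose ideal $\mathfrak{m}^2\Delta/\mathfrak{m}^k\Delta$ is nilpotent (again by the filtration bound $[\mathfrak{m}^i\Delta,\mathfrak{m}^j\Delta]\subseteq\mathfrak{m}^{i+j-1}\Delta$, which for $i,j\ge 2$ raises the degree). A nilpotent ideal acting on a finite-dimensional simple module over a Lie superalgebra in characteristic zero acts by zero — this is the super-analogue of the standard fact, proved via a super Engel/Lie-type argument: the joint generalized zero-eigenspace (the set of vectors killed by a power of each element of the nilpotent ideal) is a nonzero submodule, hence all of $V$, and then one upgrades ``acts nilpotently'' to ``acts as zero'' using that the ideal is spanned by such elements together with simplicity. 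Therefore $\mathfrak{m}^2\Delta V=0$, and $V$ descends to a simple module over $\mathfrak{m}\Delta/\mathfrak{m}^2\Delta\cong\gl(m,n)$ by (1). The main obstacle throughout is bookkeeping with the non-standard filtration $[\mathfrak{m}^i\Delta,\mathfrak{m}^j\Delta]\subseteq\mathfrak{m}^{i+j-1}\Delta$ and making sure the ``high powers act nilpotently'' claim is justified from the concrete basis rather than asserted.
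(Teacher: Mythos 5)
Part (1) of your plan is fine and matches the paper. The problems are in (2) and (3). For (2), the step you yourself flag as delicate is in fact the whole content of the statement, and your proposed mechanism does not supply it: placing the image of $\mathfrak{m}^i\Delta$ in the Jacobson radical of the image of $U(\mathfrak{m}\Delta)$ requires first knowing that these elements act nilpotently (indeed, that they generate a nil ideal), and nothing in your outline proves nilpotency; the filtration estimate $[\mathfrak{m}^i\Delta,\mathfrak{m}^j\Delta]\subseteq\mathfrak{m}^{i+j-1}\Delta$ alone cannot do it, since the associated graded object has infinitely many positive layers and is not nilpotent. The paper's mechanism is different and essential: it introduces the Euler-type element $d=\sum_i(t_i-1)\frac{\partial}{\partial t_i}+\sum_j\xi_j\frac{\partial}{\partial\xi_j}$, restricts to the polynomial part $\mathfrak{m}^+\Delta'$ on which $\ad d$ is diagonalizable with the degree-$(l+1)$ piece having eigenvalue $l$, and observes that since $d$ has only finitely many generalized eigenvalues on $V$, any element shifting the spectrum by a large $l$ must kill $V$ (the relation $f(d-l)(xv)=xf(d)v=0$ with $f$ the characteristic polynomial). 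One must then pass from $(\mathfrak{m}^+)^{k-2}\Delta'$ back to the full Laurent algebra $\mathfrak{m}^k\Delta$ by explicit bracket computations; your plan omits this entirely, and note that $\ad d$ is \emph{not} diagonalizable on $\mathfrak{m}\Delta$ itself, so one cannot run the eigenvalue argument there directly.

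For (3), the assertion that a nilpotent ideal of a finite-dimensional Lie superalgebra acts by zero on any finite-dimensional simple module is false: a one-dimensional abelian Lie algebra is a nilpotent ideal of itself and acts by an arbitrary nonzero scalar on a one-dimensional simple module (likewise the center of $\gl(n)$ acts by nonzero scalars on most simple modules). Your "joint generalized zero-eigenspace" argument silently presupposes that every element of the ideal acts nilpotently on $V$, which is exactly what fails in these examples and is not established by the nilpotency of the ideal. The paper avoids this by again exploiting $d$: on the simple quotient $\mathfrak{m}\Delta/\mathfrak{m}^k\Delta$ the element $d$ is diagonalizable on $V$, the eigenvalues of $d$ on $(\mathfrak{m}^+)^2\Delta'V$ lie in $\{\lambda_1,\dots,\lambda_s\}+\N$, hence $(\mathfrak{m}^+)^2\Delta'V$ is a proper submodule and must vanish by simplicity. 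To repair your write-up you would need either this spectral argument or a proof that $\mathfrak{m}^2\Delta\subseteq[\mathfrak{m}\Delta,\mathfrak{m}\Delta]+\mathfrak{m}^k\Delta$ together with a (super-)Lie-theorem-type argument, neither of which is present.
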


\begin{proof} It is easy to verify that the linear map $\pi: \mathfrak{m}\Delta/\mathfrak{m}^2\Delta\rightarrow \gl(m,n)$ defined by \begin{align*}&\pi((t_i-1)\frac{\partial}{\partial t_j}+\mathfrak{m}^2\Delta)=E_{i,j}, \pi(\xi_s\frac{\partial}{\partial \xi_j}+\mathfrak{m}^2\Delta)=E_{m+s,m+j}, \\& \pi((t_i-1)\frac{\partial}{\partial \xi_j}+\mathfrak{m}^2\Delta)=E_{i,m+j}, \pi(\xi_j\frac{\partial}{\partial t_i}+\mathfrak{m}^2\Delta)=E_{m+j,i}.\end{align*} is a Lie superalgebra isomorphism. So we have (1).

 Let  $V$ be any finite-dimensional  $\mathfrak{m}\Delta$ module. Let $\Delta'={\rm span}\{\frac{\partial}{\partial t_i}, \frac{\partial}{\partial \xi_j}|  i=1,2,\ldots,m; j=1,2,\ldots,n\}$. Let $A^+=\bC[t_1,\ldots,t_m,\xi_1,\ldots,\xi_n]$, $d=\sum_{i=1}^{m}(t_i-1)\frac{\partial}{\partial t_i}+\sum_{j=1}^{n} \xi_j\frac{\partial}{\partial \xi_j}$ and $\mathfrak{m}^+=\mathfrak{m}\cap A^+$. Then $\mathfrak{m}^+=\oplus_{i=1}^{+\infty}\mathfrak{m}^+_i$ with $\mathfrak{m}^+_k=\{x\in \mathfrak{m}^+|[d,x]=kx\}={\rm span}\{(t_1-1)^{p_1}\cdots (t_m-1)^{p_m}\xi_I \in A^+|p_1+\cdots+p_m+|I|=k,p_1,\ldots,\p_m\in \Z_+\}.$ And $\mathfrak{m}^+\Delta'$ is a Lie supersubalgebra of $W=A\Delta'$.
 Let $f(\lambda)=\Pi_{i=1}^s(\lambda-\lambda_i)^{k_i}$ be the characteristic polynomial of $d$ as an operator on $V$.  Then there exists some integer number $k>3$ such that $(f(\lambda-l), f(\lambda))=1$ if $l\ge k-3$. From $f(d-l) (x v)=xf(d)v=0,\forall x\in \mathfrak{m}^+_{l+1}\Delta', v \in V$, we have $xv=0$. That is $(\mathfrak{m}^+)^{k-2} \Delta'V=0$. Let $\mathfrak{a}$ be the ideal of $\mathfrak{m}\Delta'$ generated by $(\mathfrak{m}^+)^{k-2}\Delta'$. Then $\mathfrak{a} V=0$. From $[yt^{\beta}d, xd]-[t^{\beta}d, yxd]=[y,xd]t^\beta d+y[t^{\beta}d,xd]-[t^{\beta}d,y]xd-y[t^{\beta}d,xd]=-2t^{\beta}yxd\in \mathfrak{a},\forall x\in  (\mathfrak{m}^+)^{k-2},y\in \mathfrak{m}^+_1,\beta\in\Z^m $ we have $\mathfrak{m}^{k-1}d\subseteq\mathfrak{a}$. So   $yx\partial=[yd, x\partial]+(-1)^{|x\partial||y|}x[\partial,y]d\in \mathfrak{a},\forall x\in \mathfrak{m}_1^+, y\in \mathfrak{m}^{k-1},\partial\in \Delta'$. Thus $\mathfrak{m}^k\Delta' \subseteq  \mathfrak{a}$, which implies (2).

 Now suppose $V$ is a finite-dimensional  simple $\mathfrak{m}\Delta$ module. From (2), $V$ is also a simple module over $\mathfrak{m}^+\Delta'/(\mathfrak{m}^+)^k\Delta'\cong \mathfrak{m}\Delta'/\mathfrak{m}^k\Delta'=\mathfrak{m}\Delta/\mathfrak{m}^k\Delta$ for some $k$.  So $d$ is diagonalizable on $V$. Let $\{\lambda_1,\ldots,\lambda_s\}$ be all eigenvalues of $d$ on $V$. Then all  eigenvalues of $d$ on $(\mathfrak{m}^+)^2\Delta'V$ is contained in $\{\lambda_1,\ldots,\lambda_s\}+\N$. Thus $(\mathfrak{m}^+)^2\Delta'V\ne V$.   From the simplicity of $V$ we have $(\mathfrak{m}^+)^2\Delta'V=0$. Hence $\mathfrak{m}^2\Delta V=0$.  So we have (3) hold.
\end{proof}

Now we have the Lie superalgebra homomorphism $\bar{\psi}:  \mathfrak{m}\Delta\rightarrow \mathfrak{m}\Delta/\mathfrak{m}^2\Delta\rightarrow  \gl(m,n)$ with
\begin{equation}\begin{split}&\bar{\psi}((t^{\alpha}-1)d_i)=\sum_{s=1}^m \alpha_s E_{si}, \bar{\psi}(t^{\alpha}\xi_Id_i)
=\left\{\begin{aligned}&E_{m+j,i}, &I=\{j\};\\
&0, &|I|>1,\end{aligned}\right.\\
 &\bar{\psi}(t^{\alpha}-1)\frac{\partial}{\partial \xi_j})=\sum_{s=1}^m \alpha_sE_{s,m+j},\bar{\psi}(t^{\alpha}\xi_I\frac{\partial}{\partial \xi_j})=\left\{\begin{aligned}&E_{m+s,m+j}, &I=\{s\};\\
&0, &|I|>1.\end{aligned}\right. \end{split}\end{equation}

We therefore have the associative superalgebra homomorphism $\phi: \bar{U}\rightarrow K_{m,n}\otimes U(\mathfrak{m}\Delta)\rightarrow K_{m,n}\otimes U(\gl(m,n))$ with
\begin{align*}&\phi(t^{\alpha}\xi_{I})=t^{\alpha}\xi_{I}\otimes 1,\\
& \phi(t^{\alpha}\xi_{I}d_i)=\sum_{s=1}^n (-1)^{|I|-1}\frac{\partial}{\partial \xi_s}(t^{\alpha}\xi_{I})\otimes E_{m+s,i}+\sum_{s=1}^m d_s(t^{\alpha}\xi_{I})\otimes E_{s,i}+(t^{\alpha}\xi_{I}\cdot d_i)\otimes 1,\\ &\phi(t^{\alpha}\xi_{I}\frac{\partial}{\partial \xi_j})=\sum_{s=1}^n (-1)^{|I|-1}\frac{\partial}{\partial \xi_s}(t^{\alpha}\xi_{I})\otimes E_{m+s,m+j}+\sum_{s=1}^m d_s(t^{\alpha}\xi_{I})\otimes E_{s,m+j}+(t^{\alpha}\xi_{I}\cdot \frac{\partial}{\partial \xi_j})\otimes 1.\end{align*}

For any  $\gl(m,n)$ module $V$, we have the $AW$ module $\Gamma(\lambda,V):=(A(\lambda)\otimes V)^{\phi}$, which will be  called a {\it tensor} module or {\it Shen-Larsson} module. More precisely, $\Gamma(\lambda,V)=A_{m,n}\otimes V$ with actions
\begin{equation}\label{eq3.9}\begin{split}&xd_i \cdot(y\otimes v)=\sum_{s=1}^n (-1)^{|x|-1+|y|}\frac{\partial x}{\partial \xi_s}y \otimes E_{m+s,i}v+ \sum_{s=1}^m d_s(x)y\otimes E_{s,i}v+x(d_i(y)+\lambda_iy)\otimes v;\\
&x\frac{\partial}{\partial \xi_j} \cdot (y\otimes v)=\sum_{s=1}^n (-1)^{|x|-1}\frac{\partial x}{\partial {\xi_s}}y\otimes E_{m+s,m+j}v+\sum_{s=1}^m (-1)^{|y|}d_s(x)y\otimes E_{s,m+j}v +x\frac{\partial y}{\partial \xi_j}\otimes v;\\
&x\cdot (y\otimes v)=xy\otimes v,\forall x,y\in A_{m,n}, v\in V.
\end{split}
\end{equation}

Recall that the finite-dimensional simple $\gl(m,n)$ modules were classified in \cite{K}. We classify the simple cuspidal $AW$ module in the following lemma.

\begin{lemma}\label{lem3.8} For any  simple cuspidal $AW_{m,n}$ module $M$, there exists some finite-dimensional simple $\gl(m,n)$ module $V$ and $\lambda\in \bC^m$ such that $M\cong \Gamma(\lambda, V)$. \end{lemma}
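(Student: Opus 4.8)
The plan is to combine the structure theory established in Lemmas~\ref{lem3.5}, \ref{lem3.6}, and \ref{lem3.7} with a cuspidality-is-finite-multiplicity argument. First I would invoke Lemma~\ref{lem3.6}(2): since $M$ is a simple weight $AW_{m,n}$ module and cuspidal modules are nonzero, pick $\lambda\in\supp(M)$ and obtain a simple $\mathfrak{m}\Delta$ module $V$ with $M\cong\Gamma(\lambda,V)$, where here $\Gamma(\lambda,V)=(A(\lambda)\otimes V)^{\phi_1}=A_{m,n}\otimes V$ as a vector superspace (the $\mathfrak{m}\Delta$-version of $\Gamma$, before passing to $\gl(m,n)$).

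The key step is to show that $V$ is finite-dimensional. This follows from cuspidality: in the realization $\Gamma(\lambda,V)=A_{m,n}\otimes V$ the weight space of weight $\lambda+\beta$ is exactly $t^{\beta}(\text{fin. dim. exterior part})\otimes V$, more precisely $\bigoplus_{I}\bC t^{\beta}\xi_I\otimes V$, whose dimension is $2^n\dim V$. (One sees from the formulas (\ref{eq3.7}) that the $A_{m,n}$-action is by left multiplication, so $A_{m,n}\otimes V$ is $A_{m,n}$-free and the weight grading comes entirely from the $t^{\beta}$, with $V$ itself concentrated in a single weight relative to the $D_m$-action built into $\sigma_\lambda$.) Since $M$ is cuspidal, $\dim M_{\lambda+\beta}$ is bounded, hence $\dim V<\infty$. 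Then Lemma~\ref{lem3.7}(3) gives $\mathfrak{m}^2\Delta\,V=0$, so $V$ descends to a (finite-dimensional) simple module over $\mathfrak{m}\Delta/\mathfrak{m}^2\Delta\cong\gl(m,n)$ via Lemma~\ref{lem3.7}(1). Under this identification the map $\phi_1$ followed by the projection $\mathfrak{m}\Delta\to\gl(m,n)$ is precisely the map $\phi$ defined just before the lemma, so the $\mathfrak{m}\Delta$-module structure on $\Gamma(\lambda,V)$ coincides with the $\gl(m,n)$-tensor-module structure of (\ref{eq3.9}). Therefore $M\cong\Gamma(\lambda,V)$ with $V$ a finite-dimensional simple $\gl(m,n)$ module, as desired.

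The main obstacle I anticipate is the bookkeeping needed to justify that the weight spaces of $\Gamma(\lambda,V)$ really have dimension $2^n\dim V$ uniformly — i.e., confirming that the $D_m$-action on $A(\lambda)\otimes V$ is semisimple with the stated finite-dimensional eigenspaces and that $V$ occupies a single $D_m$-weight. This requires tracking how $d_i$ acts through $\sigma_\lambda$ on $A(\lambda)=A^{\sigma_\lambda}$ (where $d_i$ acts as $t_i\partial/\partial t_i+\lambda_i$) versus its (trivial, since $\mathfrak{m}\Delta$ does not contain $D_m$) action on $V$, and checking the compatibility of the coproduct-type formulas (\ref{eq3.7}). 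Once this is in place, everything else is a direct appeal to the already-established lemmas; no new hard input is needed beyond Lemma~\ref{lem3.7}(3), which is where the genuine representation-theoretic content (finite-dimensional simple modules of the Cartan-type nilpotent are $\gl(m,n)$-modules) resides.
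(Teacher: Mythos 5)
Your proposal is correct and follows essentially the same route as the paper: Lemma~\ref{lem3.6}(2) to realize $M$ as $\Gamma(\lambda,V)$ for a simple $\mathfrak{m}\Delta$ module $V$, the observation that the weight spaces of $A_{m,n}\otimes V$ have dimension $2^n\dim V$ so cuspidality forces $\dim V<\infty$, and then Lemma~\ref{lem3.7}(3) to descend $V$ to $\gl(m,n)$. The paper's proof is just a terser version of the same argument, leaving the weight-space computation implicit.
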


\begin{proof}Let $M$ be a   simple cuspidal $AW_{m,n}$ module. Then from Lemma \ref{lem3.6} (2),  $M\cong \Gamma(\lambda, V)$ for some $\lambda\in \bC^m$ and a simple $\mathfrak{m}\Delta$ module $V$. Since $M$ is cuspidal, we know that $V$ is a finite-dimensional $\mathfrak{m}\Delta$ module. Thus from Lemma \ref{lem3.7} (3), $V$ can be regarded as a simple $\gl(m,n)$ module. So we have proved the lemma.\end{proof}

Next we are going to define the $A$-cover $\hat{M}$ of a cuspida $W$ module $M$.

Consider $W$ as the adjoint $W$ module. We can make the tensor product $W$ module
 $W\otimes M$ into an $AW$ module by defining
$$ x \cdot (y \otimes v)=(xy)\otimes v, \forall x\in A, y\in W, v\in M. $$

Denote $K(M)=\{ \sum_{i=1}^k x_i\otimes v_i\in W\otimes M|  \sum_{i=1}^k(ax_i) v_i=0,\forall a\in A\}$. Then it is easy to see
$K(M)$ is a $AW$ submodule of $W\otimes M$.  Then we have the $AW$ module  $\hat{M}=(W\otimes M)/K(M)$. As in \cite{BF1},  we call $\hat{M}$ {\it the cover of} $M$ if $WM=M$.

Clearly,  the linear map
\begin{equation}\label{cover}\begin{split}
\pi: \,\,\hat{M}  &\to\ \ WM,\\
         w\otimes y+K(M)\ & \mapsto\ \  wy,\quad \forall\ w\in W, y\in M
\end{split}\end{equation}
is a $W$ module epimorphism.

\begin{lemma}\label{lem3.9}For any  cuspidal $W_{m,n}$ module $M$, there exists some $l_0\in \N$ such that
$\sum_{i=0}^{l_0}(-1)^i \binom {l_0} i t^{\alpha+i\gamma} \xi_I \partial \cdot t^{\beta-i\gamma} d_\mu v=0,\,\forall\,\partial\in \Delta, I\subseteq \{1,\ldots,n\}, v\in M,\alpha,\beta,\gamma\in \Z^m,\mu\in \bC^m$.
 \end{lemma}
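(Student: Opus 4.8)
The statement is a ``cuspidality forces exp-polynomial annihilation'' result, of exactly the type used in the $W_{m,0}$ theory of Billig--Futorny and in \cite{MZ}. The strategy is to reduce the identity to a statement about a finitely generated (indeed exp-polynomial) module over a suitable $\Z$-graded exp-polynomial Lie superalgebra, and then apply Lemma \ref{exp}. First I would fix $\gamma\in\Z^m$ and, for the chosen $\partial\in\Delta$ and $I\subseteq\{1,\dots,n\}$, consider the abelian (or nilpotent) subalgebra spanned by the elements $t^{k\gamma}\xi_I\partial$, $k\in\Z$, acting on the weight module $M$. Because $M$ is cuspidal, any weight space $M_{\lambda+j\gamma}$ together with the finitely many weight spaces reachable by adding a bounded number of copies of $\gamma$ spans a space of dimension bounded by the cuspidality constant $N$; hence on the finite-dimensional space $\bigoplus_{j} M_{\mu+j\gamma}$ (with $\mu$ ranging over a single $\Z\gamma$-coset, but only finitely many $j$ matter after we localize) the operators obtained from $t^{k\gamma}\xi_I\partial$ live in a space of bounded dimension.

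\textbf{Key steps.} (1) Reduce to a one-parameter family: set $g_k := t^{\alpha+k\gamma}\xi_I\partial$ and $h_k := t^{\beta-k\gamma}d_\mu$, both indexed by $k\in\Z$, and observe that the claimed identity is $\sum_{i=0}^{l_0}(-1)^i\binom{l_0}{i} g_i h_i v = 0$, i.e. an $l_0$-th finite difference in the ``diagonal'' index. (2) Restrict attention to the solenoidal-type subalgebra $\Vir[\mu]$ (or its super-analogue) together with the $A$-part generated by $t^{\pm\gamma}$, and the odd/even derivation $\xi_I\partial$; this is a $\Z$-graded exp-polynomial Lie superalgebra in the sense recalled in Section 2, with the grading given by the $\gamma$-degree, and $M$ restricted to it is an exp-polynomial module with finitely many generating weight vectors in each $\Z\gamma$-coset because it is cuspidal. (3) The operators $g_k$ and $h_k$ act on this finite-dimensional ``coset space'' by matrices whose entries are exp-polynomial in $k$; a product $g_k h_k$ then has entries exp-polynomial in $k$ of bounded degree (degree bounded in terms of $N$ and $m$), and an exp-polynomial function of bounded degree is killed by a high enough finite-difference operator $\sum_{i}(-1)^i\binom{l_0}{i}(\,\cdot\,)_i$. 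Choosing $l_0$ larger than all these degrees simultaneously (this is possible because $N$, hence the matrix sizes, is uniform over all of $\supp(M)$) gives the vanishing. (4) Finally, check that $l_0$ can be chosen independently of $\alpha,\beta,\mu,\gamma,\partial,I$: this follows because conjugating by $t^{\alpha}$, $t^{\beta}$ only shifts the coset and because the number of exp-polynomial ``modes'' and the cuspidality bound $N$ do not depend on these choices; one may also invoke Lemma \ref{exp} directly to get uniform finite-dimensionality of the relevant graded pieces.

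\textbf{Main obstacle.} The delicate point is step (3)--(4): making precise that the matrix coefficients of $g_k h_k$ on the finite-dimensional coset space really are exp-polynomial in $k$ of degree bounded uniformly in all the parameters. This requires choosing the right ambient exp-polynomial Lie superalgebra (large enough to contain all the $g_k$, $h_k$ but still $\Z$-extragraded so that Lemma \ref{exp} applies) and tracking that the ``weight multiplicity $\le N$'' hypothesis translates into a bound on the number of exp-polynomial modes that is independent of the coset. Once that bookkeeping is set up, the annihilation is the standard fact that the $l_0$-th difference of an exp-polynomial of degree $<l_0$ vanishes, applied entrywise. I would therefore organize the write-up around: (a) identifying the exp-polynomial subalgebra and module, (b) citing Lemma \ref{exp} to bound dimensions uniformly, (c) the elementary finite-difference lemma, and (d) assembling these with a uniform choice of $l_0$.
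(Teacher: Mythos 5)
Your overall strategy --- reduce the identity to a finite-difference statement and argue that the relevant matrix coefficients depend tamely on the index $i$ --- has a genuine gap at its core, and it is not the route the paper takes. First, the elementary fact you invoke in step (3) is wrong as stated: the operator $\sum_{i=0}^{l_0}(-1)^i\binom{l_0}{i}f(i)$ annihilates polynomials of degree $<l_0$, but it does \emph{not} annihilate exp-polynomials $a^i p(i)$ with $a\neq 1$; for those one needs the difference operator $(\sigma-a)^{l_0}$ rather than $(\sigma-1)^{l_0}$. So even granting the rest of step (3) the conclusion would not follow. Second, and more seriously, the assertion that cuspidality forces the matrix entries of $g_ih_i$ on a coset space to be polynomial (or exp-polynomial of uniformly bounded degree) in $i$ is precisely the hard content of the Billig--Futorny theory; it does not follow from the bound $\dim M_\lambda\le N$ alone, and Lemma \ref{exp} cannot supply it --- that lemma runs in the opposite direction (exp-polynomial algebra and module data imply finite-dimensional graded pieces of an induced module), it does not show that a cuspidal module is exp-polynomial. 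As written, your argument assumes the key input in disguise.

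The paper's proof instead takes as its sole external input the Billig--Futorny result for the purely even part: for a cuspidal module there is $l_0$ such that $\Omega^{(l)}_{\alpha,\beta,\gamma}=\sum_{i=0}^{l}(-1)^i\binom{l}{i}t^{\alpha-i\gamma}d_\mu\cdot t^{\beta+i\gamma}d_\mu\in\ann(M)$ for all $l\ge l_0-3$ and generic $\mu$. It then computes the commutators $[\Omega^{(l)}_{\ast},t^{k\gamma}\xi_I\partial]$ for the three cases $\partial=\frac{\partial}{\partial\xi_j}$, $\partial=d_\mu$, and $\partial=d_{\mu'}$ with $(\mu',\gamma)=0$, and takes explicit linear combinations of these (raising the order of the difference from $l$ to $l+1$, $l+2$, $l+3$ respectively) to produce the desired identities for the mixed products $t^{\alpha-i\gamma}\xi_I\partial\cdot t^{\beta+i\gamma}d_\mu$; since for $\gamma\neq 0$ every $\partial\in\Delta$ is a linear combination of these three types, the lemma follows. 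If you want to salvage your approach you would essentially have to reprove the Billig--Futorny annihilation theorem from scratch; the efficient path is to cite it and carry out the commutator bookkeeping.
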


 \begin{proof} We only need to prove it for generic $\mu$.  It's clear for $\gamma=0$. We assume that $\gamma\ne 0$ and $\mu$ is generic. Denote  ${\ann(M)}=\{x\in U(W)|x M=0\}$. From \cite{BF1}, there exists some $l_0\ge 3$ such that  $\Omega_{\alpha,\beta,\gamma}^{(l)} \in \ann(M), \forall l\ge l_0-3,\alpha,\beta\in \Z^m$, where $\Omega_{\alpha,\beta,\gamma}^{(l)}=\sum_{i=0}^{l}(-1)^i \binom {l} i t^{\alpha-i\gamma} d_\mu  \cdot t^{\beta+i\gamma} d_\mu$.  Then

  \begin{align*}&[\Omega_{\alpha+s\gamma,\beta+p\gamma,\gamma}^{(l)}, t^{k\gamma}\xi_I \frac{\partial}{\partial \xi_j}]\\ &=\sum_{i=0}^{l}(-1)^i \binom {l} i [t^{\alpha+(s-i)\gamma} d_\mu, t^{k\gamma}\xi_I \frac{\partial}{\partial \xi_j}]  \cdot t^{\beta+(p+i)\gamma} d_\mu \\ &+\sum_{i=0}^{l}(-1)^i \binom {l} i t^{\alpha+(s-i)\gamma} d_\mu \cdot [ t^{\beta+(p+i)\gamma} d_\mu ,t^{k\gamma}\xi_I\frac{\partial}{\partial \xi_j}] \\ &=k(\mu,\gamma)\sum_{i=0}^{l}(-1)^i \binom {l} i t^{\alpha+(s+k-i)\gamma}\xi_I \frac{\partial}{\partial \xi_j} \cdot t^{\beta+(p+i)\gamma} d_\mu \\ &+k(\mu,\gamma)\sum_{i=0}^{l}(-1)^i \binom {l} i   t^{\alpha+(s-i)\gamma} d_\mu \cdot  t^{\beta+(p+k+i)\gamma}\xi_I\frac{\partial}{\partial \xi_j}\in \ann(M).\end{align*}

   Thus \begin{align*}&2[\Omega_{\alpha+\gamma,\beta,\gamma}^{(l)}, t^{-\gamma}\xi_I \frac{\partial}{\partial \xi_j}]-[\Omega_{\alpha+\gamma,\beta+\gamma,\gamma}^{(l)}, t^{-2\gamma}\xi_I \frac{\partial}{\partial \xi_j}]\\ &=-2(\gamma,\mu)\Big( \sum_{i=0}^{l}(-1)^i \binom {l} i  t^{\alpha-i\gamma}\xi_I \frac{\partial}{\partial \xi_j} \cdot t^{\beta+i\gamma} d_\mu - \sum_{i=0}^{l}(-1)^i \binom {l} i  t^{\alpha-(1+i)\gamma}\xi_I \frac{\partial}{\partial \xi_j} \cdot t^{\beta+(i+1)\gamma} d_\mu \Big)\\&=-2(\gamma,\mu)\Big( \sum_{i=0}^{l+1}(-1)^i \binom {l+1} i t^{\alpha-i\gamma} \xi_I \frac{\partial}{\partial \xi_j}\cdot t^{\beta+i\gamma} d_\mu \Big)\in \ann(M).\end{align*} We get  \begin{equation}\label{eq3.11}\sum_{i=0}^{l+1}(-1)^i \binom {l+1} i t^{\alpha-i\gamma} \xi_I \frac{\partial}{\partial \xi_j}\cdot t^{\beta+i\gamma} d_\mu \in \ann(M).\end{equation}

From  \begin{align*}&[\Omega_{\alpha+s\gamma,\beta+p\gamma,\gamma}^{(l)}, t^{k\gamma}\xi_I d_{\mu}]\\ &=\sum_{i=0}^{l}(-1)^i \binom {l} i [t^{\alpha+(s-i)\gamma} d_\mu, t^{k\gamma}\xi_I d_\mu]  \cdot t^{\beta+(p+i)\gamma} d_\mu
\\ &+\sum_{i=0}^{l}(-1)^i \binom {l} i t^{\alpha+(s-i)\gamma} d_\mu \cdot [ t^{\beta+(p+i)\gamma} d_\mu ,t^{k\gamma}\xi_Id_\mu] \\ &=\sum_{i=0}^{l}(-1)^i \binom {l} i (\mu,-\alpha+(k-s+i)\gamma)t^{\alpha+(s+k-i)\gamma}\xi_I d_\mu \cdot t^{\beta+(p+i)\gamma} d_\mu \\ &+\sum_{i=0}^{l}(-1)^i \binom {l} i  (\mu,-\beta+(k-p-i)\gamma) t^{\alpha+(s-i)\gamma} d_\mu \cdot  t^{\beta+(p+k+i)\gamma}\xi_Id_\mu \in \ann(M),\end{align*}

We have

\begin{align*}&f(s,p,k)\\&:=2[\Omega_{\alpha+s\gamma,\beta+p\gamma,\gamma}^{(l)},t^{k\gamma}\xi_I d_{\mu}]-[\Omega_{\alpha+s\gamma,\beta+(p+1)\gamma,\gamma}^{(l)},t^{(k-1)\gamma}\xi_I d_{\mu}]-[\Omega_{\alpha+s\gamma,\beta+(p-1)\gamma,\gamma}^{(l)}, t^{(k+1)\gamma}\xi_I d_{\mu}]\\ &=2\sum_{i=0}^{l}(-1)^i \binom {l} i (\mu,-\alpha+(k-s+i)\gamma) t^{\alpha+(s+k-i)\gamma}\xi_I d_\mu \cdot t^{\beta+(p+i)\gamma} d_\mu\\&-
\sum_{i=0}^{l}(-1)^i \binom {l} i (\mu,-\alpha+(k-1-s+i)\gamma)t^{\alpha+(s+k-1-i)\gamma}\xi_I d_\mu \cdot t^{\beta+(p+1+i)\gamma} d_\mu \\&-
\sum_{i=0}^{l}(-1)^i \binom {l} i (\mu,-\alpha+(k+1-s+i)\gamma)t^{\alpha+(s+k+1-i)\gamma}\xi_I d_\mu \cdot t^{\beta+(p-1+i)\gamma} d_\mu\in \ann(M).
\end{align*}

Then \begin{equation}\label{eq3.12}\begin{split}& \frac 1{-2(\mu,\gamma)}(f(0,1,-1)-f(1,1,-2))\\&= -2\sum_{i=0}^{l}(-1)^i \binom {l} i t^{\alpha-(1+i)\gamma}\xi_I d_\mu \cdot t^{\beta+(1+i)\gamma} d_\mu+\sum_{i=0}^{l}(-1)^i \binom {l} i t^{\alpha-(2+i)\gamma}\xi_I d_\mu \cdot t^{\beta+(2+i)\gamma} d_\mu\\ &+\sum_{i=0}^{l}(-1)^i \binom {l} i t^{\alpha-i\gamma}\xi_I d_\mu \cdot t^{\beta+i\gamma} d_\mu\\&
=\sum_{i=0}^{l+2}(-1)^i \binom {l+2} i t^{\alpha-i\gamma}\xi_I d_\mu \cdot t^{\beta+i\gamma} d_\mu\in \ann(M).\end{split}
\end{equation}
Similarly, we have $\sum_{i=0}^{l+2}(-1)^i(^{l+2}_i)t^{\alpha-i\gamma}d_\mu\cdot t^{\beta+i\gamma}\xi_Id_\mu\in\ann(M)$.

Now for any $\mu'\in \bC^m$ with $(\mu',\gamma)=0$, we have
 \begin{align*}&[\Omega_{\alpha+s\gamma,\beta+p\gamma,\gamma}^{(l)}, t^{k\gamma}\xi_I d_{\mu'}]\\ &=\sum_{i=0}^{l}(-1)^i \binom {l} i [t^{\alpha+(s-i)\gamma} d_\mu, t^{k\gamma}\xi_I d_{\mu'}]  \cdot t^{\beta+(p+i)\gamma} d_\mu \\ &+\sum_{i=0}^{l}(-1)^i \binom {l} i t^{\alpha+(s-i)\gamma} d_\mu \cdot [ t^{\beta+(p+i)\gamma} d_\mu ,t^{k\gamma}\xi_Id_{\mu'}] \\&=k(\mu,\gamma)\sum_{i=0}^{l}(-1)^i \binom {l} i t^{\alpha+(s+k-i)\gamma}\xi_I d_{\mu'} \cdot t^{\beta+(p+i)\gamma} d_\mu\\&-(\alpha,\mu')\sum_{i=0}^{l}(-1)^i \binom {l} i t^{\alpha+(s+k-i)\gamma}\xi_I d_\mu \cdot t^{\beta+(p+i)\gamma} d_\mu \\ &+k(\mu,\gamma)\sum_{i=0}^{l}(-1)^i \binom {l} i t^{\alpha+(s-i)\gamma} d_\mu \cdot t^{\beta+(k+p+i)\gamma} \xi_Id_{\mu'}\\&-(\beta,\mu')\sum_{i=0}^{l}(-1)^i \binom {l} i t^{\alpha+(s-i)\gamma} d_\mu \cdot  t^{\beta+(k+p+i)\gamma}\xi_I d_\mu\in \ann(M).\end{align*} 
 From (\ref{eq3.12}), we have $g(s,p,k):=\sum_{i=0}^{l+2}(-1)^i \binom {l+2} i t^{\alpha+(s+k-i)\gamma}\xi_I d_{\mu'} \cdot t^{\beta+(p+i)\gamma} d_\mu\newline+\sum_{i=0}^{l+2}(-1)^i \binom {l+2} i t^{\alpha+(s-i)\gamma} d_\mu \cdot t^{\beta+(k+p+i)\gamma} \xi_Id_{\mu'}\in \ann(M),\forall k\ne 0$.

Then \begin{equation}\label{eq3.13}\begin{split}&g(1,0,-1)-g(1,1,-2)\\&=\sum_{i=0}^{l+2}(-1)^i \binom {l+2} i t^{\alpha-i\gamma}\xi_I d_{\mu'}\cdot t^{\beta+i\gamma} d_\mu\\& -\sum_{i=0}^{l+2}(-1)^i \binom {l+2} i t^{\alpha-(i+1)\gamma}\xi_I d_{\mu'} \cdot t^{\beta+(1+i)\gamma} d_\mu\\&=\sum_{i=0}^{l+3}(-1)^i \binom {l+3} i t^{\alpha-i\gamma}\xi_I d_{\mu'} \cdot t^{\beta+i\gamma} d_\mu\in \ann(M).\end{split}\end{equation}

The lemma follows from (\ref{eq3.11}),(\ref{eq3.12}) and (\ref{eq3.13}).
 \end{proof}

\begin{lemma}\label{lem3.10} For any  cuspidal $W_{m,n}$ module $M$, $\hat{M}$ is also  cuspidal.\end{lemma}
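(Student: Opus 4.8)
The plan, following \cite{BF1}, is to show that $\hat M$ has uniformly bounded weight spaces. First I would observe that $W\otimes M$ is a $D_m$-weight module — the image of $t^{\alpha}\xi_I\partial\otimes v$ with $\partial\in\Delta$, $I\subseteq\{1,\dots,n\}$ and $v\in M_{\nu}$ is a weight vector of weight $\alpha+\nu$ — and that $K(M)$ is a weight submodule, so $\hat M$ is a weight module whose support lies in the single $\Z^m$-coset containing $\mathrm{Supp}(M)$. Hence it is enough to find a constant $C$, independent of $\lambda$, with $\dim\hat M_{\lambda}\le C$. Fix $N$ with $\dim M_{\nu}<N$ for all $\nu$. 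Then $\hat M_{\lambda}$ is spanned by the images $\overline{t^{\alpha}\xi_I\partial\otimes v}$ (the cosets $t^{\alpha}\xi_I\partial\otimes v+K(M)$) with $\alpha\in\Z^m$ and $v\in M_{\lambda-\alpha}$, and for each fixed $\alpha$ these span at most $2^{n}(m+n)N$ dimensions; so the whole task is to confine the exponent $\alpha$ to a finite set that does not depend on $\lambda$.

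The device for this is to push the relations of Lemma \ref{lem3.9} into $\hat M$. For fixed $\partial$, $I$, $\alpha,\beta,\gamma\in\Z^m$, $\mu\in\bC^m$, $v\in M$, I would set $P=\sum_{i=0}^{l_0}(-1)^i\binom{l_0}{i}\,t^{\alpha+i\gamma}\xi_I\partial\otimes(t^{\beta-i\gamma}d_{\mu}v)\in W\otimes M$ and check $P\in K(M)$: for $t^{\delta}\xi_J\in A$ one has $(t^{\delta}\xi_J)\cdot(t^{\alpha+i\gamma}\xi_I\partial)=\varepsilon\,t^{\delta+\alpha+i\gamma}\xi_{J\cup I}\partial$ with $\varepsilon=(-1)^{\tau(J,I)}$ independent of $i$ (and the product is $0$ if $J\cap I\ne\varnothing$), so Lemma \ref{lem3.9} with $\alpha$ replaced by $\delta+\alpha$ and $I$ by $J\cup I$ gives $\sum_i(-1)^i\binom{l_0}{i}\big((t^{\delta}\xi_J)\cdot t^{\alpha+i\gamma}\xi_I\partial\big)\big(t^{\beta-i\gamma}d_{\mu}v\big)=0$; as $t^{\delta}\xi_J$ runs over a spanning set of $A$ this means $P\in K(M)$. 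Therefore, in $\hat M$,
\[
\sum_{i=0}^{l_0}(-1)^i\binom{l_0}{i}\,\overline{t^{\alpha+i\gamma}\xi_I\partial\otimes t^{\beta-i\gamma}d_{\mu}v}=0 .
\]

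It remains to turn this into a genuine reduction of $\alpha$, which I expect to be the main point. If $v\in M_{\nu}$ with $\nu\ne0$ and $\mu$ is chosen with $(\mu,\nu)=\sum_k\mu_k\nu_k\ne0$, then $d_{\mu}v=(\mu,\nu)v$; substituting $\beta=l_0\gamma$, replacing $\alpha$ by $\alpha-l_0\gamma$ in the identity above and re-indexing the sum, I would get
\[
\overline{t^{\alpha}\xi_I\partial\otimes v}=\frac{1}{(\mu,\nu)}\sum_{j=1}^{l_0}(-1)^{j+1}\binom{l_0}{j}\,\overline{t^{\alpha-j\gamma}\xi_I\partial\otimes t^{j\gamma}d_{\mu}v},
\]
so $\overline{t^{\alpha}\xi_I\partial\otimes v}$ is rewritten through standard generators of $\hat M_{\lambda}$ whose $\Z^m$-exponent is $\alpha-j\gamma$, $1\le j\le l_0$. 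Taking $\gamma=\pm e_k$ changes only the $k$-th coordinate of the exponent, and strictly in one direction, and an elementary induction (reduce coordinate $1$, then coordinate $2$, and so on; one step applied to $\alpha_k>l_0$ produces $k$-th coordinates in $[\alpha_k-l_0,\alpha_k-1]\subseteq[1,\alpha_k)$, and symmetrically when $\alpha_k<-l_0$) then confines every exponent to the box $F=\{\alpha\in\Z^m:|\alpha_k|\le l_0\text{ for all }k\}$; at each step $\mu$ is re-chosen so that the relevant $(\mu,\nu)\ne0$ (only finitely many weights occur, and the single exceptional case $\nu=0$, which forces $\alpha=\lambda$, contributes only the fewer than $2^{n}(m+n)N$ generators with $M$-part in $M_0$, to be kept aside). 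This gives $\dim\hat M_{\lambda}\le(|F|+1)\,2^{n}(m+n)N$ for all $\lambda$, so $\hat M$ is cuspidal. The delicate part will be exactly this finite-difference bookkeeping — ensuring that $F$ is truly independent of $\lambda$ and that the denominators $(\mu,\nu)$ can always be taken nonzero.
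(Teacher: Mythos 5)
Your proposal is correct and follows essentially the same route as the paper: push the identities of Lemma \ref{lem3.9} into $K(M)$ and use them to confine the $\Z^m$-exponent of the $W$-factor of a spanning set of $\hat M_{\lambda}$ to a finite box independent of $\lambda$. The only (harmless) difference is bookkeeping: you specialize $\beta=0$ so that $d_{\mu}v$ is a scalar multiple of $v$ and set aside the $M_0$-part, whereas the paper carries the general second factor $t^{\beta}d_{\mu}v$ through an induction on $\|\alpha\|$ and invokes $M=M_0+\sum_j d_jM$ at the end.
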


\begin{proof} It is obvious for $m=0$. Suppose that $m\in \N$. Let $\|\alpha||= \sum_{i=1}^m|\alpha_i|$ for all $\alpha\in \Z^m$.  From Lemma \ref{lem3.9}, there exists some $l_0\in \N$ such that
$\sum_{i=0}^{l_0}(-1)^i \binom {l_0} i t^{\alpha-i\gamma} \xi_I \partial \cdot t^{\beta+i\gamma} d_\mu v=0,\,\forall\,\partial\in \Delta, I\subseteq \{1,\ldots,n\}, v\in M,\alpha,\beta,\gamma\in \Z^m,\mu\in\bC^m$.  Then

\begin{equation}\label{eq3.14}\sum_{i=0}^{l_0}(-1)^i \binom {l_0} i t^{\alpha-i\gamma} \xi_I \partial \otimes t^{\beta+i\gamma} d_\mu v\in K(M) \end{equation}
for all $\partial\in \Delta, I\subseteq \{1,\ldots,n\}, v\in M,\alpha,\beta,\gamma\in \Z^m,\mu\in\bC^m$.

We are going to prove by induction on $\|\alpha\|$ that
\begin{equation}\label{eq3.15}t^{\alpha}\xi_I \partial \otimes t^{\beta}d_{\mu} v\in \sum_{\begin{matrix}\|\alpha' \|\le ml_0,\\ I'\subseteq \{1,\ldots,n\}\end{matrix}}t^{\alpha'}\xi_{I'}\Delta\otimes M +K(M),\end{equation} for all $\alpha,\beta\in \Z^m,\partial\in \Delta, I\subseteq  \{1,\ldots,n\},\mu\in\bC^m,v\in M_{\lambda}$.
This is obvious for $\alpha\in\Z^m$ with $\|\alpha\|\le ml_0$. Now we assume that $\|\alpha\|>ml_0$. Without lose of generality, we may assume that $\alpha_1>l_0$. Then by  (\ref{eq3.14}) and  the induction hypothesis, for any $j\in\{1,\dots,m\}$, we have

$t^{\alpha}\xi_I \partial \otimes t^{\beta}d_j v=(\sum_{i=0}^{l_0}(-1)^i \binom {l_0} i t^{\alpha-ie_1} \xi_I \partial \otimes t^{\beta+ie_1} d_j v)-(\sum_{i=1}^{l_0}(-1)^i \binom {l_0} i t^{\alpha-ie_1} \xi_I \partial \otimes t^{\beta+ie_1} d_j v)\in K(M)$ as desired.

Since $M=M_0+(\sum_{j=1}^md_jM)$, we deduce that
 \begin{equation}W\otimes M=
W\otimes M_0+\sum_{\begin{matrix}\|\alpha \|\le ml_0,\\ I\subseteq \{1,\ldots,n\}\end{matrix}}t^{\alpha}\xi_I\Delta\otimes M +K(M).\end{equation} Now it's clear that $\hat{M}=(W\otimes M)/K(M)$ is  cuspidal.
\end{proof}

\begin{theorem}\label{the3.11}Let $(m,n)\in \Z_+^2\backslash\{(0,0)\}$. Any nontrivial  simple cuspidal $W_{m,n}$ module is isomorphic to a simple quotient of a tensor module $\Gamma(\alpha, V)$ for some finite-dimensional simple $\gl(m,n)$ module $V$ and some $\alpha\in \bC^m$.  \end{theorem}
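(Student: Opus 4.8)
The plan is to reduce the classification of simple cuspidal $W_{m,n}$ modules to the classification of simple cuspidal $AW_{m,n}$ modules already obtained in Lemma \ref{lem3.8}, via the cover construction. First I would dispose of the trivial cases: if $m=0$ then $W_{0,n}$ is finite-dimensional and the statement is vacuous or handled by \cite{BL}, and a trivial module is excluded by hypothesis; so assume $m\in\N$ and let $M$ be a nontrivial simple cuspidal $W_{m,n}$ module. Since $M$ is simple and nontrivial, $WM=M$, so the cover $\hat M=(W\otimes M)/K(M)$ is defined and, by Lemma \ref{lem3.10}, it is a cuspidal $AW_{m,n}$ module, with the canonical epimorphism $\pi\colon\hat M\to M$ of $W$-modules from \eqref{cover}.

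Next I would pass to a simple subquotient of $\hat M$ that still surjects onto $M$. The module $\hat M$ is cuspidal, hence has a finite composition series as a $W_{m,n}$-module (indeed as an $AW_{m,n}$-module). Since $\pi$ is a nonzero $W$-module map onto the simple module $M$, some composition factor $N$ of $\hat M$ — which is a simple cuspidal $AW_{m,n}$-module — must have $M$ as a $W$-module subquotient; more carefully, choosing a submodule $\hat M'\subseteq\hat M$ minimal among those with $\pi(\hat M')=M$, the module $\hat M'$ has a unique maximal $AW$-submodule $\hat M''$ (namely the sum of all proper submodules, which by minimality are killed by $\pi$, so lie in $\ker\pi$) and $\hat M'/\hat M''$ is a simple cuspidal $AW_{m,n}$-module mapping onto $M$ as $W$-modules. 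By Lemma \ref{lem3.8}, this simple cuspidal $AW_{m,n}$-module is isomorphic to a tensor module $\Gamma(\alpha,V)$ for some finite-dimensional simple $\gl(m,n)$-module $V$ and some $\alpha\in\bC^m$. Therefore $M$ is a quotient (as a $W_{m,n}$-module) of $\Gamma(\alpha,V)$, which is the assertion.

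The main obstacle I anticipate is the step producing a \emph{simple} cuspidal $AW$-module that surjects onto $M$ as a $W$-module — i.e. verifying that $\pi$ factors appropriately. The subtle point is that $K(M)$ as defined is an $AW$-submodule, but $\pi$ is only a $W$-module map, so one must argue at the level of $W$-module structure: $\hat M$ has finite length as a $W$-module (cuspidality), pick a $W$-submodule $P$ minimal with $\pi(P)=M$; then $P\cap\ker\pi$ is the unique maximal $W$-submodule of $P$ and $P/(P\cap\ker\pi)\cong M$. To upgrade $P/(P\cap\ker\pi)$ to a tensor module one needs it to be an $AW$-module, which is not automatic for an arbitrary $W$-submodule $P$. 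The clean fix is to instead work inside the $AW$-module category: the kernel $\ker\pi$ need not be an $AW$-submodule, so one takes a composition series $0=\hat M_0\subset\hat M_1\subset\cdots\subset\hat M_r=\hat M$ of $\hat M$ as an $AW$-module, each factor $\hat M_i/\hat M_{i-1}$ simple cuspidal $AW$; since $\pi$ is onto, there is a smallest $i$ with $\pi(\hat M_i)=M$, and then $\pi$ induces a nonzero $W$-module map $\hat M_i/\hat M_{i-1}\to M/\pi(\hat M_{i-1})$. Because $M$ is simple, either $\pi(\hat M_{i-1})=0$, in which case $\hat M_i/\hat M_{i-1}\twoheadrightarrow M$ as $W$-modules, or $\pi(\hat M_{i-1})=M$, contradicting minimality; so the first alternative holds. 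Combining with Lemma \ref{lem3.8} gives $M$ as a $W_{m,n}$-quotient of some $\Gamma(\alpha,V)$, finishing the proof.

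I would also record, for completeness, that one may assume $M$ is generated over $W_{m,n}$ by a single weight vector (automatic by simplicity), so the composition series exists and is finite; and that the resulting $V$ can be taken to be one of the finite-dimensional simple $\gl(m,n)$-modules classified in \cite{K}, which is exactly the form claimed. No further computation is needed beyond invoking Lemma \ref{lem3.8}, Lemma \ref{lem3.10}, and finite length of cuspidal modules.
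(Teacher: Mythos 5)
Your proposal is correct and follows essentially the same route as the paper: form the $A$-cover $\hat M$, use Lemma \ref{lem3.10} to get cuspidality, take a composition series of $\hat M$ in the category of $AW_{m,n}$-modules, pick the first step $l$ where $\pi$ becomes nonzero (so $\pi(\hat M^{(l-1)})=0$ and $\pi(\hat M^{(l)})=M$ by simplicity of $M$), and apply Lemma \ref{lem3.8} to the simple cuspidal $AW$-factor $\hat M^{(l)}/\hat M^{(l-1)}$. Your discussion of why one must filter by $AW$-submodules rather than arbitrary $W$-submodules correctly identifies and resolves the only subtle point, and matches the paper's argument.
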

\begin{proof} Let $M$ be any nontrivial  simple cuspidal $W$ module. Then $WM=M$, and there is an epimorphism $\pi:\hat{M}\rightarrow M$. From Lemma \ref{lem3.10}, $\hat{M}$ is  cuspidal. Hence $\hat{M}$ has a composition series of $AW$ submodules:
$$0=\hat{M}^{(1)}\subset \hat{M}^{(2)}\subset\cdots \subset \hat{M}^{(s)}=\hat{M}$$ with $\hat{M}^{(i)}/\hat{M}^{(i-1)}$ being simple $AW$ modules. Let $l$ be the minimal integer such that $\pi(\hat{M}^{(l)})\ne 0$.  Since $M$ is simple $W$ module, we have $\pi(\hat{M}^{(l)})=M$ and $\pi(\hat{M}^{(l-1)})=0$. This gives us an epimorphism of $W$ modules from the simple cuspidal $AW$ module $\hat{M}^{(l)}/\hat{M}^{(l-1)}$ to $M$. From
 Lemma \ref{lem3.8}, we have $\hat{M}^{(l)}/\hat{M}^{(l-1)}$  is isomorphic to a tensor module  $\Gamma(\alpha,V)$  for  a finite-dimensional simple $\gl(m,n)$ module $V$ and an $\alpha\in \bC^m$. This completes the proof.
\end{proof}

\section{ Simple cuspidal modules over the extended Witt superalgebras}

For any $m\in \N$, let $G=\Z e_1+\cdots \Z e_{m-1}$, $W_{m,n}':=(W_{m,n})_{G}=A_{m-1,n}\Delta=W_{m-1,n}+A_{m-1,n}d_m, \tilde{W}_{m,n}':=(\tilde{W}_{m,n})_G=W_{m,n}'+A_{m-1,n}$ be Lie supersubalgebras of $\tilde{W}_{m,n}$. Clearly we have   $W_{m,n}'\cong \tilde{W}_{m-1,n}$. In this section, we determine the  simple cuspidal module over $W_{m,n}'$.  Similarly, a $\tilde{W}_{m,n}'$ module is called $AW_{m,n}'$ module if $A_{m-1,n}$ acts associatively.

 Let  $\mathcal{J}'$ be the left ideal of $U(\tilde{W}_{m,n}')$ generated by $\{t^0-1,t^{\alpha}\xi_I\cdot t^{\beta}\xi_J-t^{\alpha+\beta}\xi_I\xi_J|\alpha,\beta\in \Z^{m-1}, I,J\subseteq \{1,2,\ldots,n\} \}$. Then $\mathcal{J}'$ is  an ideal of $U(\tilde{W}_{m,n}')$ and we have the quotient algebra $\bar{U}'=(U(A_{m-1,n}) U(W_{m,n}'))/\mathcal{J}'$. From PBW Theorem,  we may identify $A_{m-1,n}$, $W_{m,n}'$ with their images in $\bar{U}'$. Thus $\bar{U}'=A_{m-1,n}\cdot U(W_{m,n}')$ and we may regard $\bar{U}'$ as a supersubalgebra of $\bar{U}_{m,n}$. Let $\mathcal{T}'=\mathcal{T}_{m,n}\cap \bar{U}'$, $K'=K_{m,n}\cap  \bar{U}'$. Let ${\mathfrak{a}}_{m,n}={\rm Span}\{E_{s,t}|s,t\in \{1,2,\ldots,m-1, m+1,\ldots, m+n\}\}\subseteq \gl(m,n)$, ${\mathfrak{b}}_{m,n}={\rm Span}\{E_{s,m}|s\in \{1,2,\ldots,m-1, m+1,\ldots, m+n\}\}$ be Lie supersubalgebras of $\gl(m,n)$. Then

\begin{lemma}\label{lem4.1}  {\rm (1).} We have the  associative superalgebra isomorphism
\begin{equation}\label{iota1}
\iota':K' \otimes U(\mathcal{T}')\rightarrow \bar{U}', \  \
\iota'(x \otimes y)=x\cdot y
\end{equation}
where $x\in K', y\in U(\mathcal{T}')$.

{\rm (2).} $\mathcal{T}'\cong \mathfrak{m}_{m-1,n}\Delta_{m,n}$.

{\rm (3).} $ \mathfrak{m}_{m-1,n}\Delta_{m,n}/ \mathfrak{m}_{m-1,n}^2\Delta_{m,n}\cong {\mathfrak{a}}_{m,n}\ltimes {\mathfrak{b}}_{m,n}\cong \gl(m-1,n)\ltimes \bC^{m-1,n}$.

{\rm (4).} For any finite-dimensional simple $\mathfrak{m}_{m-1,n}\Delta_{m,n}$ module $V$, we have $(\mathfrak{m}_{m-1,n}^2 \Delta_{m,n}+\mathfrak{m}_{m-1,n}d_m) V=0$. Hence $V$ can be regarded as a simple $ \gl(m-1,n)$ module via $$\mathfrak{m}_{m-1,n}\Delta_{m,n}/(\mathfrak{m}_{m-1,n}^2 \Delta_{m,n}+\mathfrak{m}_{m-1,n}d_m)\cong \gl(m-1,n).$$
\end{lemma}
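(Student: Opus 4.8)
The plan is to mirror, in the present ``rank $m-1$ plus a commuting copy'' setting, the chain of isomorphisms already established for $W_{m,n}$ in Lemmas \ref{lem3.2}, \ref{lem3.3}, \ref{lem3.7} and Theorem \ref{the3.4}. For part (1), I would first observe that $\bar U'=A_{m-1,n}\cdot U(W_{m,n}')$ sits inside $\bar U_{m,n}$ as a subalgebra, and that the generators $X_{\alpha,i},Y_{\alpha,j},X'_{\beta,I,i},Y'_{\beta,I,j}$ with $\alpha,\beta\in\Z^{m-1}$ (and now $i$ ranging over $1,\dots,m$, including $i=m$) all lie in $\bar U'$; these span $\mathcal T'$. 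The key point is the analogue of Lemma \ref{lem3.2}: $\mathcal T'=\{x\in A_{m-1,n}\cdot W_{m,n}'\mid [x,\Delta_{m,n}]=[x,A_{m-1,n}]=0\}$, which follows by restricting the already-proven Lemma \ref{lem3.2}(1) and noting that $\mathcal T'=\mathcal T_{m,n}\cap\bar U'$ by definition; likewise $\tilde{\mathcal B}\cap\bar U'$ is a basis of the free left $A_{m-1,n}$-module $A_{m-1,n}\cdot W_{m,n}'$. Then the proof of Lemma \ref{lem3.3} goes through verbatim with $K_{m,n},\mathcal T,\bar U$ replaced by $K',\mathcal T',\bar U'$: one checks $\iota'$ is a well-defined superalgebra homomorphism because $\iota'(K')$ and $\iota'(U(\mathcal T'))$ supercommute, builds the inverse $\eta'$ from the rank-$(m-1)$ versions of Lemma \ref{lem3.1}(1)(2), and concludes $\iota'$ is an isomorphism.

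For part (2), I would apply Theorem \ref{the3.4} in the guise $\psi':\mathcal T'\to\mathfrak m_{m-1,n}\Delta_{m,n}$, sending $X_{\alpha,i}\mapsto(t^\alpha-1)d_i$, $X'_{\alpha,I,i}\mapsto t^\alpha\xi_I d_i$ (and similarly for $Y$), $\alpha\in\Z^{m-1}$, $i\in\{1,\dots,m\}$; this is a bijection of superspaces, and the Lie bracket computation is exactly the one in the proof of Theorem \ref{the3.4} restricted to the indices appearing. For part (3), I would exhibit the explicit linear map $\mathfrak m_{m-1,n}\Delta_{m,n}/\mathfrak m_{m-1,n}^2\Delta_{m,n}\to\gl(m,n)$ as in Lemma \ref{lem3.7}(1) and identify its image: the generators $(t_i-1)\partial$ and $\xi_s\partial$ with $i\le m-1$ give $E_{s,t}$ with $s,t\in\{1,\dots,m-1,m+1,\dots,m+n\}$, i.e. $\mathfrak a_{m,n}\cong\gl(m-1,n)$, while the generators of the form $(t_i-1)d_m$ and $\xi_s d_m$ give the $E_{s,m}$ spanning $\mathfrak b_{m,n}$, on which $\mathfrak a_{m,n}$ acts as the natural $\gl(m-1,n)$-module $\bC^{m-1,n}$ (and $[\mathfrak b,\mathfrak b]=0$ since $d_m$ never reappears); injectivity and surjectivity onto $\mathfrak a_{m,n}\ltimes\mathfrak b_{m,n}$ are checked on basis elements, exactly as in Lemma \ref{lem3.7}(1).

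For part (4), the strategy is to adapt the grading argument of Lemma \ref{lem3.7}(2)(3). Using the operator $d=\sum_{i=1}^{m-1}(t_i-1)\frac{\partial}{\partial t_i}+\sum_{j=1}^n\xi_j\frac{\partial}{\partial\xi_j}$, a finite-dimensional $\mathfrak m_{m-1,n}\Delta_{m,n}$-module $V$ is killed by some power $\mathfrak m_{m-1,n}^k\Delta_{m,n}$ by the same characteristic-polynomial argument (here $\Delta_{m,n}$ includes $d_m=t_m\frac{\partial}{\partial t_m}$, but since $t_m$ is invertible this causes no trouble — one works with $\Delta'$ and the auxiliary polynomial algebra as before). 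Then $d$ acts diagonalizably on the simple module $V$, and eigenvalues of $d$ on $\mathfrak m_{m-1,n}^2\Delta_{m,n}V$ are shifted up by positive integers, forcing $\mathfrak m_{m-1,n}^2\Delta_{m,n}V=0$ by simplicity. The genuinely new point is that $\mathfrak m_{m-1,n}d_m$ must also annihilate $V$: since $[\mathfrak b_{m,n},\mathfrak b_{m,n}]=0$ and $\mathfrak b_{m,n}=\mathfrak m_{m-1,n}d_m\bmod\mathfrak m_{m-1,n}^2\Delta_{m,n}$ is an abelian ideal of $\mathfrak a_{m,n}\ltimes\mathfrak b_{m,n}$, its image in $\mathfrak{gl}(V)$ is a commuting family of operators normalized by the action of $\gl(m-1,n)$; one argues (e.g. by weight considerations for the $d$-grading, or by noting that $\mathfrak b_{m,n}V$ is a proper submodule of the simple module $V$, since $d_m$-type elements strictly raise or preserve the relevant degree but cannot generate everything) that it acts by $0$. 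I expect this last point — showing $\mathfrak m_{m-1,n}d_m$ acts trivially on a finite-dimensional simple module, i.e. that the abelian part $\bC^{m-1,n}$ of $\gl(m-1,n)\ltimes\bC^{m-1,n}$ acts by zero — to be the main obstacle, but it follows from the standard fact that an abelian ideal of a Lie superalgebra acts nilpotently on any finite-dimensional module (Lie's theorem / Engel-type argument) hence by zero on a simple one, after which $V$ descends to a simple $\gl(m-1,n)$-module via the quotient in (3).
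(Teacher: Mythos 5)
Your treatment of (1)--(3) and of the first half of (4) (killing $\mathfrak{m}_{m-1,n}^2\Delta_{m,n}$ by the characteristic-polynomial/eigenvalue-shift argument of Lemma \ref{lem3.7}) is exactly the paper's route: everything is obtained by restricting the isomorphisms of Lemma \ref{lem3.3}, Theorem \ref{the3.4} and Lemma \ref{lem3.7}(1), and that part of your proposal is fine.

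The problem is the final justification you settle on for $\mathfrak{m}_{m-1,n}d_m\,V=0$, i.e.\ $\bC^{m-1,n}V=0$. The ``standard fact that an abelian ideal of a Lie superalgebra acts nilpotently on any finite-dimensional module, hence by zero on a simple one'' is false: the center of $\gl_k$ is an abelian ideal acting by an arbitrary scalar on a simple finite-dimensional module, and in the super setting even the Lie-theorem-type statement that $[\mathfrak g,\operatorname{rad}\mathfrak g]$ acts nilpotently fails (e.g.\ $E_{11}+E_{22}\in[\gl(1|1),\gl(1|1)]$ acts by a nonzero scalar on a typical simple $\gl(1|1)$-module). So abelianness of $\mathfrak b_{m,n}$ alone proves nothing; what makes the argument work is the specific grading. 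The paper's proof takes the identity matrix $I\in\gl(m-1,n)$ (the image of your grading operator $d$), uses $[I,x]=x$ for all $x\in\bC^{m-1,n}$ to conclude that the eigenvalues of $I$ on $\bC^{m-1,n}V$ lie in $\{\lambda_1,\dots,\lambda_s\}+\N$, hence $\bC^{m-1,n}V\neq V$; since $\bC^{m-1,n}$ is an ideal, $\bC^{m-1,n}V$ is a submodule and simplicity forces $\bC^{m-1,n}V=0$. You do gesture at exactly this in your parenthetical (``weight considerations for the $d$-grading''), and that is the argument you should promote to the main line — note also that the elements of $\mathfrak m_{m-1,n}d_m$ strictly raise the $d$-eigenvalue (by $+1$ modulo $\mathfrak m^2$), not ``raise or preserve''; if they could preserve it the properness of $\mathfrak b_{m,n}V$ would not follow.
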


\begin{proof} The isomorphisms in (1)-(3) are restrictions of isomorphisms in Lemma \ref{lem3.3}, Theorem \ref{the3.4} and Lemma \ref{lem3.7} (1) respectively.  For any finite-dimensional simple $\mathcal{T}'$ module $V$, by a same argument as in  the proof of Lemma \ref{lem3.7} (2) and (3), we have $\mathfrak{m}_{m-1,n}^2 \Delta_{m,n} V=0$. Hence $V$ can be regarded as a finite-dimensional simple $\gl(m-1,n)\ltimes \bC^{m-1,n}$ module via the isomorphism in (3). Now we only need to show that $\bC^{m-1,n} V=0$.  Let $I$ be the identity matrix in $\gl(m-1,n)$. Let $\{\lambda_1,\ldots,\lambda_s\}$ be the set of all eigenvalues of $I$ on $V$. Then  from $[I,x]=x, \forall x\in  \bC^{m-1,n}$, we have the eigenvalues of $I$ on $\bC^{m-1,n} V$ are contained in  $\{\lambda_1,\ldots,\lambda_s\}+\N$. Therefore, $\bC^{m-1,n} V\ne V$. And from the simplicity of $V$ we deduce that $\bC^{m-1,n} V=0$ as required.\end{proof}

Now we have the homomorphism $\phi': \bar{U}'\rightarrow K'\otimes U({\mathfrak{m}}_{m-1,n}\Delta_{m,n})\rightarrow K'\otimes U(\gl(m-1,n))$.
Let $A_{m,n}(\lambda)'=A_{m-1,n}\subset A_{m,n}(\lambda)$ be the $K'$ submodule of $A_{m,n}(\lambda)$. Then for any $\lambda\in \bC^m$ and any simple $\gl(m-1,n)$ module $V$ we have the $AW_{m,n}'$ module $\Gamma(\lambda,V)=(A_{m-1,n}(\lambda)' \otimes V)^{\phi'}$.
More precisely, $\Gamma(\lambda,V)=A_{m-1,n}\otimes V$ with the actions
 \begin{equation}\label{eq4.2}\begin{split}&xd_i \cdot(y\otimes v)=\sum_{s=1}^n (-1)^{|x|-1+|y|}\frac{\partial x}{\partial \xi_s} y\otimes E_{m-1+s,i}v+ \sum_{s=1}^{m-1} d_s(x)y\otimes E_{s,i}v+x(d_i+\lambda_i)(y)\otimes v;\\
&x\frac{\partial}{\partial \xi_j}\cdot  (y\otimes v)=\sum_{s=1}^n (-1)^{|x|-1}\frac{\partial x}{\partial {\xi_s}}y\otimes E_{m-1+s,m-1+j}v+\sum_{s=1}^{m-1} (-1)^{|y|}d_s(x)y\otimes E_{s,m-1+j}v\\& +x\frac{\partial y}{\partial \xi_j}\otimes v;\\
&xd_m\cdot (y\otimes v)=\lambda_mx y\otimes v;\\
&x\cdot (y\otimes v)=xy\otimes v,\forall x,y\in A_{m-1,n}, v\in V,i=1,2,\ldots,m-1;j=1,2,\ldots,n.
\end{split}
\end{equation}

\begin{lemma}\label{lem4.2} For any  simple cuspidal $AW_{m,n}'$ module $M$, there exists some finite-dimensional simple $\gl(m-1,n)$ module $V$ and $\lambda\in \bC^m$ such that $M\cong \Gamma(\lambda, V)$.\end{lemma}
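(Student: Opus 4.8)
The plan is to mimic the proof of Lemma~\ref{lem3.8}, but with the extended structure theory collected in Lemma~\ref{lem4.1} replacing that of Lemma~\ref{lem3.7}. Let $M$ be a simple cuspidal $AW_{m,n}'$ module. First I would reproduce the analogue of Lemma~\ref{lem3.6}: using the isomorphism $\iota'$ of Lemma~\ref{lem4.1}(1) and the identification $\mathcal{T}'\cong \mathfrak{m}_{m-1,n}\Delta_{m,n}$ of Lemma~\ref{lem4.1}(2), one transports $M$ through $\phi'$ (really through the intermediate map $\bar U'\to K'\otimes U(\mathfrak{m}_{m-1,n}\Delta_{m,n})$) to a simple $K'\otimes U(\mathfrak{m}_{m-1,n}\Delta_{m,n})$ module. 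The key point is that $K'=K_{m,n}\cap\bar U'$ is (up to notational shift) the Weyl superalgebra $K_{m-1,n}$ tensored with the polynomial algebra $\bC[d_m]$ in the single even element $d_m$, so $A_{m-1,n}(\lambda)'$ is a strictly simple $K'$ module on which $d_m$ acts by the scalar $\lambda_m$. By the density argument of Lemma~\ref{lem3.5}(2) (find a homogeneous $v'$ in the finite-dimensional space $\bC[\partial/\partial\xi_1,\dots,\partial/\partial\xi_n]v$ killed by the appropriate left ideal, and note $d_m$ acts as a scalar on the whole weight space), every simple weight $K'$ module is, up to parity, some $A_{m-1,n}(\lambda)'$. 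Then Lemma~\ref{lem2.2}(4) together with Lemma~\ref{lem2.1} gives $M\cong \Gamma(\lambda,V):=(A_{m-1,n}(\lambda)'\otimes V)^{\phi'}$ for some simple $\mathfrak{m}_{m-1,n}\Delta_{m,n}$ module $V$ and some $\lambda\in\bC^m$.

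Next, since $M$ is cuspidal and $\Gamma(\lambda,V)=A_{m-1,n}\otimes V$ has weight space at $\lambda$ equal to (the $\mathfrak{m}$-fixed part of) $V$, the module $V$ is finite-dimensional. Now invoke Lemma~\ref{lem4.1}(4): a finite-dimensional simple $\mathfrak{m}_{m-1,n}\Delta_{m,n}$ module is annihilated by $\mathfrak{m}_{m-1,n}^2\Delta_{m,n}+\mathfrak{m}_{m-1,n}d_m$, so it descends to a simple $\gl(m-1,n)$ module. Feeding this back into the formulas, the action of $W_{m,n}'$ on $A_{m-1,n}\otimes V$ becomes exactly~(\ref{eq4.2}): the $d_m$-terms collapse to the scalar $\lambda_m$ because $\mathfrak{m}_{m-1,n}d_m$ kills $V$, and the remaining terms reduce to the $\gl(m-1,n)$-action via $\bar\psi$ restricted to $\mathfrak{a}_{m,n}$. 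Hence $M\cong\Gamma(\lambda,V)$ with $V$ a finite-dimensional simple $\gl(m-1,n)$ module, as claimed.

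The step I expect to require the most care is the identification of $K'$ and the verification that $A_{m-1,n}(\lambda)'$ is genuinely \emph{strictly} simple over $K'$ with $d_m$ acting centrally by $\lambda_m$ — this is what makes Lemma~\ref{lem2.2}(4) applicable, and it is the place where the presence of the extra even variable $t_m$ (whose derivation $d_m$ is \emph{not} inverted, since $t_m$ is not adjoined) differs structurally from the $W_{m,n}$ case. One must check that $K'$ has countable basis (clear) and that $d_m-\lambda_m$ together with $d_i-\lambda_i$, $\partial/\partial\xi_j$ generate a left ideal with simple quotient $A_{m-1,n}$; once $d_m$ is seen to be central in $K'$ this is routine. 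Everything else is a direct transcription of the arguments already carried out for Lemmas~\ref{lem3.5}, \ref{lem3.6}, \ref{lem3.7}, and~\ref{lem3.8}.
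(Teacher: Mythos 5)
Your proposal is correct and follows essentially the same route as the paper: the paper likewise establishes that $A_{m,n}(\lambda)'$ is a strictly simple weight $K'$ module (by the argument of Lemma \ref{lem3.5}), applies the argument of Lemma \ref{lem3.6}(2) via $\iota'$ to get $M\cong (A_{m,n}(\lambda)'\otimes V)^{\iota'^{-1}}$ for a simple $\mathcal{T}'$ module $V$, deduces finite-dimensionality of $V$ from cuspidality, and then invokes Lemma \ref{lem4.1}(4) to regard $V$ as a simple $\gl(m-1,n)$ module. Your extra attention to the structure of $K'$ (with $d_m$ central, not inverted) is exactly the point the paper leaves implicit in its "similar argument" citations.
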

\begin{proof} By a similar argument as in Lemma \ref{lem3.5}, we have $A_{m,n}(\lambda)'$ is a strictly simple $K'$ module and any simple weight $K'$ module is isomorphic to $A_{m,n}(\lambda)'$ up to a parity-change. Then by a similar argument  as in Lemma \ref{lem3.6}(2), we have any simple cuspidal $AW_{m,n}'$ module $M$ is isomorphic to some $(A_{m,n}(\lambda)'\otimes V)^{\iota'^{-1}}$ for some simple $\mathcal{T}'$ module $V$. Since $M$ is cuspidal, we have $V$ is finite-dimensional. Thus from Lemma \ref{lem4.1} (4) $V$ can be regarded as a simple $\gl(m-1,n)$ module.
The lemma follows from the definition of $\Gamma(\lambda, V)$.\end{proof}

Now we are going to define the $A_{m-1,n}$-cover $\hat{M}$ of a simple nontrivial  cuspida $W_{m,n}'$ module $M$.

Consider $W_{m,n}'$ as the adjoint $W_{m,n}'$ module. We can make the tensor product $W_{m,n}'$ module
 $W_{m,n}'\otimes M$ into an $AW_{m,n}'$ module by defining
$$ x \cdot (y \otimes v)=(xy)\otimes v, \forall x\in A_{m-1,n}, y\in W_{m,n}',v\in M. $$

Denote $K(M)=\{ \sum_{i=1}^k x_i\otimes v_i\in W_{m,n}'\otimes M|  \sum_{i=1}^k(ax_i) v_i=0,\forall a\in A_{m-1,n}\}$. Then it is easy to see
$K(M)$ is an $AW_{m,n}'$ submodule of $W_{m,n}'\otimes M$. Then we have  the  $AW_{m,n}'$ module
 $\hat{M}=(W_{m,n}'\otimes M)/K(M)$, which is called as {\it the cover of} $M$ if $W_{m,n}'M=M$.

 \begin{lemma}\label{lem4.3}If $M$ is a nontrivial simple cuspidal $W_{m,n}'$ module, then $\hat{M}$ is also cuspidal .\end{lemma}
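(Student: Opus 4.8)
The plan is to mimic the proof of Lemma \ref{lem3.10} as closely as possible, transferring the whole argument from $W_{m,n}$ to $W_{m,n}'$. The key point is that $W_{m,n}' = A_{m-1,n}\Delta_{m,n}$ still has the Laurent-polynomial part in the first $m-1$ variables, plus the extra abelian-type generator $d_m$, so the ``differencing'' trick used to push down the $\|\alpha\|$-degree of monomials still works along the directions $e_1,\dots,e_{m-1}$, and the $d_m$-direction causes no growth at all since $A_{m-1,n}$ is already supported on $\Z^{m-1}$.

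First I would establish the analogue of Lemma \ref{lem3.9} for $W_{m,n}'$: there exists $l_0\in\N$ with
\[\sum_{i=0}^{l_0}(-1)^i\binom{l_0}{i} t^{\alpha-i\gamma}\xi_I\partial\cdot t^{\beta+i\gamma}d_\mu\, v = 0\]
for all $\partial\in\Delta_{m,n}$, $I\subseteq\{1,\dots,n\}$, $v\in M$, $\alpha,\beta,\gamma\in\Z^{m-1}$, $\mu\in\bC^m$ (with $d_\mu$ now meaning $\mu_1 d_1+\cdots+\mu_m d_m$, its first $m-1$-variable version living in $W_{m,n}'$). This follows verbatim from the commutator computations in the proof of Lemma \ref{lem3.9}, using the fact (from \cite{BF1}, applied to the solenoidal subalgebra $\Vir[\mu']$ of $W_{m-1,0}\subseteq W_{m,n}'$ for generic $\mu'$ in the first $m-1$ variables, together with a generic choice in the remaining variable) that suitable $\Omega^{(l)}_{\alpha,\beta,\gamma}$ lie in $\ann(M)$; one then runs the same brackets with $t^{k\gamma}\xi_I\partial$ where now $\gamma$ ranges over $\Z^{m-1}$. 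I would remark that the case $\partial=d_m$ needs no separate treatment since $d_m$ commutes with $A_{m-1,n}$ in the relevant way and the identity is trivial for it, or is absorbed into the general $d_\mu$ case.

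Next, exactly as in Lemma \ref{lem3.10}, I would set $\|\alpha\| = \sum_{i=1}^{m-1}|\alpha_i|$ for $\alpha\in\Z^{m-1}$ and prove by induction on $\|\alpha\|$ that
\[t^{\alpha}\xi_I\partial\otimes t^\beta d_\mu v \in \sum_{\|\alpha'\|\le (m-1)l_0,\ I'\subseteq\{1,\dots,n\}} t^{\alpha'}\xi_{I'}\Delta_{m,n}\otimes M + K(M),\]
the base case being clear and the inductive step using the degree-lowering relation obtained by rewriting $t^\alpha\xi_I\partial\otimes t^\beta d_j v$ (for $\alpha_1>l_0$, say) as a combination of terms of strictly smaller $\|\cdot\|$ modulo $K(M)$, via the image of the Lemma \ref{lem3.9}-analogue in $W_{m,n}'\otimes M$. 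Using $M = M_{0} + \sum_{j} d_j M$ (here $d_j M$ over a spanning set of $D_m$, in particular including $d_m M$), one concludes $W_{m,n}'\otimes M = W_{m,n}'\otimes M_0 + \sum_{\|\alpha\|\le (m-1)l_0} t^\alpha\xi_I\Delta_{m,n}\otimes M + K(M)$, so $\hat M = (W_{m,n}'\otimes M)/K(M)$ is spanned, in each weight space, by boundedly many images of $t^\alpha\xi_I\partial\otimes M_{0}$, hence is cuspidal since $M$ is.

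The main obstacle I anticipate is bookkeeping rather than conceptual: one must be careful that $d_m$ genuinely belongs to $W_{m,n}'$ (it does, as $A_{m-1,n}d_m\subseteq W_{m,n}'$) while the Laurent variable $t_m$ does not, so all the translation/differencing in the proof of the Lemma \ref{lem3.9}-analogue must use only shifts by $\gamma\in\Z^{m-1}$; and one must check that the brackets $[\Omega^{(l)}_{\cdots},\,t^{k\gamma}\xi_I\partial]$ for $\partial\in\{d_1,\dots,d_m,\partial/\partial\xi_j\}$ all still land in $W_{m,n}'$, which they do because $\Omega^{(l)}_{\cdots}\in U(\Vir[\mu'])\subseteq U(W_{m,n}')$ and $t^{k\gamma}\xi_I\partial\in W_{m,n}'$. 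Once this is verified the rest is a line-by-line copy of Lemma \ref{lem3.10}, with $m$ replaced by $m-1$ in the degree bounds.
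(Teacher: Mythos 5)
Your proposal is correct, but it takes a genuinely different route from the paper. You re-run the entire $A$-cover machinery (the analogues of Lemma \ref{lem3.9} and Lemma \ref{lem3.10}) directly inside $W_{m,n}'$: this works because the relevant solenoidal subalgebra $A_{m-1,0}d_\mu\subseteq W_{m,n}'$ is isomorphic to $\Vir[(\mu_1,\dots,\mu_{m-1})]$, so the relations $\Omega^{(l)}_{\alpha,\beta,\gamma}\in\ann(M)$ from \cite{BF1} are available for $\alpha,\beta,\gamma\in\Z^{m-1}$, the bracket computations of Lemma \ref{lem3.9} stay inside $\bar U'$, and $d_m$ is absorbed into the general $d_{\mu'}$ with $(\mu',\gamma)=0$ since $\gamma_m=0$ --- all points you correctly flag. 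The paper instead avoids redoing any of this: it writes $W_{m,n}'=W_{m-1,n}+L$ where $L=\sum_{i=1}^{m-1}A_{m-1,n}(d_i+d_m)+\sum_j A_{m-1,n}\frac{\partial}{\partial\xi_j}\cong W_{m-1,n}$ is a second (``tilted'') copy of $W_{m-1,n}$, observes that $d_m$ is central in $W_{m,n}'$ and hence acts as a scalar on the simple module $M$, so that $M$ is cuspidal over both $W_{m-1,n}$ and $L$, applies Lemma \ref{lem3.10} verbatim to each, and bounds $\dim\hat M_\lambda$ by the sum $2N$ of the two resulting bounds using $K(M)\supseteq K_1(M)+K_2(M)$. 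The paper's argument is shorter and requires no re-verification of the $\Omega$-relation input; yours is more uniform and would generalize to situations where such a two-copy decomposition is not available, at the cost of the bookkeeping you describe. One small point of care in your write-up: the conclusion should be phrased as saying each weight space of $(W_{m,n}'\otimes M)/K(M)$ is spanned by the images of $(W_{m,n}'\otimes M_0)_\lambda$ together with $\sum_{\|\alpha\|\le(m-1)l_0}(t^\alpha\xi_I\Delta_{m,n}\otimes M)_\lambda$, both of which are uniformly bounded; but this is exactly the same final step as in Lemma \ref{lem3.10} and is not a gap.
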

\begin{proof}Let $L=\sum_{i=1}^{m-1} A_{m-1,n}(d_i+d_m)+\sum_{j=1}^n A_{m-1,n}\frac{\partial}{\partial \xi_j}$.
 Then it is clear that
 $L\cong W_{m-1,n}$. From $[d_m, W_{m,n}']=0$ and $M$ is a simple weight module, we see that $d_m$ acts as a scalar on $M$. Therefore, $M$ is a cuspidal $L$ module and
 a cuspidal $W_{m-1,n}$ module. Let $K_1(M)=\{ \sum_{i=1}^k x_i\otimes v_i\in W_{m-1,n}\otimes M|  \sum_{i=1}^k(ax_i) v_i=0,\forall a\in A_{m-1,n}\}$ and
 $K_2(M)=\{ \sum_{i=1}^k x_i\otimes v_i\in L \otimes M|  \sum_{i=1}^k(ax_i) v_i=0,\forall a\in A_{m-1,n}\}$. Then applying Lemma \ref{lem3.10} to $W_{m-1,n}$
  module $M$ and $L$ module $M$ respectively,  we have that $(W_{m-1,n}\otimes M)/ K_1(M)$ and $(L\otimes M)/ K_2(M)$ are cuspidal.  Therefore, there exists a $N\in \N$, such that $\dim (W_{m-1,n}\otimes M)_{\lambda}/ K_1(M)_{\lambda}\le N$ and $\dim (L\otimes M)_{\lambda}/ K_2(M)_{\lambda}\le N$ for all $\lambda\in {\rm supp}(\hat{M})$. Note that $K(M)\supseteq K_1(M)+K_2(M)$. Thus $\dim \hat{M}_{\lambda} =\dim (W_{m,n}'\otimes M)_{\lambda}/K(M)_{\lambda}\le \dim ((W_{m-1,n}+L)\otimes M)/(K_1(M)+K_2(M))_{\lambda}=\dim ((W_{m-1,n}\otimes M+K_2(M))/((K_1(M)+K_2(M))_{\lambda}+(L\otimes M+K_1(M))/((K_1(M)+K_2(M)))_{\lambda} \le \dim (W_{m-1,n}\otimes M)_{\lambda}/ K_1(M)_{\lambda}+\dim (L\otimes M)_{\lambda}/ K_2(M)_{\lambda}\le N+N=2N$ for all $\lambda\in {\rm supp}(\hat{M})$. The lemma follows. \end{proof}

\begin{theorem}\label{the4.4} Let $(m,n)\in \N\times \Z_+$.  Any nontrivial  simple cuspidal $W_{m,n}'$ module is isomorphic to a simple quotient of a tensor module $\Gamma(\alpha, V)$ for  a finite-dimensional simple $\gl(m-1,n)$ module $V$ and an $\alpha\in \bC^m$. \end{theorem}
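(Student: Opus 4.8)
The plan is to mimic the argument of Theorem \ref{the3.11}, now using the $A_{m-1,n}$-cover machinery developed for $W_{m,n}'$ in Lemmas \ref{lem4.2} and \ref{lem4.3}. Let $M$ be a nontrivial simple cuspidal $W_{m,n}'$ module. First I would observe that $W_{m,n}'M=M$: since $W_{m,n}'\cong\tilde W_{m-1,n}$ acts on the simple module $M$, the submodule $W_{m,n}'M$ is nonzero, hence equals $M$ (the only way $W_{m,n}'M$ could be zero is for the trivial one-dimensional module, which is excluded). Consequently the cover $\hat M=(W_{m,n}'\otimes M)/K(M)$ is defined, and the natural map $\pi:\hat M\to M$, $x\otimes v+K(M)\mapsto xv$, is a surjective homomorphism of $W_{m,n}'$ modules.

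Next I would invoke Lemma \ref{lem4.3} to conclude that $\hat M$ is cuspidal as an $AW_{m,n}'$ module. A cuspidal module over $AW_{m,n}'$ has finite length (its weight multiplicities are uniformly bounded and its support lies in a single coset of $\Z^{m-1}$, so any strictly increasing chain of $AW_{m,n}'$ submodules has bounded length), so $\hat M$ admits a finite composition series
\begin{equation}
0=\hat M^{(0)}\subset \hat M^{(1)}\subset\cdots\subset \hat M^{(s)}=\hat M
\end{equation}
with each $\hat M^{(i)}/\hat M^{(i-1)}$ a simple $AW_{m,n}'$ module. Let $l$ be minimal with $\pi(\hat M^{(l)})\neq 0$. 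Since $M$ is simple as a $W_{m,n}'$ module, $\pi(\hat M^{(l)})=M$ and $\pi(\hat M^{(l-1)})=0$, so $\pi$ induces a surjection of $W_{m,n}'$ modules from the simple cuspidal $AW_{m,n}'$ module $\hat M^{(l)}/\hat M^{(l-1)}$ onto $M$. Applying Lemma \ref{lem4.2}, this simple cuspidal $AW_{m,n}'$ module is isomorphic to a tensor module $\Gamma(\alpha,V)$ for some finite-dimensional simple $\gl(m-1,n)$ module $V$ and some $\alpha\in\bC^m$. Hence $M$ is a simple quotient of $\Gamma(\alpha,V)$, as claimed.

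The only genuinely new point compared to Section 3 is the cuspidality of the cover, which is exactly the content of Lemma \ref{lem4.3}; the finite-length argument and the passage to a composition factor are formal. I expect the main obstacle (already handled in Lemma \ref{lem4.3}) to be controlling $K(M)$: one must bound $\dim\hat M_\lambda$, and the trick there is that $K(M)$ contains both $K_1(M)$ coming from the $W_{m-1,n}$-cover and $K_2(M)$ coming from the $L$-cover with $L\cong W_{m-1,n}$ built from the $d_i+d_m$, so that the two bounded quotients together account for all of $W_{m,n}'=W_{m-1,n}+A_{m-1,n}d_m$ (using that $d_m$ acts by a scalar on $M$, which holds because $[d_m,W_{m,n}']=0$ and $M$ is simple). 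With Lemmas \ref{lem4.2} and \ref{lem4.3} in hand the proof of Theorem \ref{the4.4} is a direct transcription of the proof of Theorem \ref{the3.11}, so it should be short.
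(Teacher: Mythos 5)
Your proposal is correct and follows exactly the route the paper takes: the paper's proof of Theorem \ref{the4.4} is literally ``similar to that of Theorem \ref{the3.11},'' i.e.\ form the $A_{m-1,n}$-cover, invoke Lemma \ref{lem4.3} for cuspidality, pass to the lowest composition factor mapping onto $M$, and apply Lemma \ref{lem4.2}. Your added remarks (why $W_{m,n}'M=M$, why the cover has finite length) are correct fillings-in of details the paper leaves implicit.
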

\begin{proof}The proof is similar as that of Theorem \ref{the3.11}.\end{proof}


\section{Main result}

In this section, we are going to classification the simple  weight $W$ modules with finite-dimensional weight spaces which are not  cuspidal. 
 Let  $m,n\in \N$, $e_1,\dots,e_m$ be the standard basis of $\Z^m$ and $W=W_{m,n}$. Then $\dim W_\alpha=2^n(m+n),\forall \alpha\in \Z^m$.

\begin{lemma}\label{lem5.1}
Let $\alpha,\beta\in\Z^m$ with $\beta\neq 0$, then $[W_\alpha,W_\beta]=W_{\alpha+\beta}$.
\end{lemma}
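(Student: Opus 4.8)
The statement $[W_\alpha, W_\beta] = W_{\alpha+\beta}$ for $\beta \neq 0$ is an equality of $2^n(m+n)$-dimensional spaces, so it suffices to show $W_{\alpha+\beta} \subseteq [W_\alpha, W_\beta]$. The plan is to exhibit explicitly, for each standard basis vector $t^{\alpha+\beta}\xi_I d_i$ and $t^{\alpha+\beta}\xi_I \frac{\partial}{\partial\xi_j}$ of $W_{\alpha+\beta}$, a bracket of an element of $W_\alpha$ with an element of $W_\beta$ that produces it, possibly after correcting by lower-order terms that have already been shown to lie in the span. The key computational inputs are the standard bracket formulas in $W_{m,n}$: for $d = t^\gamma\xi_J \partial$ and $d' = t^\delta\xi_K \partial'$ one has $[d,d'] = t^\gamma\xi_J\partial(t^\delta\xi_K)\partial' - (-1)^{|d||d'|} t^\delta\xi_K\partial'(t^\gamma\xi_J)\partial$, and the crucial point is that since $\beta \neq 0$, one can choose an index $k$ with $\beta_k \neq 0$, so that $d_k(t^\beta) = \beta_k t^\beta$ is a nonzero multiple of $t^\beta$.

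First I would handle the case $n = 0$ (or equivalently $I = \varnothing$) as a warm-up: for $\beta_k \neq 0$, $[t^\alpha d_k, t^\beta d_i] = \beta_k t^{\alpha+\beta} d_i - \alpha_i t^{\alpha+\beta} d_k$. If $i \neq k$ this immediately gives $t^{\alpha+\beta}d_i$ up to adding a multiple of $t^{\alpha+\beta}d_k$, and taking $i = k$ gives $(\beta_k - \alpha_k) t^{\alpha+\beta} d_k$; one then does a small argument choosing a second suitable index or a different $t^\alpha$-exponent (e.g. replacing $\alpha$ by $\alpha - e_k$ and $\beta$ by $\beta + e_k$, which changes the scalar) to get $t^{\alpha+\beta}d_k$ itself. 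Then I would bring in the odd directions: compute $[t^\alpha \xi_I d_k, t^\beta \partial']$ and $[t^\alpha d_k, t^\beta \xi_I \partial']$ where $\partial' \in \{d_i, \frac{\partial}{\partial\xi_j}\}$, using the $\xi$-grading to induct on $|I|$. Concretely, $[t^\alpha d_k, t^\beta\xi_I \frac{\partial}{\partial\xi_j}]$ contributes $\beta_k t^{\alpha+\beta}\xi_I\frac{\partial}{\partial\xi_j}$ plus a term supported on a smaller $\xi$-degree coming from $\frac{\partial}{\partial\xi_j}(t^\alpha)$-type cross terms — but $\frac{\partial}{\partial\xi_j}(t^\alpha) = 0$ since $t^\alpha$ has no $\xi$'s, so this particular bracket is clean. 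Similarly $[t^\alpha\xi_s \frac{\partial}{\partial\xi_s} \cdot, \ldots]$-type brackets let one adjust which $\xi_I$ appears. The induction is: assuming all $t^{\alpha'+\beta'}\xi_{I'}\partial'$ with $|I'| < |I|$ lie in the appropriate commutators (for all exponents summing correctly), produce $t^{\alpha+\beta}\xi_I\partial'$.

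The main obstacle I anticipate is purely bookkeeping: keeping track of the sign factors $(-1)^{\tau(\cdot,\cdot)}$ and the parity signs $(-1)^{|d||d'|}$ when $\xi_I$ splits across the two factors of a bracket, and making sure that in the "diagonal" sub-cases (where the naive bracket returns a scalar multiple $(\beta_k - \alpha_k)$ or similar that could vanish) one always has an alternative choice of representatives making the scalar nonzero — this is exactly where $\beta \neq 0$ is used, and one should check it is genuinely enough. A clean way to organize this, which I would adopt, is to first prove $W_{\alpha+\beta} \subseteq [W_0, W_{\alpha+\beta}]$-style reductions, or better, to note $[W_\alpha, W_\beta] \supseteq [[W_0,W_\alpha],W_\beta] \supseteq \ldots$ and reduce to the case $\alpha = 0$ by acting with $d_k \in W_0$; then only $[W_0, W_\beta] = W_\beta$ (for $\beta\neq 0$) and a shift argument are needed, which compresses the casework considerably. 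I would present the reduction to $\alpha = 0$ first, then dispatch $[W_0,W_\beta]=W_\beta$ by the explicit brackets above with the $\xi$-degree induction.
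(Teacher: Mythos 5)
There is a genuine gap. Your strategy correctly reduces to exhibiting each basis vector of $W_{\alpha+\beta}$ as a bracket, and you even spot the key ``clean'' bracket $[t^\alpha d_k, t^\beta\xi_I\frac{\partial}{\partial\xi_j}]=\beta_k t^{\alpha+\beta}\xi_I\frac{\partial}{\partial\xi_j}$ (clean because $\frac{\partial}{\partial\xi_j}$ annihilates $t^\alpha$), which is exactly half of the paper's proof. But you never produce the vectors $t^{\alpha+\beta}\xi_I d_i$, and both fallback ideas you offer for the problematic ``diagonal'' case are invalid. Replacing $\alpha$ by $\alpha-e_k$ and $\beta$ by $\beta+e_k$ is not allowed: the statement fixes the graded components $W_\alpha$ and $W_\beta$, and $[W_{\alpha-e_k},W_{\beta+e_k}]$ is a different space. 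The proposed reduction to $\alpha=0$ is also unjustified: $[W_0,W_\beta]=W_\beta$ gives no information about $[W_\alpha,W_\beta]$, and ``shifting'' by $t^\alpha$ is not a Lie algebra operation. The obstruction is real, not bookkeeping: for $n=0$ and $\alpha=\beta$ the span of the brackets $[t^\alpha d_j,t^\alpha d_i]=\alpha_j t^{2\alpha}d_i-\alpha_i t^{2\alpha}d_j$ is $\{(C-C^{T})\alpha\,|\,C\in M_m(\bC)\}$, a proper $(m-1)$-dimensional subspace of $W_{2\alpha}$ (zero when $m=1$), so your $n=0$ ``warm-up'' is false and no purely even computation can recover $t^{\alpha+\beta}d_i$ in general. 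This is why the standing hypothesis $n\geq 1$ of Section 5 is essential.

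The paper's fix, which your sketch does not reach, is to obtain $t^{\alpha+\beta}\xi_I d_i$ from an \emph{odd--odd} bracket:
$$[t^\alpha\xi_{I}\tfrac{\partial}{\partial\xi_1},\,t^\beta\xi_1 d_i]=t^{\alpha+\beta}\xi_{I}d_i+(-1)^{|I|}\alpha_i\,t^{\alpha+\beta}\xi_1\xi_I\tfrac{\partial}{\partial\xi_1}.$$
The correction term is of $\frac{\partial}{\partial\xi}$-type and hence already known to lie in $[W_\alpha,W_\beta]$ by the first bracket, so no induction on $|I|$, no case analysis on vanishing scalars, and no hypothesis beyond $\beta\neq 0$ is needed. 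To repair your proof you should drop the $n=0$ warm-up and the reduction to $\alpha=0$, and instead add this second family of brackets.
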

\begin{proof}
Let $i'\in\{1,\dots,m\}$ such that $\beta_{i'}\neq 0$. For any $i\in\{1,\dots,m\},j\in\{1,\dots,n\},\ k\in\Z_+,I\subseteq \{1,\dots,n\}$, we have
\begin{equation*}\begin{split}
&[t^\alpha d_{i'},t^\beta\xi_I\frac{\partial}{\partial\xi_j}]=\beta_{i'}t^{\alpha+\beta}\xi_I\frac{\partial}{\partial\xi_j},\\
&[t^\alpha\xi_{I}\frac{\partial}{\partial\xi_1},t^\beta\xi_1 d_i]=t^{\alpha+\beta}\xi_{I}d_i+(-1)^{|I|}\alpha_it^{\alpha+\beta}\xi_1\xi_I\frac{\partial}{\partial\xi_1}.
\end{split}\end{equation*}
So $t^{\alpha+\beta}\xi_I\frac{\partial}{\partial\xi_j},t^{\alpha+\beta}\xi_Id_i\in[W_\alpha,W_\beta]$. Hence the lemma holds.
\end{proof}

From now on, we will assume that $M$ is a simple weight $W$ module with finite-dimensional weight spaces which is not  cuspidal. Let  $\lambda$ be a nonzero weight of $M$.


Now let $G$ be a subgroup of $\Z^m$, $\beta$ be a nonzero element of $\Z^m$ with $\Z^m=G\oplus\Z\beta$. Then $W$ has a triangular decomposition $W=W_{G-\N\beta}\oplus W_G\oplus W_{G+\N\beta}$.
Suppose $X$ is a simple weight $W_G$ module. Turn $X$ into a $W_G\oplus W_{G+\N\beta}$ module by setting $W_{G+\N\beta}\cdot X=0$.
Let $M(G,\beta,X)=U(W)\bigotimes_{U(W_G\oplus W_{G+\N\beta})}X$ be the induced $ W$ module. Then $M(G,\beta,X)$ has a unique simple quotient, which will be denoted by $L(G,\beta,X)$.   We call $L(G,\beta,X)$ a {\it module of highest weight type} if $X$ is a cuspidal $g_G$ module.

The following result is well-known.

\begin{lemma}\label{lem5.2}Let $M$ be a  $W_{1.0}$ weight module with finite-dimensional weight spaces and  ${\rm supp}(M)\subseteq \lambda+\Z$. If for any $v\in M$, there exists some $N(v)\in \N$ such that $t_1^i d_1 v=0,\forall i\ge N(v)$. Then the weight set of $M$ is upper bounded, i.e., ${\rm supp}(M)\subseteq \lambda_0-\Z_+$ for some  $\lambda_0\in \bC$. \end{lemma}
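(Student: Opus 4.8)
The plan is to argue by contradiction. Suppose $\operatorname{supp}(M)$ is not bounded above, and abbreviate $e_i:=t_1^id_1$, so that $[e_i,e_j]=(j-i)e_{i+j}$, $e_0=d_1$ acts on $M_\mu$ by the scalar $\mu$, $e_i$ shifts weights by $i$, and $W_{1,0}=\mathfrak n_-\oplus\mathbb{C}e_0\oplus\mathfrak n_+$ with $\mathfrak n_+=\bigoplus_{i\geq1}\mathbb{C}e_i$ and $\mathfrak n_-=\bigoplus_{i\leq-1}\mathbb{C}e_i$. The first step is a \emph{propagation property}: if a weight vector $v$ satisfies $e_iv=0$ for all $i\geq N$, then $e_iw=0$ for all $i\geq N$ and all $w\in U(\mathfrak n_+)v$. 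This is an immediate induction on the length of a monomial $e_{j_1}\cdots e_{j_r}v$ with $j_l\geq1$, using $e_i(e_jw')=e_j(e_iw')+(j-i)e_{i+j}w'$ and $i+j\geq N$ when $i\geq N$, $j\geq1$. In particular $U(\mathfrak n_+)v\subseteq M^{N}:=\bigcap_{i\geq N}\ker(e_i|_M)$, a subspace stable under $\mathfrak b_+:=\mathbb{C}e_0\oplus\mathfrak n_+$ and on which $e_i$ acts as $0$ for all $i\geq N$.

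The crucial step is that every $e_k$, $k\geq1$, acts locally nilpotently on $M$. If not, pick a weight vector $v\in M_\mu$ with $v_j:=e_1^jv\neq0$ for all $j\geq0$, and fix an odd $N\geq N(v)$; by the propagation property $e_Nv_j=0$ for all $j$, so each $v_j$ is a lowest weight vector for $\langle e_{-N},e_0,e_N\rangle\cong\mathfrak{sl}_2$. Choosing $j_0$ large (and outside a fixed residue class mod $N$) one arranges that $\mu+j_0$, hence every $\mu+j_0+pN$ with $p\geq0$, avoids the weights at which the relevant $\mathfrak{sl}_2$-Verma module degenerates; then all scalars $c_q$ in $e_Ne_{-N}^{\,q}v_m=c_q e_{-N}^{\,q-1}v_m$ are nonzero, so $e_{-N}^{\,p}v_{j_0+pN}\neq0$ for all $p$, and all these vectors lie in the single weight space $M_{\mu+j_0}$. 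Applying $e_N$ repeatedly — it carries the family $\{e_{-N}^{\,p}v_{j_0+pN}\}_p$ bijectively, up to nonzero scalars, to the analogous family based at level $j_0+N$, killing the bottom vector $v_{j_0}$ — gives by induction that $\{e_{-N}^{\,p}v_{j_0+pN}:p\geq0\}$ is linearly independent, contradicting $\dim M_{\mu+j_0}<\infty$. Granting local nilpotency, the propagation property exhibits $U(\mathfrak n_+)v$ as a cyclic module over the finite-dimensional nilpotent Lie algebra $\mathfrak n_+\big/\bigoplus_{i\geq N}\mathbb{C}e_i$ on which every element acts locally nilpotently; such a module is finite-dimensional (pass to the quotient by a central element, which then acts nilpotently on the cyclic module, and induct on dimension), so $U(\mathfrak n_+)v$ is finite-dimensional for every weight vector $v$.

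To conclude: since $U(\mathfrak n_+)v$ is finite-dimensional and $\mathfrak n_+$-stable, every vector in its top weight space is annihilated by all $e_i$, $i\geq1$. As $\operatorname{supp}(M)$ is unbounded above, applying this to vectors of arbitrarily large weight produces nonzero vectors $w_1,w_2,\dots$ of strictly increasing nonzero weights $\theta_1<\theta_2<\cdots$ with $e_iw_j=0$ for all $i\geq1$. For $k\geq1$ one has $e_ke_{-k}w_j=[e_k,e_{-k}]w_j=-2k\theta_jw_j\neq0$, hence $e_{-(\theta_j-\theta_1)}w_j\neq0$; and if $c_1w_1+\sum_{j\geq2}c_je_{-(\theta_j-\theta_1)}w_j=0$ with $J$ the largest index having $c_J\neq0$, then applying $e_{\theta_J-\theta_1}$ kills the $w_1$-term and every term with $2\leq j<J$ (since $w_j$ is singular and $\theta_J-\theta_j\geq1$) and sends the $j=J$ term to $-2(\theta_J-\theta_1)\theta_Jc_Jw_J\neq0$, forcing $c_J=0$ and then all $c_j=0$. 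Thus $M_{\theta_1}$ contains the infinite linearly independent set $\{w_1\}\cup\{e_{-(\theta_j-\theta_1)}w_j:j\geq2\}$, contradicting finite-dimensionality of weight spaces. The main obstacle is the local nilpotency step: it is where the hypothesis on the $e_i$, the finiteness of the weight spaces, and a careful choice avoiding degenerate $\mathfrak{sl}_2$-weights all come together; the finite-dimensionality of $U(\mathfrak n_+)v$ is a second, more routine, technical ingredient.
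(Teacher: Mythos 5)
The paper does not prove this lemma---it is cited as ``well-known''---so your argument has to stand on its own, and for the most part it does. The propagation property, the $\mathfrak{sl}_2$-computation producing infinitely many independent vectors in a single weight space $M_{\mu+j_0}$, the reduction of $U(\mathfrak{n}_+)v$ to a cyclic module over the finite-dimensional nilpotent algebra $\mathfrak{n}_+/\bigoplus_{i\geq N}\bC e_i$, the extraction of singular vectors from the top weight space, and the final independence argument via $e_{\theta_J-\theta_1}$ all check out: with $[e_i,e_j]=(j-i)e_{i+j}$ one gets $[e_a,e_{-a}]=-2ae_0$ and $e_Ne_{-N}^{q}v_m=-N^2q(\lambda_m-q+1)e_{-N}^{q-1}v_m$ where $\lambda_m=2(\mu+m)/N$, so the non-degeneracy you need amounts to $2(\mu+j_0)/N\notin\Z_{<0}$, which indeed holds for all sufficiently large $j_0$ and is inherited by all the shifted base points $j_0+rN$.

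The one place that needs patching is the local nilpotency step. You assert it for every $e_k$, $k\geq1$, but the written argument only treats $k=1$: you negate the claim by picking $v$ with $e_1^jv\neq0$ for all $j$. Downstream you genuinely need all of $e_1,\dots,e_{N-1}$ to act locally nilpotently on $U(\mathfrak{n}_+)v$---in particular $e_{N-1}$, which is exactly the central element your dimension induction quotients by first---and non-nilpotency of some $e_k$ with $k\geq2$ does not reduce to non-nilpotency of $e_1$. The fix is routine but must be said: if $e_k^jv\neq0$ for all $j$, choose $N\geq N(v)$ with $k\mid N$ and run the identical argument on the family $e_{-N}^{\,p}\,e_k^{\,j_0+pN/k}v$, which again lies in the single weight space $M_{\mu+j_0k}$ and satisfies the same $\mathfrak{sl}_2$ recursion. (Relatedly, ``every element acts locally nilpotently'' on the quotient algebra is more than you have established; you have it only for the basis vectors $e_k$, which suffices because the successive central elements in your induction can be taken to be basis vectors.) With these repairs the proof is complete.
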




\begin{lemma}\label{lem5.3}If $m=1$, then $M$ is a highest (or lowest) weight module.\end{lemma}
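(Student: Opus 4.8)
The plan is to show that when $m=1$, a simple weight module $M$ with finite-dimensional weight spaces that is \emph{not} cuspidal must have support bounded above or below, which is exactly the statement that it is a highest or lowest weight module (and then, by simplicity, a quotient of the appropriate Verma-type module $M(0,\beta,X)$ with $X$ a weight module over $W_{0,n}=\Delta_{0,n}$-type piece). Here $W=W_{1,n}$ and $\mathrm{supp}(M)\subseteq\lambda+\Z$. The key dichotomy I would aim for is: either for every $v\in M$ the operators $t_1^i\xi_I\partial$ with $i\gg 0$ annihilate $v$ (pushing the support up), or the analogous statement with $i\ll 0$ holds (pushing it down); if neither held, I would derive that $M$ is cuspidal, contradicting the hypothesis.

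First I would fix a nonzero weight $\lambda$ and study the action of the rank-one Witt subalgebra $W_{1,0}=A_{1,0}d_1$, together with the full $W_{1,n}$. The main technical input is Lemma \ref{lem3.9} (applied with $m=1$): there exists $l_0\in\N$ such that $\sum_{i=0}^{l_0}(-1)^i\binom{l_0}{i}t_1^{\alpha+i\gamma}\xi_I\partial\cdot t_1^{\beta-i\gamma}d_\mu v=0$ for all $\partial\in\Delta$, $I\subseteq\{1,\dots,n\}$, $v\in M$, and all integers $\alpha,\beta,\gamma$. With $m=1$ take $\gamma=1$; this is a genuine finite-difference recursion in a single integer variable, so it says that for each fixed $v$ the ``sequence'' $k\mapsto t_1^k\xi_I\partial\cdot t_1^{\beta-k}d_1 v$ satisfies a constant-coefficient linear recursion of length $l_0+1$. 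I would use this to show that the span of $\{t_1^k\xi_I\partial v : k\in\Z,\ \partial\in\Delta,\ I\}$ modulo the ``one weight space'' data is finite-dimensional in a controlled way, i.e., that $M$ restricted to a half-line of weights is uniformly bounded once we know the action of $d_1$-type elements stabilizes on one side.

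Next I would invoke Lemma \ref{lem5.2}: it is precisely tailored to conclude that a $W_{1,0}$-weight module with finite-dimensional weight spaces on which each vector is killed by $t_1^i d_1$ for $i\gg 0$ has support bounded above. So the argument reduces to proving the following: either (a) for every $v\in M$ there is $N(v)$ with $t_1^i d_1 v=0$ for all $i\ge N(v)$, or (b) the mirror statement with $t_1^{-i}d_1$, $i\ge N(v)$, holds. Suppose both fail. Then there are vectors on which arbitrarily high positive powers $t_1^i d_1$ act nontrivially and vectors on which arbitrarily high negative powers act nontrivially. Combining this with Lemma \ref{lem5.1} (which gives $[W_\alpha,W_\beta]=W_{\alpha+\beta}$ for $\beta\ne0$, so $M$ is ``generated across'' its support) and the finite-difference relation above, I would argue that the weight spaces are then forced to be uniformly bounded, contradicting non-cuspidality. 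The cleanest way to package this: if neither (a) nor (b) holds, the $A_{1,0}$-cover construction of Section 3 (cf. Lemma \ref{lem3.10}) applies — one shows the relevant cover is cuspidal — and then $M$ itself, being a quotient, is cuspidal, a contradiction.

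The main obstacle I anticipate is exactly the step ``if neither side is eventually annihilating, then $M$ is cuspidal'': this is where one must convert the length-$l_0+1$ finite-difference relations of Lemma \ref{lem3.9} into an honest uniform bound on $\dim M_\mu$. The subtlety is that the odd variables $\xi_1,\dots,\xi_n$ contribute the factor $2^n(m+n)$ to $\dim W_\alpha$, so one must run the recursion simultaneously over all $\partial\in\Delta$ and all $I\subseteq\{1,\dots,n\}$, checking that the finitely many recursions interlock rather than proliferate; I expect this is handled by noting that the $\xi$-degrees only decrease under $\partial/\partial\xi_j$ and are bounded by $n$, so the bookkeeping terminates. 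Once cuspidality is excluded on (say) the positive side, Lemma \ref{lem5.2} finishes the ``highest weight'' case verbatim, and the ``lowest weight'' case is the image under the obvious involution $t_1\mapsto t_1^{-1}$. Finally, simplicity of $M$ together with bounded-above support yields that $M\cong L(G,\beta,X)$ for $G=0$, $\beta=1$, and $X=M_{\lambda_0}$ the top weight space regarded as a module over $W_0$ (the degree-zero part), completing the identification with a module of highest weight type.
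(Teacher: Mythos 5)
Your overall skeleton (reduce to ``every $v$ is eventually annihilated by $W_i$ for $i\gg 0$, or the mirror statement,'' then invoke Lemma \ref{lem5.2}) is the right shape, but the mechanism you propose for the key step fails. You want to prove the dichotomy by contradiction, assuming neither side holds and deriving that $M$ is cuspidal via Lemma \ref{lem3.9} and the cover construction of Lemma \ref{lem3.10}. Both of those results take cuspidality of $M$ as a \emph{hypothesis}: Lemma \ref{lem3.9} produces the finite-difference relation only for cuspidal modules, and Lemma \ref{lem3.10} says the cover of a cuspidal module is cuspidal. Since your $M$ is assumed non-cuspidal, neither is available, and there is no route from ``neither (a) nor (b) holds'' to a uniform bound on the weight multiplicities. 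This is a genuine gap, not a bookkeeping issue with the odd variables.

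The missing idea is much more elementary and goes in the opposite logical direction: use non-cuspidality \emph{positively} to manufacture a single vector killed by a whole half of the algebra. Since $\dim W_k=2^n(n+1)$ for $m=1$ and $M$ is not cuspidal, there is $k$ with $\dim M_{\lambda-k}$ strictly larger than $2^n(n+1)(\dim M_{\lambda}+\dim M_{\lambda+1})$; the linear map $w\mapsto (W_k w, W_{k+1}w)$ from $M_{\lambda-k}$ into $M_\lambda^{\oplus\dim W_k}\oplus M_{\lambda+1}^{\oplus\dim W_{k+1}}$ then has nonzero kernel, giving a homogeneous $w\ne 0$ with $W_kw=W_{k+1}w=0$. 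Lemma \ref{lem5.1}, together with the fact that every integer $i\ge k^2$ is a nonnegative integer combination of $k$ and $k+1$, yields $W_iw=0$ for all $i\ge k^2$. Then $M'=\{v\in M\mid \dim W_{\N}v<\infty\}$ is a nonzero submodule, hence equals $M$ by simplicity, and Lemma \ref{lem5.2} bounds the support from above; the case $k<0$ gives the lowest weight alternative. Your closing identification of $M$ with a highest weight type module from bounded support is fine once this is in place.
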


\begin{proof}Since $M$ is not cuspidal, there is a $k\in \Z$, such that $\dim M_{-k+\lambda}>2^n(n+1)(\dim M_{\lambda}+\dim M_{\lambda-1})$. Without  lost of generality, we may assume that $k\in \N$. Then there exists a nonzero homogeneous element $w\in M_{-k+\lambda}$ such that $ W_{k}w= W_{k+1}w=0$. Hence from Lemma \ref{lem5.1}, $ W_i w=0,\forall i\ge k^2$.

It is easy to see that $M'=\{v\in M| \dim  W_{\N} v<\infty\}$ is a $ W$ submodule of $M$ with $w\in M'$. Hence $M'=M$. From Lemma \ref{lem5.2}, we know the weight set of $M$ is upper bounded. So $M$ has to be a highest weight module.
\end{proof}

We  regard the general linear group ${\rm GL}_m(\Z)$  as a subgroup of the automorphism group of $W_{m,n}$ by $B(t^{\alpha}\xi_Id_i)=t^{\alpha B^T}\xi_Id_{e_i B^{-1}}$ and $B(t^{\alpha}\xi_I\frac{\partial}{\partial \xi_j})=t^{\alpha B^T}\xi_I\frac{\partial}{\partial \xi_j}$ for all $i=1,2,\ldots,m;j=1,2,\ldots,n;\alpha\in\Z^m; I\subseteq \{1,2,\ldots,n\}, B\in {\rm GL}_m(\Z)$.

Let generic $\mu$ and $Vir[\mu]$ be as defined in Section 2.
\begin{lemma}\label{lem5.4}Suppose that $m>1$. Let $G$ be a subgroup of $\Z^m$, $\beta$ be a nonzero element of $\Z^m$ with $\Z^m=G\oplus\Z\beta$, $X$ be a simple weight  $ W_G$ module. Then $L(G,\beta, X)$ has finite-dimensional weight spaces if and only if $X$ is a cuspidal $ W_G$ module. \end{lemma}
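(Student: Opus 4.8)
The plan is to prove both directions of the equivalence, with the "only if" direction being the substantive one. For the easy direction, suppose $X$ is a cuspidal $W_G$ module. Since $W_G \cong W_{m-1,n}'$ after a change of coordinates (identifying $G$ with $\Z^{m-1}$ via an element of $\mathrm{GL}_m(\Z)$), the weight spaces of $X$ are uniformly bounded. The module $M(G,\beta,X) = U(W)\otimes_{U(W_G\oplus W_{G+\N\beta})} X \cong U(W_{G-\N\beta})\otimes X$ as a vector space, and by the PBW theorem $U(W_{G-\N\beta})$ has a basis indexed by finite multisets of the standard basis elements of $W_{G-\N\beta}$, which lie in weights of the form $\gamma - k\beta$ with $\gamma\in G$, $k\in\N$. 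For a fixed weight $\nu$, only finitely many such monomials land in the degree that maps $X_{S}$ into $M_\nu$ for the bounded slices $S$, so each weight space of $M(G,\beta,X)$ is finite-dimensional; hence so is each weight space of its simple quotient $L(G,\beta,X)$.

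For the "only if" direction, I would argue the contrapositive: if $X$ is not cuspidal, then $L(G,\beta,X)$ has an infinite-dimensional weight space. By the change of coordinates I may assume $G = \Z e_1 + \cdots + \Z e_{m-1}$ and $\beta = e_m$, so $W_G \cong \tilde W_{m-1,n}$ (more precisely $W_{m,n}' = W_{m-1,n} + A_{m-1,n}d_m$ with $d_m$ central in $W_G$). Since $X$ is a simple weight module that is not cuspidal, $d_m$ acts by a scalar and, viewing $X$ as a $W_{m-1,n}$-module (still simple? — need to handle that $X$ need not be simple over the subalgebra, but it is at least a weight module with unbounded weight multiplicities in the $\Z^{m-1}$-direction), $X$ has weight spaces of unbounded dimension along $G$. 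The key point is that in the induced module, the subspace $1\otimes X \subseteq M(G,\beta,X)$ embeds into $L(G,\beta,X)$: the unique maximal submodule intersects $X$ trivially, because any nonzero submodule, being graded and $W_{G+\N\beta}$-stable on the top level, would have to meet the generating space $X$ which is simple over $W_G$. Therefore $L(G,\beta,X)_\lambda \supseteq X_\lambda$ for $\lambda\in\mathrm{supp}(X)$, and since these have unbounded dimension, $L(G,\beta,X)$ cannot have finite-dimensional weight spaces.

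The main obstacle I anticipate is making the claim "$1\otimes X$ embeds in the simple quotient" fully rigorous, i.e. showing the unique maximal proper submodule $N$ of $M(G,\beta,X)$ satisfies $N\cap(1\otimes X) = 0$. The standard argument: $M(G,\beta,X)$ is $\Z\beta$-graded via the $\beta$-component of the weight, with $1\otimes X$ the top (degree-$0$) piece and all other pieces in strictly negative $\beta$-degree; any proper submodule, if it met the top piece, would contain a nonzero $W_G$-submodule of $X$, hence all of $X$ by simplicity, hence (applying $W_{G-\N\beta}$ and using $W M = M$ type arguments together with Lemma \ref{lem5.1}) all of $M(G,\beta,X)$, contradiction. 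So the sum of all proper submodules is still proper and meets the top piece trivially, giving the embedding. A secondary subtlety is that the excerpt's Lemma \ref{lem5.1} is stated for $\Z^m$-graded pieces of $W$ and will be needed to see that $W_{-N\beta}$ acting on the top piece generates everything; and one should double-check that $X$ not cuspidal as a $W_G$-module indeed forces unbounded multiplicities after restricting attention to the relevant direction — this uses that $W_G$ contains a copy of $W_{m-1,n}$ and the classification/structure established in Section 4, or simply the definition of cuspidal directly. I would also remark that the case analysis uses $m>1$ so that $G\ne 0$ and $W_G$ is genuinely a Witt-type superalgebra to which Theorem \ref{the4.4} applies.
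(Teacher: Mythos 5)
Both directions of your argument contain genuine gaps.

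For the ``if'' direction, your central finiteness claim is false: when $m>1$ the induced module $M(G,\beta,X)\cong U(W_{G-\N\beta})\otimes X$ does \emph{not} have finite-dimensional weight spaces. Already at $\beta$-level $-1$ the weight-$\nu$ component is $\sum_{\mu\in{\rm supp}(X)}W_{\nu-\mu}\otimes X_{\mu}$, a direct sum over the (generically infinite) set ${\rm supp}(X)\subseteq\gamma+G$ of nonzero finite-dimensional spaces; and at level $-2$ one already has infinitely many independent monomials $x_1x_2\otimes v$ with $x_1\in W_{\delta-\beta}$, $x_2\in W_{-\delta+\gamma_0-\beta}$ for arbitrary $\delta\in G$. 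So counting PBW monomials of a fixed weight cannot work --- there are infinitely many. The correct statement is only that the quotient of $M(G,\beta,X)$ by the maximal graded submodule meeting $X$ trivially has finite-dimensional weight spaces, and this is exactly the content of the exp-polynomial machinery the paper uses: Theorem \ref{the4.4} identifies a cuspidal $X$ as a quotient of a tensor module, hence a $G$-graded exp-polynomial module, and Lemma \ref{exp} (the Billig--Zhao theorem) then gives the finite-dimensionality. Some input of this strength is unavoidable here; it cannot be replaced by a PBW count.

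For the ``only if'' direction, you conflate ``not cuspidal'' with ``has an infinite-dimensional weight space''. A simple weight $W_G$ module can have all weight spaces finite-dimensional yet not uniformly bounded (highest-weight-type modules over $W_{m-1,n}$ are of exactly this kind). In that case your embedding $1\otimes X\hookrightarrow L(G,\beta,X)$ --- which is itself correct and standard --- only shows that $L(G,\beta,X)_{\lambda}\supseteq X_{\lambda}$ is finite-dimensional at the top level, and no contradiction results; unboundedness of the dimensions $\dim X_\lambda$ across varying $\lambda$ says nothing about any single weight space of $L(G,\beta,X)$ being infinite-dimensional. The paper closes this direction differently: since ${\rm supp}(L(G,\beta,X))\subseteq\gamma+G-\Z_{+}\beta$, every nontrivial simple $\Vir[\mu]_G$-subquotient of $L(G,\beta,X)$, hence of $X$, must be of the form $L_{\Vir[\mu]}(G,\beta,X')$ with $X'$ cuspidal by the classification of simple weight modules with finite-dimensional weight spaces over higher rank Virasoro algebras \cite{LZ2}, and this forces $X$ itself to be cuspidal. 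You would need an argument of this type to repair your contrapositive.
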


\begin{proof}Replace $L(G,\beta, X)$ by $L(G,\beta, X)^g$ by a suitable $g\in {\rm GL}_m(\Z)$ if necessary, we may assume that $G=\Z e_1+\cdots+\Z e_{m-1}$ and that $\beta=e_m$. It is straightforward to verify that $ W$ is a $G$-extragraded exp-polynomial Lie superalgebra. Suppose that $X$ is a nontrivial cuspidal $ W_G$ module. From Theorem \ref{the4.4},   $X$ is a $G$-graded exp-polynomial $ W_G$ module. Then from Lemma \ref{exp}, $L(G,\beta, X)$ has finite-dimensional weight spaces. Now suppose that $L(G,\beta, X)$ has finite dimensional weight spaces. Note that the support set of $L(G,\beta,X)$ is contained in $\gamma+G-\Z_+ \beta$ for some $\gamma\in{\rm supp}(X)$. Hence any nontrivial simple $\Vir[\mu]_G$ sub-quotient of $L(G,\beta,X)$ has to be isomorphic to $L_{\Vir[\mu]}(G,\beta,X')$ for some simple cuspidal  $\Vir[\mu]_G$ module $X'$. This implies that all simple  $\Vir[\mu]_G$ sub-quotients of $X$ are cuspidal. So we have $X$ is cuspidal.
\end{proof}

Now we will follow \cite{MZ} to deal with the case $m>1$. As we will see, the proof in \cite{MZ} still works for our algebra $ W$.

\begin{lemma}\cite[Lemma 3.2]{MZ}\label{lem5.5} Suppose that $m>1$.
After an appropriate change of variables $t_1,t_2,\ldots,t_m$ and weight $\lambda$, we may assume that $\lambda\ne 0$ and that there is a nonzero homogeneous vector $w\in V_\lambda$ such that $g_{e_i}\cdot w=0,i=1,\dots,m$.
\end{lemma}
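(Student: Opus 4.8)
This is Lemma 3.2 of \cite{MZ} adapted to $W_{m,n}$, so the strategy is to follow that argument, checking that the extra odd variables cause no trouble. We are given a non-cuspidal simple weight module $M$ with a nonzero weight $\lambda$, and we want, after acting by a suitable $B\in \mathrm{GL}_m(\Z)$ (which is an automorphism of $W_{m,n}$, hence only relabels the module and its support while preserving the hypotheses), a nonzero homogeneous $w\in M_\lambda$ annihilated by the degree-$e_i$ part $W_{e_i}$ for every $i$. Here I write $g_{e_i}$ for (elements of) $W_{e_i}$, following the notation of the statement.

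\textbf{Step 1: produce a vector killed by one "large" graded piece.} Because $M$ is not cuspidal, its weight multiplicities are unbounded, so we can find $\gamma\in\Z^m$ with $\dim M_{\lambda-\gamma}$ as large as we please — in particular strictly larger than $\dim W_0\cdot\dim M_\lambda + \dim W_{e}\cdot\dim M_{\lambda+\text{(something)}}$ for whatever finite combination of weight spaces we need to dominate. A dimension count then yields a nonzero homogeneous $w\in M_{\lambda-\gamma}$ with $W_\gamma w = 0$ and $W_{\gamma'}w=0$ for a second, linearly independent $\gamma'$. By choosing a basis change $B\in\mathrm{GL}_m(\Z)$ carrying $\gamma,\gamma'$ into the positive cone and using Lemma \ref{lem5.1} (which gives $[W_\alpha,W_\beta]=W_{\alpha+\beta}$ whenever $\beta\neq 0$), we get $W_\alpha w=0$ for all $\alpha$ in some sub-semigroup; concretely one arranges that $w$ is annihilated by $W_{ke_1}$ for all sufficiently large $k$, and similarly along the other coordinate directions after further bracketing, so that $W$ acts "locally nilpotently from above" on $w$. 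The submodule generated by $w$ is all of $M$ by simplicity, and one checks as in \cite{MZ} that the set of vectors killed by almost all of $W_{\Z_+^m}$ is a nonzero submodule, hence equals $M$.

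\textbf{Step 2: descend to a single degree per direction.} This is the genuinely MZ-style part: starting from $W_{ke_i}w=0$ for $k\gg 0$ in each direction $i$, one uses the bracket relations (again Lemma \ref{lem5.1}, plus the explicit brackets among the $t^\alpha\xi_I d_i$ and $t^\alpha\xi_I\partial_{\xi_j}$) together with finite-dimensionality of the relevant weight spaces to slide down: if $W_{(k+1)e_i}$ annihilates a finite-codimension subspace and $M_{\lambda}$-type spaces are finite-dimensional, a pigeonhole/Vandermonde argument on the one-parameter family $\{t^{se_i}\,(\cdot)\}$ forces the existence of a (possibly new, but still nonzero homogeneous and still in a single weight space obtained from $\lambda$ by the change of variables) vector $w'$ with $W_{e_i}w'=0$. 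One does this for $i=1$, then within the kernel repeats for $i=2$, etc.; the key point to verify is that each step does not destroy the annihilation achieved in the previous directions, which holds because $[W_{e_i},W_{e_j}]\subseteq W_{e_i+e_j}$ stays in the "upper" part. Finally $\lambda\neq 0$ can be arranged since $\lambda$ was assumed nonzero and the change of variables $B$ acts on weights by an invertible integer matrix, so it cannot send a nonzero weight to $0$.

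\textbf{Main obstacle.} The delicate point is Step 2 — the passage from "annihilated by $W_{ke_i}$ for all large $k$" down to "annihilated by $W_{e_i}$" — and in particular making sure the odd directions $\partial_{\xi_j}$ and the coefficients $\xi_I$ do not break the inductive descent. For $W_{m,0}$ this is \cite[Lemma 3.2]{MZ}; here one must additionally track the $\Z_2$-grading (so that $w$ and all intermediate vectors can be taken homogeneous) and note that $\dim W_\alpha = 2^n(m+n)$ is still finite, so the exp-polynomial/Vandermonde estimates go through verbatim. I expect the proof to consist essentially of citing \cite[Lemma 3.2]{MZ} and remarking that the only new ingredient, Lemma \ref{lem5.1}, has already been established and that all bracket computations are identical after carrying along the $\xi_I$'s.
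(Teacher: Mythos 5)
There is a genuine gap, and it sits exactly where you flag your ``main obstacle'': Step 2, the claimed descent from ``$W_{ke_i}w=0$ for all large $k$'' to ``$W_{e_i}w'=0$'', is not a step that can be carried out, and it is not how \cite[Lemma 3.2]{MZ} or the paper argues. All the bracket machinery (Lemma \ref{lem5.1}) pushes annihilation \emph{upward}: from $W_\gamma w=W_{\gamma'}w=0$ you get annihilation on the sub-semigroup generated by $\gamma,\gamma'$, and that is the content of the \emph{next} lemma (Lemma \ref{lem5.6}), which is deduced \emph{from} Lemma \ref{lem5.5}, not used to prove it. There is no pigeonhole or Vandermonde mechanism that recovers annihilation by $W_{e_i}$ from annihilation by $W_{ke_i}$ for large $k$; moreover, for $m>2$ two independent annihilating directions $\gamma,\gamma'$ only give a rank-$2$ cone, not anything close to all of $\Z_+^m$, so even your Step 1 does not produce the ``locally nilpotent from above'' configuration you assert.

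The actual argument avoids any descent by a single dimension count with a carefully chosen new basis. One first uses the $\Vir[\mu]$-module classification of \cite{LZ2} to locate a \emph{specific direction} of unbounded growth: $M$ has a non-cuspidal simple $\Vir[\mu]$-subquotient of highest weight type, which forces $\dim M_{\lambda-ke_1}$ ($k\in\N$) to be unbounded after a change of variables. Then one fixes the finite box $B_N(\lambda)$ of weights within distance $N$ of $\lambda$, picks $k$ with $\dim M_{\lambda-ke_1}>2^n(m+n)\sum_{\beta\in B_N(\lambda)}\dim M_\beta$, and defines the new basis $e_1'=(k+1)e_1+e_2$, $e_2'=ke_1+e_2$, $e_j'=e_1'+e_j$; the point of this choice is that every target weight $\lambda-ke_1+e_j'$ lies back inside the fixed box $B_N(\lambda)$. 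The linear map $M_{\lambda-ke_1}\to\bigoplus_i\mathrm{Hom}(W_{e_i'},M_{\lambda-ke_1+e_i'})$ then has nontrivial kernel, giving a single homogeneous $w$ with $W_{e_i'}w=0$ for \emph{all} $i$ simultaneously. Your proposal is missing both the use of the higher-rank Virasoro classification to control the direction of growth and this basis-change trick that makes one dimension count suffice; without them the proof does not close.
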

\begin{proof}
$M$ as a $Vir[\mu]$ module  is a weight module but not cuspidal. From \cite[Theorem 3.9]{LZ2}, we know that every nontrivial cuspidal $Vir[\mu]$ module has support set $\gamma+\Z^m$ or $\Z^m\setminus\{0\}$ for some $\gamma \in\bC^m$. So $V$ has a simple $Vir[\mu]$-subquotient $X$ that is not cuspidal. Again by \cite[Theorem 3.9]{LZ2}, $X$ is a $Vir[\mu]$ module of highest weight type, and after an appropriate change of variables $t_1,t_2,\ldots,t_m$, $X$ is isomorphic to $L_{Vir[\mu]}(G,e_1,Y)$, where $G$ is the subgroup of $\Z^m$ generated by $e_2,\dots,e_m$, $Y$ is a simple cuspidal $\Vir[\mu]_G$ module. From \cite{LZ2}, $\dim V_{-ke_1+\lambda},k\in\N$ are not uniformly bounded. Fix an integer $N>3$, and let $B_N(\lambda)=\lambda+\{\alpha\in\Z^m\big||\alpha_i|\leqslant N,i=1,\dots,m\}$. Since $B_N(\lambda)$ is a finite set, there is a $k\in\N$ such that$-ke_1+\lambda\ne 0$ and $\dim M_{-ke_1+\lambda}>2^n(m+n)\sum_{\beta\in B_N(\lambda)}\dim M_\beta$.


Set $e'_1=(k+1)e_1+e_2,e'_2=ke_1+e_2,e'_j=e'_1+e_j$ for $3\leqslant j\leqslant m$. Then $\{e'_1,\dots,e'_m\}$ is a new $\Z$-basis of $\Z^m$ and $-ke_1+\lambda+e'_j\in B_N(\lambda)$ for all $j$. Since $\dim \g_{e'_i}=2^n(m+n)$, there exists a  nonzero homogeneous element $w\in V_{-ke_1+\lambda}$ such that $g_{e'_i}\cdot w=0$ for all $i$. Then the lemma follows after replacing $\lambda$ with $-ke_1+\lambda$ and $e_i$ with $e'_i$.\end{proof}



\begin{lemma}\cite[Lemma 3.3]{MZ}\label{lem5.6}
Suppose $m>1$. Let $e_1,\dots,e_m,\lambda$ and $w$ be as in Lemma \ref{lem5.5}. Then for any $v\in M$ there is a  $N(v)\in \N$ such that $ W_\alpha\cdot v=0$ for any $\alpha\in\Z^m$ with $\alpha_i>N(v), \forall i=1,\dots,m$.
\end{lemma}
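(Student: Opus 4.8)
The plan is to prove the statement first for the single vector $v=w$ supplied by Lemma \ref{lem5.5}, with the optimal bound $N(w)=0$, and then to deduce the general case from the simplicity of $M$ by showing that the set of all $v\in M$ satisfying the conclusion is a nonzero $W$-submodule.

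For the first step I would set $S=\{\alpha\in\Z^m\mid W_\alpha\cdot w=0\}$; Lemma \ref{lem5.5} gives $e_1,\dots,e_m\in S$. The crucial point is that $S$ is closed under adding a nonzero element of itself: if $\beta\in S$ and $\gamma\in S$ with $\gamma\neq0$, then by Lemma \ref{lem5.1} every $x\in W_{\beta+\gamma}$ is a finite sum of brackets $[y_l,z_l]$ with homogeneous $y_l\in W_\beta$, $z_l\in W_\gamma$, so
\[
x\cdot w=\sum_l\Bigl(y_l\cdot(z_l\cdot w)-(-1)^{|y_l||z_l|}z_l\cdot(y_l\cdot w)\Bigr)=0,
\]
since $z_l\cdot w=0$ and $y_l\cdot w=0$. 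Starting from $e_1,\dots,e_m\in S$ and adjoining the $e_i$'s one at a time yields $\sum_{i}a_ie_i\in S$ for every choice of integers $a_i\geq1$; hence $W_\alpha\cdot w=0$ for all $\alpha$ with $\alpha_i>0$ for all $i$, which is the case $v=w$ with $N(w)=0$.

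For the second step, let $M'$ be the set of all $v\in M$ for which there is an $N\in\N$ with $W_\alpha\cdot v=0$ whenever $\alpha_i>N$ for all $i$. Then $M'$ is a subspace (take the maximum of two bounds) and $w\in M'$, so it suffices to check that $M'$ is $W$-stable; since $W$ is spanned by weight vectors it is enough to show $u\cdot v\in M'$ for a weight vector $u\in W_\delta$ and $v\in M'$ with bound $N$. Writing $\|\delta\|=\sum_{j=1}^m|\delta_j|$, for any $\alpha$ with $\alpha_i>N+\|\delta\|$ for all $i$ and any $y\in W_\alpha$ the identity
\[
y\cdot(u\cdot v)=[y,u]\cdot v+(-1)^{|y||u|}u\cdot(y\cdot v)
\]
has both summands equal to zero: $y\cdot v=0$ because $\alpha_i>N$, and $[y,u]\in W_{\alpha+\delta}$ with $(\alpha+\delta)_i>N$, so $[y,u]\cdot v=0$. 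Thus $u\cdot v\in M'$ (with bound $N+\|\delta\|$), so $M'$ is a nonzero submodule of the simple module $M$, whence $M'=M$ and the lemma follows.

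I do not expect a genuine obstacle: this is the superalgebra version of the standard ``cone‑annihilation'' argument used for $W_{m,0}$ in \cite{MZ,BF1}, and the only things to monitor are how the bound $N$ grows when passing from $v$ to $u\cdot v$ and the Koszul signs — the latter being harmless, since every term in sight is being shown to vanish. The one input that must genuinely be in place is Lemma \ref{lem5.1}, i.e.\ that $[W_\alpha,W_\beta]=W_{\alpha+\beta}$ for $\beta\neq0$, which is exactly what drives both the closure property of $S$ and the weight shift in the submodule argument.
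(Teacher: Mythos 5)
Your proposal is correct and follows essentially the same route as the paper: define the set $M'$ of vectors annihilated by all $W_\alpha$ with sufficiently positive $\alpha$, check it is a nonzero subspace containing $w$, and use the bracket identity together with Lemma \ref{lem5.1} to show it is a submodule of the simple module $M$. The only difference is that you spell out, via the additive closure of $\{\alpha : W_\alpha w=0\}$, why $w\in M'$ with $N(w)=0$ — a step the paper dismisses as clear — and this is done correctly.
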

\begin{proof} Let $M'$ be the subset of $M$ consisting of $v\in M$ for which there is a  $N(v)\in \N$ such that $ W_\alpha\cdot v=0$ for any $\alpha\in\Z^m$ with $\alpha_i>N(v), \forall i=1,\dots,m$. We need to show that $M=M'$. Clearly $M'$ is a subspace of $M$ with $w\in M'$.

Now for any  $v\in M'$ and $\beta\in \Z^m$, let $N=max\{|\beta_1|,\dots,|\beta_m|\}$. Then any $\alpha\in\Z^m$ with $\alpha_i>N(v)+N,\ i=1,\dots,m$, we have
$$ W_\alpha W_\beta v\subseteq W_\beta\g_\alpha v+[ W_\alpha, W_\beta]v\subseteq W_\beta W_\alpha v+ W_{\alpha+\beta}v=0.$$
So $ W_\beta w\in M'$.  $M'$ therefore is a nonzero submodule of the simple module $M$. Thus $M=M'$.
\end{proof}

\begin{lemma}\cite[Lemma 3.4]{MZ}\label{lem5.7}
Suppose $m>1$. Let $e_1,\dots,e_m,\lambda$ and $w$ be as in Lemma \ref{lem5.5}. Then $ W_{-\alpha}v\neq 0$ for any nonzero $v\in M$ and any $\alpha\in\N^m$.
\end{lemma}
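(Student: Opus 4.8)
The plan is to argue by contradiction: suppose $W_{-\alpha}v = 0$ for some nonzero homogeneous $v \in M$ and some $\alpha \in \N^m$. Combining this hypothesis with Lemma \ref{lem5.6} (which gives, for each vector, vanishing of $W_\beta$ for all $\beta$ in a translated positive cone), I want to show that the span of $U(W)v$ is ``too small'' — concretely, that it forces $M$ to have uniformly bounded weight multiplicities, contradicting the standing assumption that $M$ is not cuspidal. The key mechanism is that, via Lemma \ref{lem5.1} (i.e.\ $[W_\gamma, W_\delta] = W_{\gamma+\delta}$ whenever $\delta \neq 0$), a single relation $W_{-\alpha}v = 0$ propagates: bracketing against generators in $W_\mu$ for suitable $\mu$ shows $W_{\mu-\alpha}v$ lies in $W_\mu W_{-\alpha}v + [W_\mu,W_{-\alpha}]v$, and iterating one can reach $W_{-\alpha'}v = 0$ for a whole family of $\alpha'$, or more precisely control $W_\gamma v$ for $\gamma$ ranging over a large region.

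First I would fix such a $v$ with $W_{-\alpha}v = 0$ and let $N = N(v)$ be as in Lemma \ref{lem5.6}, so that $W_\gamma v = 0$ for all $\gamma$ with $\gamma_i > N$ for every $i$. The idea is then to run an induction, analogous to the argument proving Lemma \ref{lem3.10}, to show that $U(W)v$ is spanned by $W_{\gamma_1}\cdots W_{\gamma_k}v$ with all but boundedly many of the relevant ``steps'' constrained, forcing $\mathrm{Supp}(U(W)v)$ to lie in a bounded translate of the region cut out by the two conditions $\{\gamma_i > N \text{ fails}\}$ and $\{\gamma = -\alpha \text{ is annihilated}\}$. Since $m > 1$, the hyperplane-type constraints coming from a single cone condition (Lemma \ref{lem5.6}) together with a single point annihilation do not by themselves bound the support; the extra leverage is Lemma \ref{lem5.1}, which lets one ``slide'' the annihilating index around: from $W_{-\alpha}v=0$ and $[W_{\delta}, W_{-\alpha}] = W_{\delta-\alpha}$ for $\delta \neq 0$ one deduces $W_{\delta-\alpha}v \subseteq W_\delta W_{-\alpha} v = W_\delta W_{-\alpha}v$, and when $\delta$ has a large positive coordinate this feeds back into the cone-vanishing of Lemma \ref{lem5.6} applied to $W_{-\alpha}v=0$ — but $W_{-\alpha}v$ is already zero, so in fact $W_{\delta-\alpha}v=0$ for all $\delta$ in a half-space. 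Intersecting enough such half-spaces (using $m\ge 2$ and varying which coordinate of $\delta$ is pushed positive) kills $W_\gamma v$ for $\gamma$ outside a bounded set.

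Once $W_\gamma v = 0$ for all $\gamma$ outside a finite set $F \subset \Z^m$, the submodule $U(W)v$ is spanned by vectors $W_{\gamma_1}\cdots W_{\gamma_\ell}v$ with each $\gamma_j \in F$ — because pushing any generator past another, via $[W_{\gamma},W_{\delta}]=W_{\gamma+\delta}$, either lands the index back in $F$ or hits a zero — so $U(W)v$ has support in $F + \N\cdot 0$... more carefully, its support is contained in $\lambda + (\text{semigroup generated by }F)$, but with weight multiplicities bounded by $\sum_{\text{finite}} \dim W_{\gamma}$ along each weight, hence uniformly bounded. By simplicity $U(W)v = M$, so $M$ is cuspidal, contradiction; therefore $W_{-\alpha}v \neq 0$. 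The main obstacle I anticipate is making the ``sliding'' step fully rigorous: one must check that bracketing $W_{-\alpha}$ against generators indexed by $\delta$ with one large positive coordinate really does yield $W_{\delta-\alpha}v = 0$ for all such $\delta$ — in particular handling the case where $\delta - \alpha$ still has all coordinates bounded — and then assembling finitely many half-space conditions (one for each coordinate direction, which is where $m>1$ is essential) into the statement that only finitely many $\gamma$ survive. This combinatorial bookkeeping, mirroring the induction in Lemma \ref{lem3.10} and \cite{MZ}, is routine but is the technical heart of the argument.
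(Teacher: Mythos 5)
Your overall strategy --- argue by contradiction and use Lemma \ref{lem5.6} together with Lemma \ref{lem5.1} to propagate the single relation $W_{-\alpha}v=0$ --- is exactly the paper's, but two of the steps you describe would fail as written. First, the ``sliding'' step: from $[W_\delta,W_{-\alpha}]v\subseteq W_\delta W_{-\alpha}v+W_{-\alpha}W_\delta v$ you may conclude $W_{\delta-\alpha}v=0$ only when \emph{both} terms vanish, i.e.\ only when $W_\delta v=0$ as well; Lemma \ref{lem5.6} supplies this only when \emph{every} coordinate of $\delta$ exceeds $N(v)$, which is a cone, not the half-space ``one coordinate large'' that you invoke. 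So the claim ``$W_{\delta-\alpha}v=0$ for all $\delta$ in a half-space'' is not justified, and intersecting such half-spaces is not available. Second, your closing step is genuinely wrong: knowing that $U(W)v$ is spanned by words $W_{\gamma_1}\cdots W_{\gamma_\ell}v$ with all $\gamma_j$ in a finite set $F$ does \emph{not} bound the weight multiplicities, because the number of words in a finite alphabet $F\subseteq\Z^m$ with a prescribed total degree is in general infinite (e.g.\ as soon as $F$ contains both $\gamma$ and $-\gamma$). So ``finite $F$ $\Rightarrow$ cuspidal'' is a non sequitur.

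Both gaps disappear if you push the correct version of the sliding argument to its natural conclusion. The annihilator $\{x\in W: xv=0\}$ is closed under brackets (if $xv=yv=0$ then $[x,y]v=0$), so starting from the cone $C=\{\gamma:\gamma_i>N(v)\ \forall i\}$ on which $W_\gamma v=0$ and from $W_{-\alpha}v=0$, Lemma \ref{lem5.1} gives $W_{\gamma-\alpha}v=0$ for all $\gamma\in C$, and iterating gives $W_\gamma v=0$ for all $\gamma\in C-k\alpha$, $k\geq 0$. Since $\alpha\in\N^m$ has every coordinate positive, $\bigcup_{k\geq 0}(C-k\alpha)=\Z^m$, so $Wv=0$ outright: there is no finite exceptional set $F$ to worry about, $\bC v$ is a submodule, $M=\bC v$ is trivial, contradicting non-cuspidality. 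This is in substance the paper's proof, which phrases it as: the annihilator of $v$ contains $W_{e_i+N\alpha}$ ($i=1,\dots,m$) and $W_{-\alpha}$, and by Lemma \ref{lem5.1} these graded pieces generate $W$ as a Lie superalgebra because $\{e_i+N\alpha\}\cup\{-\alpha\}$ generates $\Z^m$.
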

\begin{proof}
Suppose $ W_{-\alpha} v=0$ for some nonzero homogeneous $v\in M$ and $\alpha\in\N^m$. By Lemma \ref{lem5.6}, there is a  $N\in \N$ such that $ W_{e_i+N\alpha}\cdot v=0,\ i=1,\dots,m$. Lemma \ref{lem5.1} implies that $ W$ is generated by $ W_{e_i+N\alpha},\ i=1,\dots,m$ and $ W_{-\alpha}$ as Lie superalgebra. Then $ W\cdot v=0$. This means that $M=\bC v$, which contradicts with the assumption that $M$ is not cuspidal. So the lemma holds.
\end{proof}

\begin{lemma}\cite[Lemma 3.5]{MZ}\label{lem5.8}
Suppose $m>1$. Let $e_1,\dots,e_m,\lambda$ and $w$ be as in Lemma \ref{lem5.5}. Then for any $\mu\in supp(M)$ and any $\alpha\in\N^n$ we have $\{k\in\Z|\mu+k\alpha\in supp(M)\}=\Z_{\leqslant m}$ for some $m\in\Z_+$.
\end{lemma}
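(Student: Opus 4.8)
Write $S=\{k\in\Z\mid\mu+k\alpha\in{\rm supp}(M)\}$. Since $\mu\in{\rm supp}(M)$ we have $0\in S$, so the statement amounts to showing that $S$ is downward closed and bounded above; the plan is to treat these two properties separately, the first being essentially immediate and the second being the real point.

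Downward closure follows at once from Lemma~\ref{lem5.7}: if $k\in S$, pick a nonzero $v\in M_{\mu+k\alpha}$; since $\alpha\in\N^m$, Lemma~\ref{lem5.7} gives $W_{-\alpha}v\neq0$, and $W_{-\alpha}v\subseteq M_{\mu+(k-1)\alpha}$, so $k-1\in S$. Hence $S$ is either all of $\Z$ or of the form $\Z_{\leqslant m_0}$, and it remains to prove that $S$ is bounded above.

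For the upper bound I would reduce to a rank-one Witt algebra pointing in the direction $\alpha$ and invoke Lemma~\ref{lem5.2}. Put $d_\alpha=\sum_{i=1}^m\alpha_id_i$ and $\mathfrak{d}_\alpha={\rm span}_\bC\{t^{k\alpha}d_\alpha\mid k\in\Z\}\subseteq W$. A short bracket computation gives $[t^{k\alpha}d_\alpha,t^{l\alpha}d_\alpha]=(l-k)(\alpha,\alpha)\,t^{(k+l)\alpha}d_\alpha$, so $\mathfrak{d}_\alpha$ is a Lie subalgebra of $W$; since $\alpha\in\N^m$ we have $(\alpha,\alpha)>0$, and $t^{k\alpha}d_\alpha\mapsto(\alpha,\alpha)\,t_1^kd_1$ is an isomorphism of Lie algebras from $\mathfrak{d}_\alpha$ onto $W_{1,0}$. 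The observation that makes this reduction work is that, although $M$ need not have finite-dimensional weight spaces as a $\mathfrak{d}_\alpha$-module, the subspace $M_{\mu+\Z\alpha}=\bigoplus_{k\in\Z}M_{\mu+k\alpha}$ is a $\mathfrak{d}_\alpha$-submodule of $M$ (because $t^{k\alpha}d_\alpha$ has $D_m$-weight $k\alpha$), it is a weight $\mathfrak{d}_\alpha$-module, and its $d_\alpha$-weight space with eigenvalue $(\mu,\alpha)+k(\alpha,\alpha)$ is exactly $M_{\mu+k\alpha}$, hence finite-dimensional.

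Finally I would verify the hypothesis of Lemma~\ref{lem5.2} for $M_{\mu+\Z\alpha}$, viewed as a weight $W_{1,0}$-module through the isomorphism above. For any $v\in M_{\mu+\Z\alpha}$ and each $i$ we have $(k\alpha)_i=k\alpha_i>N(v)$ once $k$ is large (this is where $\alpha\in\N^m$ enters again), so Lemma~\ref{lem5.6} yields $t^{k\alpha}d_\alpha\cdot v=0$ for all sufficiently large $k$; transported along $\mathfrak{d}_\alpha\cong W_{1,0}$, this is exactly the local-nilpotency hypothesis of Lemma~\ref{lem5.2}. Lemma~\ref{lem5.2} then forces the $d_\alpha$-support of $M_{\mu+\Z\alpha}$ to be bounded above, i.e.\ $M_{\mu+k\alpha}=0$ for $k\gg0$, so $S$ is bounded above and therefore $S=\Z_{\leqslant m_0}$ with $m_0\in\Z_+$. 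I expect the only genuine subtlety to be this passage to the submodule $M_{\mu+\Z\alpha}$ rather than to the full restriction $M|_{\mathfrak{d}_\alpha}$ (whose weight spaces are typically infinite-dimensional); once that is in place everything rests on the rank-one facts already established, Lemmas~\ref{lem5.2},~\ref{lem5.6} and~\ref{lem5.7}.
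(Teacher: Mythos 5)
Your proof is correct and follows essentially the same route as the paper: downward closure of $S$ via Lemma~\ref{lem5.7}, then an upper bound by restricting $M_{\mu+\Z\alpha}$ to a rank-one Witt subalgebra spanned by $\{t^{k\alpha}d\}$ and invoking Lemmas~\ref{lem5.6} and~\ref{lem5.2}. The only (immaterial) difference is that the paper takes $d=d_\mu$ for the generic $\mu$ inside $\Vir[\mu]$ whereas you take $d=d_\alpha$, and you spell out the downward-closure step that the paper leaves implicit.
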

\begin{proof} Let $S:=\{k\in\Z|\mu+k\alpha\in supp(V)\}$.
By Lemma \ref{lem5.7}, $S=\Z_{\leqslant m}$ for some $m\in\Z_+$ or $S=\Z$.  Let $B={\rm span}\{t^{k\alpha}d_{\mu} |k\in\Z\}$ be the subalgebra of $\Vir[\mu]$. Then $B\cong W_{1.0}$ and $V=M_{\mu+\Z\alpha}$ is a weight $B$ module with finite-dimensional weight spaces.
By Lemma \ref{lem5.6}, for any $v\in V$, there is a $k_0\in \N$ such that $t^{k\alpha}d\cdot v=0$ for all $k\geqslant k_0$. From Lemma \ref{lem5.2}, the support set of $B$ module $M$ is upper bounded. Hence $S\ne \Z$, and the lemma follows.\end{proof}

\begin{lemma}\cite[Lemma 3.6]{MZ}\label{lem5.9}
Suppose $m>1$. Let $e_1,\dots,e_m,\lambda$ and $w$ be as in Lemma \ref{lem5.5} . Then, after an appropriate change of variables $t_1,t_2,\ldots,t_m$, we have:
\begin{itemize}
\item[(1).] $ W_{e_1}\cdot w=0,i=1,\dots,m$.

\item[(2).]$\lambda+\alpha\notin {\rm Supp}(M)$ for any nonzero $\alpha\in\Z_+^m$.

\item[(3).]$\lambda-\alpha\in {\rm Supp}(M)$ for any $\alpha\in\Z_+^m$.

\item[(4).]For any $\alpha,\beta\in\Z^m$ such that $\alpha_i\leqslant\beta_i,\ i=1,\dots,m$, we have $\lambda+\alpha\notin {\rm supp}(M)$ implies that $\lambda+\beta\notin {\rm supp}(M)$.
\end{itemize}
\end{lemma}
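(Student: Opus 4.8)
## Proof proposal for Lemma \ref{lem5.9}

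The plan is to follow \cite[Lemma 3.6]{MZ} and combine the structural information already accumulated in Lemmas \ref{lem5.5}--\ref{lem5.8}. The starting point is the vector $w\in M_\lambda$ with $\lambda\neq 0$ and $W_{e_i}\cdot w=0$ for all $i$, produced by Lemma \ref{lem5.5}. I would first observe that part (1) is essentially a restatement of the conclusion of Lemma \ref{lem5.5} (the indexing ``$W_{e_1}\cdot w=0,\ i=1,\dots,m$'' should read $W_{e_i}\cdot w=0$), so no work is needed there beyond carrying $w$ along; the real content is (2), (3), (4), together with the claim that a single further change of variables can be arranged to make all four hold simultaneously.

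For (4): this is a purely combinatorial consequence of Lemma \ref{lem5.8}. Given $\alpha\le\beta$ coordinatewise, write $\beta=\alpha+\sum_i c_i e_i$ with $c_i\in\Z_+$ and move from $\lambda+\alpha$ to $\lambda+\beta$ one coordinate direction at a time. If $\lambda+\beta\in\supp(M)$, then applying Lemma \ref{lem5.8} with the base weight $\mu=\lambda+\beta$ and the direction $\alpha'=e_i$ for a suitable $i$ shows that the full ray $\lambda+\beta-\Z_+ e_i$ lies in $\supp(M)$ (the support set in a single coordinate direction is an unbounded-below interval $\Z_{\le m}$), and iterating over the coordinates walks us back to $\lambda+\alpha\in\supp(M)$, contradiction. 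So (4) follows once (2) and (3) are in place, or can be proved in parallel. For (3): by Lemma \ref{lem5.7}, $W_{-\gamma}v\neq0$ for every nonzero $v$ and every $\gamma\in\N^m$; starting from $w\in M_\lambda$ and choosing $\gamma\in\N^m$ so that $\lambda-\alpha = \lambda-\gamma + (\text{something in }\N^m)$ is reachable, one shows $M_{\lambda-\alpha}\neq0$ for all $\alpha\in\Z_+^m$ — again using Lemma \ref{lem5.8} to fill in the coordinate directions where $\alpha_i=0$ versus where $\alpha_i>0$, plus $W_{e_i}\cdot w=0$ to control the ``bad'' directions. For (2): the vector $w$ is annihilated by all $W_{e_i}$, hence by Lemma \ref{lem5.1} by all $W_\alpha$ with $\alpha\in\Z_+^m\setminus\{0\}$ (since such $W_\alpha=[W_{e_i},W_{\alpha-e_i}]$ whenever $\alpha\neq e_i$, and directly when $\alpha=e_i$); one then argues that the subspace of $v\in M$ with $W_\alpha v=0$ for all sufficiently positive $\alpha$ — combined with the highest-weight-type behaviour — forces $\lambda+\alpha\notin\supp(M)$ for nonzero $\alpha\in\Z_+^m$, because otherwise $W_{-\alpha}$ acting from such a weight would eventually contradict Lemma \ref{lem5.6} or the positivity annihilation of $w$.

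The change of variables is the delicate bookkeeping point, and I expect it to be the main obstacle: Lemma \ref{lem5.5} already consumed one change of basis to get $W_{e_i}w=0$, and now I need (2)--(3) to hold for the \emph{same} choice of coordinates, which amounts to choosing the $\Z$-basis $e_1,\dots,e_m$ of $\Z^m$ so that the cone $\Z_+^m$ points ``away from'' $\supp(M)$ at $\lambda$ in every coordinate direction at once. Following \cite{MZ}, I would argue that since $\supp(M)\subseteq\lambda+\Z^m$ and $M$ is not cuspidal, Lemma \ref{lem5.8} shows $\supp(M)$ is, in each coordinate direction, bounded above; a compactness/finiteness argument (there are only finitely many weights in any box $B_N(\lambda)$, and the non-cuspidality forces the support to ``grow'' only in the negative cone) then lets one pick $e_1,\dots,e_m$ so that $w$ remains annihilated by the new $W_{e_i}$ while $\lambda$ becomes a maximal point of $\supp(M)$ with respect to the partial order induced by $\Z_+^m$. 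Once $\lambda$ is such a maximal point, (2) is immediate, (3) follows from Lemma \ref{lem5.7} as above, and (4) follows from Lemma \ref{lem5.8}. I would lean on the cited \cite[Lemma 3.6]{MZ} for the precise form of this basis selection, remarking only that the superalgebra structure changes nothing since Lemmas \ref{lem5.1} and \ref{lem5.5}--\ref{lem5.8} have already been established for $W=W_{m,n}$.
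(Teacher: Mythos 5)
Your overall strategy (use Lemmas \ref{lem5.7} and \ref{lem5.8} after a change of basis that makes the positive cone point away from the support) is the same as the paper's, but there is a concrete gap, and the step you defer to \cite{MZ} is precisely the step that closes it. Lemma \ref{lem5.8} is stated (and proved, via Lemma \ref{lem5.7}) only for directions $\alpha\in\N^m$, i.e.\ with \emph{all} coordinates strictly positive. So you cannot "move one coordinate direction at a time" by applying Lemma \ref{lem5.8} with direction $e_i$: for that direction the support along the line need not be a downward ray, and nothing proved so far rules out gaps. The same defect infects your argument for (3) when $\alpha\in\Z_+^m$ has a zero coordinate, and your argument for (4) whenever $\beta-\alpha$ has a zero coordinate; a two-step detour such as $\lambda+\beta\to\lambda+\beta-(\gamma+\mathbf 1)\to\lambda+\alpha$ does not repair it, because Lemma \ref{lem5.8} only lets you move \emph{down} a ray, never back up. Your sketch of (2) is also not yet a proof: "eventually contradict Lemma \ref{lem5.6}" does not identify which weight is excluded from the support.

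The paper's proof fixes both issues simultaneously by an explicit choice of basis. By Lemma \ref{lem5.8} applied to the direction $(1,\dots,1)\in\N^m$ there is $p\geq 2$ with $\{k:\lambda+k(1,\dots,1)\in\supp(M)\}=\Z_{\leq p-2}$; one then sets $e_1'=(p+1)e_1+p(e_2+\cdots+e_m)$, $e_2'=e_1+e_2+e_1'$, $e_i'=e_1'+e_i$ for $i\geq 3$. Every nonzero $\alpha$ in the \emph{new} cone $\Z_+e_1'+\cdots+\Z_+e_m'$ satisfies $\alpha\geq p(1,\dots,1)$ in the old coordinates. Consequently: (1) survives because each $e_i'$ lies in the old $\Z_+^m\setminus\{0\}$ and $W_{\gamma}w=0$ for all such $\gamma$ by Lemma \ref{lem5.1} (a point you did make); every difference $\beta-\alpha$ occurring in (3) and (4) lies in the old $\N^m$, so Lemmas \ref{lem5.7} and \ref{lem5.8} apply directly with no coordinate-by-coordinate walk; and for (2), $\alpha-(p-1)(1,\dots,1)\in\N^m$, so $\lambda+\alpha\in\supp(M)$ would force $\lambda+(p-1)(1,\dots,1)\in\supp(M)$ by Lemma \ref{lem5.7}, contradicting the choice of $p$. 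Without exhibiting this basis (or an equivalent one) your proposal does not establish (2)--(4).
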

\begin{proof}
By Lemma \ref{lem5.8}, there is an integer $p\geqslant 2$ such that $\{k\in\Z|\lambda+k(1,\dots,1)\in {\rm supp}(M)\}=\Z_{\leqslant p-2}$. Let $e'_1=(p+1)e_1+pe_2+\dots+pe_m,e'_2=e_1+e_2+e'_1,e'_i=e'_1+e_i,i=3,\dots,m$.
Then $e'_1,\dots,e'_m$ is another $\Z$-basis of $\Z^m$. Replace $e_i$ with $e'_i$ for all $i\in\{1,\dots,m\}$. Then (1) is clear, and (3) follows from Lemma \ref{lem5.8}.  (2) and (3) are proved in a similar way to the proof of Lemma \ref{lem5.7} by noting that $\lambda+(p-1)(1,1,\ldots,1)\not\in {\rm supp}(M)$. \end{proof}

\begin{lemma}\cite[Lemma 3.7]{MZ}\label{lem5.10}
Suppose $m>1$. Let $e_1,\dots,e_m,\lambda$ and $w$ be as in the Lemma \ref{lem5.9}. Then $M\cong L(G,\beta,X)$ for some subgroup $G$ of $\Z^m$ and $0\ne \beta\in \Z^m$ with $\Z^m=G\oplus \Z\beta$ and a simple cuspidal $ W_G$ module $X$.
\end{lemma}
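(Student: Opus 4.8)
The plan is to realize $M$ as the unique simple quotient of an induced module $M(G,\beta,X)$, where $G=\Z e_2+\cdots+\Z e_m$ and $\beta=e_1$ (these being the "good" coordinates supplied by Lemma \ref{lem5.9}), and $X$ is a suitable simple weight $W_G$ module built from the top of $M$. First I would set $V=M_{\lambda+G}$ and observe that, by Lemma \ref{lem5.9}(1), $w\in M_\lambda$ is annihilated by $W_{e_1}$; combined with Lemma \ref{lem5.9}(2), which says $\lambda+\alpha\notin\supp(M)$ for all nonzero $\alpha\in\Z_+^m$, this forces $W_{ke_1}\cdot V=0$ for all $k\ge 1$ along the line through $w$, and more generally $W_{\alpha}\cdot V\subseteq M_{\lambda+G}$ only when $\alpha\in G$, i.e. $V$ is stable under $W_G$ and killed by $W_{G+\N e_1}$. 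So $V$ is a weight $W_G$ module with finite-dimensional weight spaces, and setting $W_{G+\N\beta}\cdot V=0$ makes $V$ a $W_G\oplus W_{G+\N\beta}$ module; the universal property of induction then gives a $W$-module homomorphism $M(G,e_1,V)\to M$, which is surjective because $M$ is simple and its image contains $w$. Hence $M$ is a quotient of $M(G,e_1,V)$, and being simple it is the unique simple quotient $L(G,e_1,V')$ for an appropriate simple subquotient $V'$ of $V$.

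Next I would pin down the structure of $V$ as a $W_G$ module. Using Lemma \ref{lem5.9}(3) and (4), the support of $M$ "below" $\lambda$ is downward closed and the weight multiplicities in each $G$-coset are controlled; the key point is that $V$ itself must be cuspidal as a $W_G\cong W_{m-1,n}'$ module. To see this I would argue as in \cite{MZ}: were $V$ not cuspidal, it would have a simple subquotient of highest-weight type with respect to some direction inside $G$, and pushing this back up through the induction (together with Lemma \ref{lem5.8}, which says every line in $\supp(M)$ is a half-line) would produce weight spaces of $M$ growing without bound in a way incompatible with finite-dimensionality — more precisely, one combines the half-line property along two independent directions to bound $\dim M_{\lambda+\gamma}$ by a sum over a finite box, forcing uniform boundedness of $V$. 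Once $V$ is known to be cuspidal, replace it by a simple cuspidal subquotient $X$ (a simple cuspidal $W_G$ module exists inside $V$ since cuspidal modules have finite length), and conclude $M\cong L(G,e_1,X)$.

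The step I expect to be the main obstacle is proving that $V=M_{\lambda+G}$ is cuspidal as a $W_G$ module — this is the heart of \cite[Lemma 3.7]{MZ} and the reason all the preceding lemmas (\ref{lem5.5}–\ref{lem5.9}) were assembled. The delicate part is converting the one-dimensional "upper-bounded support" statements (Lemmas \ref{lem5.2}, \ref{lem5.8}) into a genuine two-dimensional uniform bound: one fixes a weight $\mu\in\lambda+G$, uses Lemma \ref{lem5.6} to get, for each $v\in M_\mu$, a finiteness $N(v)$ killing $W_\alpha v$ for $\alpha\gg 0$, then plays this against the half-line structure along several independent $\alpha\in\N^m$ to show that every vector of $M_\mu$ lies in the image of finitely many weight spaces from a fixed finite box, whence $\dim M_\mu$ is bounded independently of $\mu$. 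I would also need to check the compatibility statement that distinct cosets $\lambda+G$ and $\mu+G$ inside $\supp(M)$ have the "same" multiplicity pattern up to the fixed bound, which again follows by transporting via the $W_{e_i}$-action and Lemma \ref{lem5.9}(4). Everything else — the induction/universal-property bookkeeping and the passage to a simple cuspidal subquotient — is routine given Theorem \ref{the4.4} and the definition of $L(G,\beta,X)$.
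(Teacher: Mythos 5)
There is a genuine gap at the very first step: you take $G=\Z e_2+\cdots+\Z e_m$, $\beta=e_1$, set $V=M_{\lambda+G}$, and claim $V$ is killed by $W_{G+\N e_1}$. Lemma \ref{lem5.9}(2) only says $M_{\lambda+\alpha}=0$ for nonzero $\alpha\in\Z_+^m$; it says nothing about weights $\lambda+ke_1+g$ with $k\geq 1$ and $g\in G$ having some negative entries, and such weights are generically in the support. Indeed, the paper's proof shows that $M$ has a non-cuspidal simple $\Vir[\mu]$-subquotient whose highest-weight decomposition determines a subgroup $G_0=\{\gamma\in\Z^m\mid(\gamma,\alpha)=0\}$ for some $\alpha\in\N^m$ with \emph{all} coordinates nonzero, and that $(\lambda-\Z_+\beta_0+G_0)\setminus\{0\}\subseteq{\rm supp}(M)$. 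Taking $\gamma=\alpha_2e_1-\alpha_1e_2\in G_0$ gives a weight $\lambda+\gamma\in{\rm supp}(M)$ with $\gamma_1>0$, so $W_{G+\N e_1}$ does not annihilate $M_{\lambda+G}$ and your induced map $M(G,e_1,V)\to M$ is not available. The correct splitting direction $\beta$ and subgroup $G$ are not the coordinate ones, and they cannot be read off from Lemma \ref{lem5.9} alone.

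The second, related omission is that even with the correct $G_0$ and $\beta_0$ one must locate an actual ``top coset'': the paper's argument splits into Case 1 (some coset $\lambda+k\beta_0+G_0$ misses the support entirely, whence the coset just below it is a $W_{G_0}$-module killed by the positive part, and Lemma \ref{lem5.4} forces it to be cuspidal) and Case 2 (every such coset meets the support), and the bulk of the work is showing Case 2 is impossible — this uses Lemma \ref{lem5.5}'s argument inside a single $G_0$-coset to produce a vector $v$ killed by $W_{\beta_1},\dots,W_{\beta_m}$ for a new $\Z$-basis, then Lemma \ref{lem5.6} to push $\mu+r(\beta_1+\cdots+\beta_m)$ out of the support, contradicting the inclusion $(\lambda-\Z_+\beta_0+G_0)\setminus\{0\}\subseteq{\rm supp}(M)$. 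Your proposal correctly identifies that cuspidality of the top layer is the heart of the matter, but the mechanism you sketch (a two-dimensional uniform bound assembled from half-line statements) is not how it is done, and more importantly the existence of a top layer at all is what actually has to be proved.
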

\begin{proof}
$M$ is a weight $Vir[\mu]$ module with finite-dimensional weight spaces, then $M$ has a simple $Vir[\mu]$-subquotient $V$ with $V_\lambda\neq 0$. Since ${\rm supp} (V)\cap(\lambda+\Z_+^m)=\lambda$, $V$ is isomorphic to $L_{Vir[\mu]}(G,\beta,X)$ for some subgroup $G$ of $\Z^m$, nonzero $\beta\in\Z^m$ with $\Z^m=G\oplus\Z\beta$ and some simple cuspidal $Vir_G[\mu]$ module $X$. It follows that \begin{equation}\label{5.1}(\lambda-\Z_+\beta+G)\setminus\{0\}\subseteq {\rm supp}(M).\end{equation}

There exists $\alpha\in\N^m$ such that $G=\{\gamma\in\Z^m|(\gamma,\alpha)=0\}$. In fact, such $\alpha$ exists in $\Z^m$. If $\alpha_i=0$ for some $i\in\{1,\dots,m\}$, then $e_i\in G$. This contradicts with the fact that $(\lambda+G)\setminus\{0\}\subseteq {\rm supp}(M)$ and $\lambda+\gamma\notin {\rm supp}(M)$ for any nonzero $\gamma\in\Z_+^m$. So $\alpha_i\neq 0$ for all $i$. If  $\alpha_i\alpha_j<0$ for some $i,j\in\{1,\dots,m\}$, then $\alpha_je_i-\alpha_ie_j\in G$ will also lead to a contradiction. Hence we may assume that $\alpha\in\N^m$.

Case 1. $\{\lambda+k\beta+G\}\cap {\rm supp}(M)=\varnothing$ for some $k\in\N$.

Choose $k$ as minimal as possible. Let $X'=M_{\lambda+(k-1)\beta+G}$. Then $V\cong L(G,\beta,X')$ with $X'$, from Lemma \ref{lem5.4}, being a simple cuspidal $ W_G$ module.

Case 2. $\{\lambda+k\beta+G\}\cap {\rm supp}(M)\neq\varnothing$ for all $k\in\N$.

Let $k_0\in\Z$ such that $\alpha\in k_0\beta+G$. From Lemma \ref{lem5.9}, we have $\lambda+k\alpha\notin {\rm supp}(M)$ for any $k\in\N$.  This together with (\ref{5.1}) gives  $k_0\in \N$.

Since $\alpha\in (k_0\beta+G)\cap\N^m$ with $k_0\in\N$, we can choose sufficiently large $k$ such that

(I)$|\{\lambda+k\beta+G\}\cap\{\lambda+\Z_+^m\}|>1$

(II)$|\{\lambda+(k-1)\beta+G\}\cap\{\lambda+\Z_+^m\}|>1$.

Then $M_{\lambda+k\beta+G}$ is a weight $\Vir_G[\mu]$ module with finite-dimensional weight spaces.  From the assumption in (I), and the support set of a cuspidal module, we have $M_{\lambda+k\beta+G}$ is not cuspidal. From a same arguments as in the proof of  Lemma \ref{lem5.5}, that there is a $\Z$-basis $\beta_2,\dots,\beta_m$ of $G,\mu \in\lambda+k\beta+G$ and a nonzero homogeneous vector $v\in M_\mu$ such that $ W_{\beta_i}v=0,i=2,\dots,m$.

Let $\nu\in\{\lambda+(k-1)\beta+G\}\cap\{\lambda+\Z_+^m\}$ with $\nu\neq\lambda$, which exists by (II). Then $\nu\notin {\rm supp}(M)$ by Lemma \ref{lem5.9} (2). Let $\beta_1=\nu-\mu$, then $ W_{\beta_1}v=0$. Clearly, $\beta_1,\dots,\beta_m$ is a $\Z$-basis of $\Z^m$. By Lemma \ref{lem5.6}, $\mu+r(\beta_1+\dots+\beta_m)\notin {\rm supp}(M)$ for sufficiently large $r$. On the other hand, for $r\in \N$, we have
$$(\alpha, r(\beta_1+\cdots+\beta_m))=r(\alpha,\beta_1)=-r(\alpha,\beta)<0.$$

Hence, $\mu+r(\beta_1+\dots+\beta_m)\in(\lambda-\Z_+\beta+G)\setminus\{0\}$ for sufficiently large $r$. This contradicts with (\ref{5.1}). Hence Case 2 cannot happen. The claim of the lemma follows.
\end{proof}

\begin{theorem}\label{main} Let $(m,n)\in \N\times \Z_+$. Any simple weight $W=W_{m,n}$ module with finite-dimensional weight spaces is isomorphic to one of the following modules:

{\rm (1).} a  simple quotient of a tensor module $\Gamma(\lambda, V)$, where $\lambda\in \bC^m$ and $V$ is a finite-dimensional simple $\gl(m,n)$ module;

{\rm (2).}  a  module $L(G,e_m, X)^B$ of highest weight type, where $G=\Z e_1+\ldots+\Z e_{m-1}$, $B\in {\rm GL}_m(\Z)$, $X$ is  a simple quotient of the $W_{G}$ module $\Gamma(\lambda, V)$ for some  $\lambda\in \bC^m$ and  a finite-dimensional simple $\gl(m-1,n)$ module $V$.
\end{theorem}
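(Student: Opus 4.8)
The plan is to split according to whether the simple weight module $M$ is cuspidal or not. If $M$ is cuspidal, then by Theorem \ref{the3.11} it is isomorphic to a simple quotient of a tensor module $\Gamma(\lambda,V)$ for a finite-dimensional simple $\gl(m,n)$-module $V$ and some $\lambda\in\bC^m$, which is exactly case (1). So the real content is the case where $M$ has finite-dimensional weight spaces but is \emph{not} cuspidal, and here I would run the argument assembled in Lemmas \ref{lem5.1}--\ref{lem5.10}.

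The first step, when $m=1$, is Lemma \ref{lem5.3}: a non-cuspidal $M$ is then a highest (or lowest) weight module, which after applying a sign automorphism $B=\pm 1\in{\rm GL}_1(\Z)$ falls into case (2) with $G=0$ (so $W_G=A_{0,n}d_1$-type, and $X$ is forced to be finite-dimensional, hence a simple quotient of an appropriate $\Gamma(\lambda,V)$ with $V$ a simple $\gl(0,n)$-module). For $m>1$ I would invoke the chain of lemmas: first Lemma \ref{lem5.5} to change the $\Z$-basis of $\Z^m$ and replace $\lambda$ so that $\lambda\ne 0$ and there is a homogeneous $w\in M_\lambda$ killed by all $W_{e_i}$; then Lemmas \ref{lem5.6}--\ref{lem5.9} to control the support (support bounded in the positive cone, local truncation of the action, monotonicity of non-support along $\Z_+^m$); and finally Lemma \ref{lem5.10}, which produces a subgroup $G\subseteq\Z^m$ and $0\ne\beta\in\Z^m$ with $\Z^m=G\oplus\Z\beta$ and a simple cuspidal $W_G$-module $X$ such that $M\cong L(G,\beta,X)$. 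Applying a suitable $B\in{\rm GL}_m(\Z)$ one may normalize $G=\Z e_1+\cdots+\Z e_{m-1}$ and $\beta=e_m$, at the cost of twisting by $B$, giving the form $L(G,e_m,X)^B$.

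It then remains to identify the cuspidal $W_G$-module $X$. Here $W_G=(W_{m,n})_G\cong \tilde W_{m-1,n}$-type; more precisely $W_G=W_{m,n}'$ in the notation of Section 4 (with the roles of the last two coordinates adjusted by $B$), so Theorem \ref{the4.4} applies: any nontrivial simple cuspidal $W_G$-module is a simple quotient of a tensor module $\Gamma(\lambda,V)$ for a finite-dimensional simple $\gl(m-1,n)$-module $V$ and some $\lambda\in\bC^m$. Plugging this into $L(G,e_m,X)^B$ yields precisely case (2). Conversely one should remark that the modules listed in (1) and (2) do have finite-dimensional weight spaces --- for (1) this is built into the tensor-module construction (\ref{eq3.9}), and for (2) it is Lemma \ref{lem5.4} (via Lemma \ref{exp}), so the statement is a genuine classification.

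The main obstacle is the non-cuspidal case, and within it Lemma \ref{lem5.10}: the delicate point is the case analysis showing that ``$\{\lambda+k\beta+G\}\cap{\rm supp}(M)\ne\varnothing$ for all $k\in\N$'' is impossible, which requires choosing the new $\Z$-bases of $G$ and of $\Z^m$ carefully (using Lemma \ref{lem5.5}-type arguments applied to the slice module $M_{\lambda+k\beta+G}$) and then deriving a contradiction with the inclusion $(\lambda-\Z_+\beta+G)\setminus\{0\}\subseteq{\rm supp}(M)$ by pushing along $r(\beta_1+\cdots+\beta_m)$. A secondary subtlety is keeping track of parity-change and the ${\rm GL}_m(\Z)$-twist so that the final normal form really is $L(G,e_m,X)^B$ with $G$ and $\beta$ in the stated standard position; this is bookkeeping rather than a conceptual difficulty, but it must be done consistently with the conventions fixed just before Lemma \ref{lem5.4}.
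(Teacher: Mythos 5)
Your proposal is correct and follows the same route as the paper: cuspidal modules are handled by Theorem \ref{the3.11}, and the non-cuspidal case reduces via Lemma \ref{lem5.3} (for $m=1$) and Lemmas \ref{lem5.5}--\ref{lem5.10} (for $m>1$) to $M\cong L(G,e_m,X)^B$ with $X$ simple cuspidal over $W_G$, after which Theorem \ref{the4.4} identifies $X$ as a simple quotient of $\Gamma(\lambda,V)$ for a finite-dimensional simple $\gl(m-1,n)$ module $V$. The only addition beyond the paper's proof is your closing remark that the listed modules do have finite-dimensional weight spaces, which is a reasonable supplement but not needed for the stated direction.
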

\begin{proof} Let $M$ be any simple weight $W=W_{m,n}$ module with finite-dimensional weight spaces. If $M$ is cuspdial, we have (1)  from Theorem \ref{the3.11}.  Now suppose that $M$ is not cuspidal, then from Lemma \ref{lem5.3} and Lemma \ref{lem5.10}, we have $M\cong L(G,e_m, X)^B$, where $G=\Z e_1+\cdots+\Z e_{m-1}$, $B\in {\rm GL}_m(\Z)$, and $X$ is a simple cuspidal $W_G$ module.  From Theorem \ref{the4.4}, such $X$  is  a simple quotient of the $W_G$ module $\Gamma(\lambda, V)$ for some  $\lambda\in \bC^m$ and  a finite-dimensional simple $\gl(m-1,n)$ module $V$. We therefore have (2) in this case.  \end{proof}

{\bf Ackowledgement.} {This work is partially supported by NSF of China (Grant 11471233, 11771122, 11971440)}.


\vspace{4mm}

 \noindent R.L\"u: Department of Mathematics, Soochow University, Suzhou, P. R. China.  Email: rlu@suda.edu.cn, corresponding author.

\vspace{0.2cm}\noindent Y. Xue.: Department of Mathematics, Soochow University, Suzhou, P. R. China.  Email: yhxue00@stu.suda.edu.cn

\end{document}